\def\cprime{$'$} \def\polhk#1{\setbox0=\hbox{#1}{\ooalign{\hidewidth
 \lower1.5ex\hbox{`}\hidewidth\crcr\unhbox0}}}
\newtheorem{prop}{Proposition}
\newtheorem{thm}[prop]{Theorem}
\newtheorem{cor}[prop]{Corollary}
\newtheorem{lem}[prop]{Lemma}
\newtheorem{defn}{Definition}[section]
\newtheorem{ques}{Question}
\theoremstyle{definition}
\theoremstyle{remark}
\newtheorem{rem}[prop]{Remark}
\numberwithin{prop}{section} 
\numberwithin{claim}{prop}
\numberwithin{equation}{section}
\newcommand{\kernel}{\mathrm{ker}}
\newcommand{\coker}{\mathrm{coker}}
\newcommand{\ccd}{\mathrm{cd}}
\newcommand{\nclose}[1]{\ensuremath{\langle\!\langle#1\rangle\!\rangle}}
\DeclareMathOperator{\asdim}{\mathrm{asdim}}
\newcommand{\rst}{\mathrm{res}}
\newcommand{\FP}{\textup{FP}}
\newcommand{\ca}[1]{\mathcal{#1}}
\newcommand{\Z}{\mathbb{Z}}
\newcommand{\Q}{\mathbb{Q}}
\newcommand{\F}{\mathbb{F}}
\newcommand{\N}{\mathbb{N}}
\newcommand{\QG}{\Q[G]}
\newcommand{\QH}{\Q[H]}
\newcommand{\dis}{\mathbf{dis}}
\newcommand{\Mod}{\mathbf{mod}}
\newcommand{\QGmod}{{}_{\QG}\Mod}
\newcommand{\QGdis}{{}_{\QG}\dis}
\newcommand{\QHdis}{{}_{\QH}\dis}
\newcommand{\argu}{\hbox to 7truept{\hrulefill}}
\DeclarePairedDelimiter{\abs}{\lvert}{\rvert}
\DeclareMathOperator{\aut}{Aut}
\DeclareMathOperator{\dist}{dist}
\newcommand{\norm}[1]{\left\lVert#1\right\rVert}
\title[]{Subgroups, Hyperbolicity and Cohomological Dimension for Totally Disconnected Locally Compact Groups}
\author{S.~Arora}
\author{I.~Castellano}
\author{G.~Corob Cook}
\author{E.~Mart\'inez-Pedroza}
\email{sarora17@mun.ca}
\email{ilaria.castellano88@gmail.com}
\email{gcorobcook@gmail.com}
\email{eduardo.martinez@mun.ca}
\keywords{hyperbolic groups, totally disconnected locally compact groups, homological finiteness, cohomological dimension 2, compactly presented}
\begin{document}

\begin{abstract} 
This article is part of the program of studying large-scale geometric properties of totally disconnected locally compact groups, TDLC-groups, by analogy with the theory for discrete groups. We provide a characterization of hyperbolic TDLC-groups, in terms of homological isoperimetric inequalities. This characterization is used to prove the main result of the article: for hyperbolic TDLC-groups with rational discrete cohomological dimension $\leq 2$, hyperbolicity is inherited by compactly presented closed subgroups. As a consequence, every compactly presented closed subgroup of the automorphism group $\aut(X)$ of a negatively curved locally finite $2$-dimensional building $X$ is a hyperbolic TDLC-group, whenever $\aut(X)$ acts with finitely many orbits on $X$. Examples where this result applies include hyperbolic Bourdon's buildings. 

We revisit the construction of small cancellation quotients of amalgamated free products, and verify that it provides examples of hyperbolic TDLC-groups of rational discrete cohomological dimension $2$ when applied to amalgamated products of profinite groups over open subgroups. 

We raise the question of whether our main result can be extended to locally compact hyperbolic groups if rational discrete cohomological dimension is replaced by asymptotic dimension. We prove that this is the case for discrete groups and sketch an argument for TDLC-groups.
\end{abstract}

	\maketitle 
\section{Introduction}
A locally compact group $G$ is said to be {\it totally disconnected} if the identity is its own connected component. For an arbitrary locally compact group, the connected component of the identity is always a closed normal subgroup with a totally disconnected quotient. Therefore, in principle, the study of locally compact groups can be reduced to the study of the two subclasses formed by connected locally compact groups and totally disconnected locally compact groups.  By the celebrated solution to Hilbert's fifth problem, connected locally compact groups are inverse limits of Lie groups.  However, such a thorough understanding has not been achieved for the totally disconnected counterpart.

Hereafter, we use TDLC-group as a shorthand for totally disconnected locally compact group. The class of TDLC-groups has been a topic of interest in the last three decades since the work of G. Willis~\cite{Wi94}, and of M. Burger and S. Mozes~\cite{BuMo97}.

Large-scale properties of a TDLC-group $G$ can be addressed by investigating a family of quasi-isometric locally finite connected graphs which are known as {Cayley-Abels graphs of $G$}; see \S~\ref{defn:CAgraph} for the definition and further details. Therefore, the theory of TDLC-groups becomes amenable to many tools from geometric group theory (see ~\cite{Bau, BSW08, Mo02} for example) and the notion of hyperbolic group carries over to the realm of TDLC-groups.

The motivation for this work is to gain a better understanding of the interaction between the geometric properties of the TDLC-group $G$ and its cohomological properties by analogy with the discrete case. An investigation of this type was initiated in~\cite{CaWe16, Ca18} where the {\em rational discrete cohomology} for TDLC-groups has been introduced and the authors have shown that many well-known properties that hold for discrete groups can be transferred to the context of TDLC-groups (in some cases after substantial work). 

For a TDLC-group $G$, the representation theory used in~\cite{CaWe16} leans on the notion of {\em discrete $\QG$-module}, that is a $\QG$-module $M$ such that the action $G\times M\to M$ is continuous when $M$ carries the discrete topology. In the case that $G$ is discrete, any $\QG$-module is discrete. Because of the divisibility of $\Q$, the abelian category $\QGdis$ of discrete $\QG$-modules has enough projectives. As a consequence, the notions of {\em rational discrete cohomological dimension}, denoted by $\ccd_\Q(G)$, and {\it type $\FP_n$} can be introduced for every TDLC-group $G$ in the category $\QGdis$ (see \S\ref{ss:cfc} for the necessary background). This opens up the possibility of investigating TDLC-groups by imposing some cohomological finiteness conditions.

The main result of this article is a subgroup theorem for hyperbolic TDLC-groups of rational discrete cohomological dimension at most $2$.
\begin{thm}\label{thm:gersten1}
Let $G$ be a hyperbolic TDLC-group with $\ccd_\Q(G) \leq 2 $. Every compactly presented closed subgroup $H$ of $G$ is hyperbolic.
\end{thm}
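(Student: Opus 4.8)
The plan is to mimic Gersten's classical argument for discrete hyperbolic groups of cohomological dimension $2$, adapted to the TDLC setting via the homological-isoperimetric characterization of hyperbolicity promised in the abstract.

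The plan is to deduce Theorem~\ref{thm:gersten1} from the homological characterization of hyperbolicity announced in the abstract, by transferring a linear homological isoperimetric inequality from $G$ to $H$. Concretely, the characterization should read: a compactly presented TDLC-group is hyperbolic if and only if its rational discrete homological filling function $\FV_\Q$ grows at most linearly. Granting this in both directions, the theorem splits into three moves. First, since $G$ is hyperbolic, its $\FV_\Q$ is linear. Second---the crux---I would show that $\ccd_\Q(G)\le 2$ forces the $\FV_\Q$ of the compactly presented subgroup $H$ to be linear as well. Third, $H$ is compactly presented by hypothesis, so the characterization applies to $H$ and returns hyperbolicity.

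The mechanism for the second move rests on a rigidity coming from dimension two. Because $\ccd_\Q(G)\le 2$ and $G$, being hyperbolic, is in particular of type $\FP_2$, the trivial module $\Q$ admits a length-two resolution by finitely generated projective discrete $\QG$-modules,
\begin{equation*}
0\to C_2\xrightarrow{\der_2} C_1\xrightarrow{\der_1} C_0\xrightarrow{\eps}\Q\to 0 .
\end{equation*}
The essential consequence is that $\der_2$ is \emph{injective}: the module of $2$-cycles vanishes, so a $1$-cycle that bounds has a \emph{unique} filling $2$-chain. Restricting this complex along $H\hookrightarrow G$ keeps it exact (exactness is a statement about the underlying $\Q$-modules) and preserves projectivity of each term, so it is a length-two $\QH$-resolution of $\Q$ computing the rational discrete homology of $H$, again with injective $\der_2$ and hence rigid fillings.

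With rigidity in hand the transfer proceeds as follows. Take a $1$-cycle $z$ in an $H$-cocompact model---available since $H$ is compactly presented---with $\|z\|_1\le n$. Since $H$ is compactly presented, $z$ bounds a $2$-chain $c'$ in the $H$-model; pushing both $z$ and $c'$ into the $G$-model by an $H$-equivariant Lipschitz comparison map and filling $z$ there using hyperbolicity of $G$ produces a chain $c$ with $\der_2 c=z$ and $\|c\|_1\le Kn$. By injectivity of $\der_2$ this $c$ is the \emph{only} chain with boundary $z$, so it must coincide with the image of the $H$-filling $c'$; thus the cheap $G$-filling is itself the image of an $H$-chain, and its bound $\|c\|_1\le Kn$ descends to a linear estimate for $\FV_\Q$ of $H$. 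Feeding this into the characterization completes the argument; the hypotheses that $H$ is closed and compactly presented are precisely what make $H$ a TDLC-group carrying a well-defined, finite-valued filling function to which the characterization applies.

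I expect the main obstacle to be the comparison of the two equivariant $\ell_1$-norms across the inclusion, that is, showing that the unique filling can be read off in the $H$-cocompact model without loss of linear control. A $G$-finitely generated resolution restricts to an $H$-resolution of \emph{infinite} type, so the $G$- and $H$-filling functions cannot be identified naively; one must pass between the restricted resolution and a genuine $H$-finite-type resolution (guaranteed by $H$ being $\FP_2$) through a \emph{bounded} chain equivalence, and verify that the unique filling stays supported in a bounded neighbourhood of the coset of $H$. Keeping the filling near $H$ is where the thinness of geodesic triangles in $G$ must be combined with the dimension-two rigidity, and this support-and-norm comparison is the delicate technical heart of the proof.
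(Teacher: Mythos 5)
Your high-level skeleton matches the paper exactly: hyperbolicity of $G$ gives a weak (homological) linear isoperimetric inequality via the characterization theorem, the dimension hypothesis is used to push that inequality down to $H$, and the characterization applied to the compactly presented group $H$ finishes. The paper's proof is literally these three lines, with all content delegated to the transfer step (its Theorem~\ref{thm:mainineq1}). The problem is that your mechanism for the transfer step has a genuine gap, and you half-admit it yourself. Your uniqueness-of-fillings observation (injectivity of $\der_2$ when $\ccd_\Q(G)\le 2$) shows only that the image under a comparison chain map $\phi_\bullet$ of the $H$-filling $c'$ coincides with the cheap $G$-filling, hence $\|\phi_2(c')\|_1\le K\|z\|_1$ \emph{in the $G$-model}. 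What the isoperimetric inequality for $H$ requires is a bound on $\|c'\|_1$ \emph{in the $H$-model}, and that needs the reverse inequality $\|c'\|_1\preceq\|\phi_2(c')\|_1$, i.e.\ undistortion of $\phi_2$. This is not automatic: an $H$-equivariant map from a finite-type $\QH$-resolution into the restricted $G$-resolution can shrink $\ell_1$-norms arbitrarily, and the boundedness criterion (morphisms between \emph{finitely generated} discrete modules are bounded) fails here because the restricted $G$-resolution is of infinite type over $\QH$. For the same reason your proposed fix via a ``bounded chain equivalence'' between the restricted resolution and a genuine finite-type $H$-resolution does not get off the ground: chain homotopy equivalences between infinite-type and finite-type resolutions need not be bounded, and the appeal to thin triangles and fillings ``supported near the coset of $H$'' is a heuristic with no argument behind it.

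The paper closes exactly this gap algebraically, with no uniqueness and no geometry. Its Lemma~\ref{lem:res} inductively builds a finite-type proper permutation resolution of $\Q$ in $\QHdis$ whose modules $\Q[\Omega_i]$ sit inside the restricted $G$-modules $\Q[\Sigma_i]$ as $\QH$-spans of genuine $H$-subsets $\Omega_i\subseteq\Sigma_i$, so the two $\ell_1$-norms agree on the nose rather than merely being compared through a map; crucially, the construction also arranges that $\coker\bigl(\ker(\delta_n)\to\ker(\partial_n)\bigr)$ is projective. Then $\ccd_\Q(G)\le 2$ is used not for injectivity of $\der_2$ but to make $\ker(\partial_1)$ a finitely generated projective $\QG$-module, whence (using that $H$ is $\FP_2$, which follows from compact presentability) $\ker(\delta_1)$ is a finitely generated \emph{direct summand} of $\ker(\partial_1)$ in $\QHdis$; Proposition~\ref{Prop:normHG} then says direct summands are undistorted, and the three norm equivalences concatenate. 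If you want to repair your argument, the direct-summand/undistortion statement is the ingredient you must produce; the filling-uniqueness trick can be discarded entirely.
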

This theorem generalizes the following two results for discrete groups:
\begin{itemize}
\item Finitely presented subgroups of hyperbolic groups of integral cohomological dimension less than or equal to two are hyperbolic. This is a result of Gersten~\cite[Theorem 5.4]{Ge96} which can be recovered as a consequence of the inequality $\ccd_\Q(\argu)\leq\ccd_\Z(\argu)$.

\item Finitely presented subgroups of hyperbolic groups of rational cohomological dimension less than or equal to two are hyperbolic. This is a recent result in~\cite{AMP19} which is the analogue of Theorem~\ref{thm:gersten1} in the discrete case.
\end{itemize}

We remark that Brady constructed an example of a discrete hyperbolic group of integral cohomological dimension three that contains a finitely presented subgroup that is not hyperbolic~\cite{Br99}. Hence the the dimensional bound on the results stated above is sharp. 

In the discrete case, a class of hyperbolic groups of rational cohomological dimension two is given by groups admitting finite presentations with certain small cancellation conditions. This is also the case for TDLC-groups for small cancelation quotiens of amalgamated products of profinite groups. We refer the reader to Section~\ref{sec:SmallCancellation} for details on the following result.

\begin{thm}
Let $A \ast_C B$ be the amalgamated free product of the profinite groups $A,B$ over a common open subgroup $C$. Let $R$ be a finite symmetrized subset of $A\ast_C B$ that satisfies the $C'(1/12)$ small cancellation condition. Then the quotient $G= (A \ast_C B)/\nclose{R}$ is a hyperbolic TDLC-group with  $\ccd_\Q (G) \leq 2$.
 \end{thm}

In the framework of discrete groups, it is a result of Gersten that type $\FP_2$ (over $\Z$) subgroups of hyperbolic groups of integral cohomological dimension at most two are hyperbolic~\cite[Theorem 5.4]{Ge96}. We raise the following question:
\begin{ques} 
Does Theorem~\ref{thm:gersten1} remain true if $H$ is of type $\FP_2$ but not compactly presented?
\end{ques}

It is well known that if $X$ is a locally finite simplicial complex then the group of simplicial automorphisms $\aut(X)$ endowed with the compact open topology is a TDLC-group~\cite[Theorem~2.1]{Cameron96}. If, in addition, $X$ admits a $CAT(-1)$ metric and $\aut(X)$ acts with finitely many orbits on $X$, then $\aut(X)$ is a hyperbolic TDLC-group with $\ccd_\Q(\aut(X)) \leq \dim(X) $. 
\begin{cor}\label{cor:CAT}
Let $X$ be a locally finite $2$-dimensional simplicial $CAT(-1)$-complex. If $\aut(X)$ acts with finitely many orbits on $X$, then every compactly presented closed subgroup of $\aut(X)$ is a hyperbolic TDLC-group.
\end{cor}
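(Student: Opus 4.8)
The plan is to deduce Corollary~\ref{cor:CAT} from the main result Theorem~\ref{thm:gersten1} by verifying that $\aut(X)$ itself satisfies the hypotheses of that theorem. Concretely, I must establish three facts about $G := \aut(X)$, where $X$ is a locally finite $2$-dimensional simplicial $CAT(-1)$-complex on which $G$ acts with finitely many orbits: that $G$ is a TDLC-group, that $G$ is hyperbolic as a TDLC-group, and that $\ccd_\Q(G) \leq 2$. Once these are in place, the conclusion that every compactly presented closed subgroup of $G$ is a hyperbolic TDLC-group is immediate from Theorem~\ref{thm:gersten1}.

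First I would record that $G = \aut(X)$, endowed with the compact-open topology, is a TDLC-group; this is exactly \cite[Theorem~2.1]{Cameron96}, cited in the paragraph preceding the corollary, using that $X$ is locally finite. Next I would argue hyperbolicity: since $G$ acts on $X$ with finitely many orbits of cells and $X$ is locally finite, the action is cocompact, and the point stabilizers are compact open (being stabilizers of vertices in the compact-open topology). A $CAT(-1)$ metric on $X$ is in particular Gromov-hyperbolic as a geodesic metric space, and a cocompact proper action of $G$ on such a space by isometries identifies $X$ (up to quasi-isometry) with a Cayley--Abels graph of $G$; hence $G$ is a hyperbolic TDLC-group. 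These two points are exactly the assertions made in the sentence ``If, in addition, $X$ admits a $CAT(-1)$ metric and $\aut(X)$ acts with finitely many orbits on $X$, then $\aut(X)$ is a hyperbolic TDLC-group with $\ccd_\Q(\aut(X)) \leq \dim(X)$'' appearing just before the statement, so in the write-up I would cite or reprove that claim rather than rederive it from scratch.

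The cohomological bound is the substantive input. With $\dim(X) = 2$, the cited inequality $\ccd_\Q(\aut(X)) \leq \dim(X)$ gives $\ccd_\Q(G) \leq 2$. The point here is that the contractibility of the $CAT(-1)$ space $X$ (it is uniquely geodesic, hence contractible) together with the cocompact action by a group with compact open cell-stabilizers furnishes a finite-length resolution of the trivial module in $\QGdis$ built from the discrete permutation modules on the cells of $X$; since $X$ has dimension $2$, this resolution has length $2$, bounding $\ccd_\Q(G)$ by $2$. I would present this as an application of the dimension bound quoted in the excerpt.

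Having verified all three hypotheses, I would simply invoke Theorem~\ref{thm:gersten1} with this $G$: every compactly presented closed subgroup $H$ of $G = \aut(X)$ is hyperbolic, which is the assertion of the corollary. The main obstacle, and the only step requiring genuine content beyond citing the preceding sentence, is the cohomological dimension bound $\ccd_\Q(\aut(X)) \leq \dim(X)$; the TDLC and hyperbolicity claims are essentially bookkeeping relying on the compact-open topology, local finiteness, cocompactness, and the elementary fact that $CAT(-1)$ implies Gromov-hyperbolic. Since the excerpt states the bound $\ccd_\Q(\aut(X)) \leq \dim(X)$ as an established fact in the sentence directly before the corollary, in practice the entire proof reduces to specializing $\dim(X) = 2$ and applying the main theorem, so the corollary is essentially immediate from the results assembled above.
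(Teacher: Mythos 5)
Your proposal is correct and matches the paper's (implicit) argument exactly: the paper offers no separate proof of the corollary, because the sentence preceding it establishes that $\aut(X)$ is a hyperbolic TDLC-group with $\ccd_\Q(\aut(X)) \leq \dim(X) = 2$ (via the contractible cocompact $G$-complex $X$ with compact open stabilizers and the quasi-isometry with a Cayley--Abels graph), after which Theorem~\ref{thm:gersten1} yields the conclusion. Your verification of the three hypotheses, including the sketch of the cohomological dimension bound from the cellular chain complex of the contractible complex $X$, is precisely the intended reasoning.
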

A discrete version of Corollary~\ref{cor:CAT} was proved in~\cite[Corollary 1.5]{HM14} using combinatorial techniques. There are different sources of complexes $X$ satisfying the hypothesis of Corollary~\ref{cor:CAT} and such that $\aut(X)$ is a non-discrete TDLC-group. For example:
\begin{itemize}
\item Bourdon's building $I_{p,q}$, $p\geq 5$ and $q\geq 3$, is the unique simply connected polyhedral $2$-complex such that all $2$-cells are right-angled hyperbolic $p$-gons and the link of each vertex is the complete bipartite graph $K_{q,q}$. These complexes were introduced by Bourdon~\cite{Bo97}. The natural metric on $I_{p,q}$ is $CAT(-1)$ and $\aut(I_{p,q})$ is non-discrete. 
\item For an integer $k$ and a finite graph $L$, a $(k, L)$-complex is a simply connected $2$-dimensional polyhedral complex such that all $2$-dimensional faces are $k$-gons and the link of every vertex is isomorphic to the graph $L$. A result of \'{S}wi\polhk{a}tkowski~\cite[Main Theorem (1)]{Sw98} provides sufficient conditions on the graph $L$ guaranteeing that if $k\geq 4$ then $\aut(X)$ is a non-discrete group for any $(k,L)$-complex $X$. It is a consequence of Gromov's link condition, that a $(k,L)$-complex admits a $CAT(-1)$-structure for any $k$ sufficiently large. 
\end{itemize}
In order to prove Theorem~\ref{thm:gersten1}, we follow ideas from Gersten~\cite{Ge96}. We introduce the concept of {\it weak $n$-dimensional linear isoperimetric inequality} for TDLC-groups, which is a homological analogue in higher dimensions of linear isoperimetric inequalities. Profinite groups are characterized as TDLC-groups satisfying the weak $0$-dimensional linear isoperimetric inequality: see~Section~\ref{sec:weakiso}. The weak $1$-dimensional linear isoperimetric inequality is called from here on the \emph{weak linear isoperimetric inequality}. The following result generalizes for TDLC-groups a well-known characterization of hyperbolicity in the discrete case~\cite[Theorem 3.1]{Ge96}.
 \begin{thm}\label{thm:characterization}
A compactly generated TDLC-group $G$ is hyperbolic if and only if $G$ is compactly presented and satisfies the weak linear isoperimetric inequality.
\end{thm}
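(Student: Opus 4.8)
The plan is to prove the equivalence by establishing both implications through the combinatorial geometry of a Cayley--Abels graph, transferred to the homological setting via the chain complex of discrete $\Q G$-modules. Fix a Cayley--Abels graph $X$ for the compactly generated TDLC-group $G$; its $0$- and $1$-skeleta give rise to discrete $\Q G$-modules of chains, and "compactly presented" should correspond to a finite generation condition on the module of $2$-cells in an associated complex. The weak linear isoperimetric inequality asserts that every $1$-cycle bounds a $2$-chain whose size is controlled linearly by the size of the cycle; this is the homological shadow of the usual linear isoperimetric (Dehn) function. The strategy is to run Gersten's discrete argument~\cite[Theorem 3.1]{Ge96}, replacing combinatorial van Kampen diagrams by the chain-level bookkeeping that the category $\QGdis$ makes available.

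\textbf{Forward direction.} Assuming $G$ is hyperbolic, I would first recall that hyperbolicity of a TDLC-group is defined via hyperbolicity (in the Gromov sense) of a Cayley--Abels graph $X$, and that this already forces compact presentability, since a hyperbolic graph satisfies a linear combinatorial isoperimetric inequality and the relators of bounded length suffice to present $G$ compactly. The substantive point is to pass from the classical (set-theoretic) linear isoperimetric inequality for loops in $X$ to the \emph{weak} homological version for $1$-cycles. Here I would take an integral $1$-cycle, decompose it as a sum of embedded loops, apply the linear filling for each loop to obtain a $2$-chain, and then sum, tracking that the $G$-equivariance and local finiteness keep all the modules discrete and the norms additive up to a uniform multiplicative constant. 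The linearity of the fillings is preserved under this summation, yielding the weak linear isoperimetric inequality.

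\textbf{Reverse direction.} This is where I expect the \emph{main obstacle}. Assuming $G$ is compactly presented and satisfies the weak linear isoperimetric inequality, I must recover genuine Gromov hyperbolicity of $X$, i.e. thinness of geodesic triangles, from a purely \emph{homological} linear filling bound. The difficulty is that a homological (cycle-filling) inequality is a priori weaker than the geometric (loop-filling) Dehn inequality: cancellation in chains can make a $1$-cycle cheap to fill homologically even when the corresponding loop is expensive to fill combinatorially. The plan is to follow Gersten's reduction: a linear homological filling inequality implies a linear \emph{homotopical} Dehn function via a chain-to-diagram argument that uses compact presentability to bound the number of $2$-cells needed, and then invoke the standard fact that a linear Dehn function characterizes hyperbolicity. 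The technical heart is showing that the discrete-module filling constant controls the area of van Kampen diagrams over the compact presentation; I would set this up by choosing a compact generating set and a compact set of relators realizing the presentation, build the associated $2$-complex with its $G$-action, and argue that the weak linear inequality bounds combinatorial area up to a constant depending only on the chosen data.

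\textbf{Assembly.} Finally I would combine the two directions, checking that the constants and the choice of Cayley--Abels graph can be made compatible (any two such graphs are quasi-isometric, so hyperbolicity and the isoperimetric inequalities are independent of the choice up to the relevant equivalences). Throughout, the care needed is to ensure every module that appears lies in $\QGdis$ and that all finiteness claims are phrased via the type $\FP_n$ machinery already available from~\cite{CaWe16,Ca18}, so that "size of a chain" is a well-defined, $G$-invariant quantity. I expect the forward direction to be essentially a transcription of the discrete case, with the reverse direction requiring the genuinely new input that links the homological filling norm to the combinatorial Dehn function in the locally compact setting.
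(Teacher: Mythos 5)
Your forward direction is essentially the paper's argument: hyperbolicity gives compact presentability via a Rips-complex model (Proposition~\ref{prop:F2model}), the $1$-skeleton is quasi-isometric to a Cayley--Abels graph by the TDLC {\v S}varc--Milnor lemma (Proposition~\ref{prop:svarc}), loops in a hyperbolic complex admit linear homological fillings, and one passes from loops to arbitrary rational $1$-cycles by decomposing integral cycles into cycles coming from combinatorial loops (this is exactly what Proposition~\ref{prop:hom=iso} does, using \cite[Lemma A.2]{Ge98}). That half is sound.

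The reverse direction, however, contains a genuine gap, and it is precisely at the point you yourself flag as the main obstacle. You propose to convert the weak (homological) linear isoperimetric inequality into a linear \emph{homotopical} Dehn function by a ``chain-to-diagram argument that uses compact presentability to bound the number of $2$-cells needed,'' and then to invoke the standard fact that a linear Dehn function implies hyperbolicity. No such direct conversion exists: a $2$-chain filling a cycle can have massive cancellation and negative coefficients, and there is no general procedure that turns it into a van Kampen diagram of comparable area --- indeed homological and homotopical Dehn functions are genuinely different in general \cite{ABDY13}. The only known bridge at the linear level is to prove hyperbolicity \emph{first} (after which both functions are linear), so your proposed reduction is circular: the lemma you need is equivalent to the theorem you are proving. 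The paper avoids this entirely by quoting Groves--Manning \cite[Lemma 2.29, Theorem 2.30]{GrMa08} (Proposition~\ref{thm:manning2} in the paper): for a simply connected $2$-complex whose $2$-cells have attaching maps of uniformly bounded length --- supplied here by the finitely many $G$-orbits of $2$-cells in a topological model of type $\F_2$ (Proposition~\ref{prop:F2model0}) --- a linear homological isoperimetric inequality for combinatorial loops already forces the $1$-skeleton to be Gromov hyperbolic. Their proof is a Gersten--Bowditch-style geometric argument on cycles, not a diagram conversion. To repair your proof you should replace the chain-to-diagram step by this result (or reprove it), and then finish, as the paper does, by translating the module-theoretic weak linear isoperimetric inequality into the loop-filling statement via Proposition~\ref{prop:hom=iso} and transporting hyperbolicity to the Cayley--Abels graphs via Proposition~\ref{prop:svarc}.
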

The property of satisfying the weak $n$-dimensional linear isoperimetric inequality is inherited by closed subgroups under some cohomological finiteness conditions.
\begin{thm}\label{thm:mainineq1}
Let $G$ be a TDLC-group of type $\FP_\infty$ with $\ccd_\Q(G) = n + 1$ that satisfies the weak $n$-dimensional linear isoperimetric inequality. Then every closed subgroup $H$ of $G$ of type $\FP_{n+1}$ satisfies the weak $n$-dimensional linear isoperimetric inequality.
\end{thm}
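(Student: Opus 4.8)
The plan is to transport the inequality from $G$ to $H$ by comparing two projective resolutions of the trivial module over $\QHdis$: the restriction of a finite-type resolution for $G$, and a finite-type resolution supplied by the hypothesis that $H$ is of type $\FP_{n+1}$. First I would use that $G$ is of type $\FP_\infty$ with $\ccd_\Q(G)=n+1$ to fix a resolution of $\Q$ by finitely generated projective discrete $\QG$-modules of length exactly $n+1$,
\[
0 \to P_{n+1} \xrightarrow{\der_{n+1}} P_n \to \cdots \to P_0 \to \Q \to 0,
\]
equipped with the $\ell^1$-norms coming from a choice of compact generating data. Since the resolution terminates in degree $n+1$, the top differential $\der_{n+1}$ is injective with $\image\der_{n+1}=\kernel\der_n$, and in these terms the weak $n$-dimensional linear isoperimetric inequality for $G$ asserts precisely that $\der_{n+1}$ is bounded below: there is a constant $K$ with $\norm{w}\le K\norm{\der_{n+1}w}$ for all $w\in P_{n+1}$.

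Restriction $\QGdis\to\QHdis$ is exact and carries finitely generated projectives to projectives, via the decomposition of each $G/U$ into $H$-orbits with compact-open stabilizers, so $P_\bullet|_H\to\Q$ is again a length-$(n+1)$ projective resolution over $\QH$; in particular $\ccd_\Q(H)\le n+1$. Because $\norm{w}\le K\norm{\der_{n+1}w}$ concerns only individual elements and their norms, both unchanged by restriction, the resolution $P_\bullet|_H$ still satisfies the weak $n$-dimensional inequality with the same $K$; the difficulty is that the modules $P_i|_H$ are not finitely generated over $\QH$. Using $\FP_{n+1}$ together with $\ccd_\Q(H)\le n+1$, I would then choose a length-$(n+1)$ resolution $Q_\bullet\to\Q$ with every $Q_i$ finitely generated projective, and compare it with $P_\bullet|_H$ by $H$-equivariant chain maps $f\colon Q_\bullet\to P_\bullet|_H$ and $g\colon P_\bullet|_H\to Q_\bullet$ lifting $\iid_\Q$, with a homotopy $h$ realizing $gf\simeq\iid$. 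Given an $n$-cycle $z\in Q_n$, I would push it forward, fill $f(z)=\der_{n+1}w$ with $\norm{w}\le K\norm{f(z)}$, pull back, and correct by the homotopy, obtaining
\[
w' = g(w) - h_n(z), \qquad \der_{n+1}^{Q}w' = z,
\]
whence $\norm{w'}\le\big(\norm{g_{n+1}}\,K\,\norm{f_n}+\norm{h_n}\big)\norm{z}$. This is the desired inequality for $H$, provided the three operator norms are finite.

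The norms $\norm{f_n}$ and $\norm{h_n}$ are automatically finite, since $f_n$ and $h_n$ have the finitely generated source $Q_n$ and are thus determined by the finitely many images of a generating set. The \textbf{main obstacle} is $\norm{g_{n+1}}$: the map $g_{n+1}\colon P_{n+1}|_H\to Q_{n+1}$ has the infinitely generated source $P_{n+1}|_H$, and its boundedness amounts to a uniform bound on the $Q_{n+1}$-norms of the images of the infinitely many $H$-module generators. Tellingly, a naive degreewise construction of $g$ does not produce such a bound: the inductive step would require bounded fillings in the lower degrees of $Q_\bullet$, that is, the lower-dimensional isoperimetric inequalities for $H$, which are genuine conditions (the $0$-dimensional one characterizes profiniteness) and are \emph{not} among our hypotheses.

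I therefore expect the crux of the proof to be the direct construction of a single uniformly bounded $H$-equivariant retraction in top degree, splitting the injection $f_{n+1}\colon Q_{n+1}\hookrightarrow P_{n+1}|_H$; given such a retraction $g_{n+1}$, the estimate above closes the argument without any control in lower degrees. The tool I would use is averaging against the normalized Haar measure of the compact-open stabilizers: working over $\Q$, one may replace an arbitrary local preimage of each generator by its stabilizer-average, which is equivariant and does not increase the $\ell^1$-norm, so that the norm of $g_{n+1}$ on every generator is controlled by a single constant extracted from the finite generating data of $Q_{n+1}$. Making this uniform bound precise — and thereby exploiting the compact-open stabilizers and the rational coefficients in an essential way, exactly as in the passage from Gersten's integral statement to its rational refinement — is the step on which the whole proof rests.
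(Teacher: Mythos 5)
Your overall strategy is the paper's: compare a finite-type resolution of $\Q$ in $\QHdis$ with the restriction of a finite-type resolution in $\QGdis$, observe that the weak inequality for $G$ survives restriction element-by-element, and reduce everything to producing a bounded $H$-equivariant retraction in top degree of a comparison map $f_{n+1}\colon Q_{n+1}\to P_{n+1}$. You locate the crux correctly, you are right that the naive degreewise construction of $g$ is circular, and you are even right that a single bounded retraction would suffice (both top differentials are injective, so fillings in top degree are unique and no lower-degree metric control is needed). The gap is that the step you yourself call the one ``on which the whole proof rests'' is not proved, and stabilizer averaging cannot prove it. Averaging a candidate value over the compact open stabilizer $H_\omega$ of a basis element $\omega$ makes it $H_\omega$-invariant without increasing its $\ell_1$-norm, so it settles equivariance one $H$-orbit at a time; but the identity $g_{n+1}\circ f_{n+1}=\iid_{Q_{n+1}}$ is a global system of linear constraints coupling the values of $g_{n+1}$ across infinitely many $H$-orbits, and nothing guarantees it is solvable at all. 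For an arbitrary choice of $Q_\bullet$ and of the comparison map $f$, the map $f_{n+1}$ need not be split injective --- it need not even be injective. Already for the discrete group $H=\Z$ with generator $t$ (so $n=0$, $\QH=\Q[t,t^{-1}]$, augmentation $\varepsilon$): take $P_\bullet\colon 0\to\QH\xrightarrow{t-1}\QH\xrightarrow{\varepsilon}\Q\to0$, take $Q_0=\QH\oplus\QH$ with $\varepsilon_Q(x,y)=\varepsilon(x)+\varepsilon(y)$ and $Q_1=\ker\varepsilon_Q$ (finitely generated, projective since $\ccd_\Q(\Z)=1$), and let $f_0(x,y)=x+y$, with $f_1$ the unique map satisfying $(t-1)f_1=f_0|_{Q_1}$. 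Then $f_1(z,-z)=0$ for every $z$, so no retraction of $f_{n+1}$ exists, bounded or not. Hence the bounded retraction cannot be conjured by averaging on top of arbitrary choices; its existence must be built into the choice of the resolutions and of $f$, and that construction is the actual content of the proof.

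That construction is what the paper's Lemma~\ref{lem:res} does, and it is worth seeing how it fills your gap. The $H$-resolution is built \emph{inside} the restricted $G$-resolution: each $\Omega_i$ is an $H$-subset of $\Sigma_i$, so $\Q[\Omega_i]$ is automatically an $H$-equivariant direct summand of $\Q[\Sigma_i]$, with complement spanned by the complementary $H$-set, and the $\ell_1$-norms agree on the nose; moreover, at each stage a splitting $\pi$ of $\ker(\delta_{i-1})\to\ker(\partial_{i-1})$ is used, together with type $\FP$ of $H$, to choose the next $H$-finite set of generators inside $\Sigma_i$, so that the condition ``$\coker(\ker\delta_i\to\ker\partial_i)$ is projective'' (equivalently, the inclusion of kernels splits in $\QHdis$) propagates up to degree $n$. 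Since $\ccd_\Q(G)=n+1$ and $G$ has type $\FP_{n+1}$, the module $\ker(\partial_n)$ is finitely generated projective over $\QG$; therefore $\ker(\delta_n)$ is a finitely generated direct summand, in $\QHdis$, of the restriction of a finitely generated projective $\QG$-module, and Proposition~\ref{Prop:normHG} then delivers exactly your bounded retraction: such a summand sits as a direct summand inside $\Q[\Sigma]$ for an $H$-finite subset $\Sigma$ of the permutation basis, where the ambient $\ell_1$-norm restricts exactly, so the splitting is bounded for free. Note finally that, while you are right that no lower-dimensional isoperimetric inequalities are needed, the paper's argument does run through all lower degrees: what must be carried through every degree is the algebraic splitting condition, obtained by construction --- not by an averaging argument performed in top degree alone.
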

The major part of the paper is devoted to the proof of Theorem~\ref{thm:mainineq1}.
Our strategy borrows ideas from~\cite{AMP19, Ge96, HM16}. Some remarks:
\begin{itemize}
\item In the case that $G$ is discrete, Theorem~\ref{thm:mainineq1} is a consequence of~\cite[Theorem 1.7]{AMP19}.
\item The arguments in~\cite{AMP19}, where the authors replace some topological techniques from~\cite{Ge96, HM16} with algebraic counterparts, carry over the TDLC case only under the stronger assumption that the subgroup $H$ is open, see Remark~\ref{rem:openvsclosed}.    
\item Currently, for TDLC-groups, there is no well studied notion of $n$-dimensional homological Dehn function as the definitions available in the discrete case, see for example~\cite{ABDY13, HM16}. In contrast to the arguments in~\cite{AMP19}, we avoid the use of these objects and provide a more straight forward argument.  
\end{itemize}
It is a simple verification that Theorem~\ref{thm:gersten1} follows by Theorems~\ref{thm:mainineq1} and~\ref{thm:characterization}.
\begin{proof}[Proof of the Theorem~\ref{thm:gersten1}]
Since $G$ is hyperbolic, Theorem~\ref{thm:characterization} implies that $G$ sati\-sfies the weak linear isoperimetric inequality. By Theorem~\ref{thm:mainineq1}, $H$ also satisfies the weak linear isoperimetric inequality. We can then apply Theorem~\ref{thm:characterization} again to conclude the proof.
\end{proof}

\subsection*{Locally compact hyperbolic groups}

The monograph~\cite{CoHa16} by de Cornulier and de la Harpe  laid out the foundations of the study of locally compact groups from the perspective of geometric group theory. In this context,  a locally compact group is hyperbolic if it has a continuous proper cocompact isometric action on some proper
geodesic hyperbolic metric space~\cite{CCMT15}; this generalizes the classical definition in the discrete case as well as the definition in the class of totally disconnected locally compact groups used in the present article. By analogy with the discrete case, the asymptotic dimension provides a quasi-isometry invariant of locally compact compactly generated groups. The question below  suggests a posible generalization of Theorem~\ref{thm:gersten1} for the larger class of locally compact hyperbolic groups.  

\begin{ques}\label{q:LChypAsdim}
Let $G$ be a locally compact hyperbolic group such that $\asdim G \leq 2$. Are compactly presented subgroups of $G$ hyperbolic?
\end{ques}

%A metric space $X$ is \emph{cobounded} if there is a bounded subset  such that its orbit under the isometry group of the space covers the space, and it is proper if closed balls are compact.  

We conclude the introduction of the article by verifying that the  the above question has a positive answer for discrete hyperbolic groups. The argument provides a blueprint to answer the question in the positive in class of hyperbolic TDLC-groups using Theorem~\ref{thm:gersten1}. 

\begin{thm}\label{thm:GerstenAsdim}
Let $G$ be a discrete hyperbolic group such that $\asdim G\leq 2$. Then every finitely presented subgroup of $G$ is hyperbolic.
\end{thm}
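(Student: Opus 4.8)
The plan is to reduce Theorem~\ref{thm:GerstenAsdim} to its cohomological counterpart for discrete groups, namely the result cited above from~\cite{AMP19} that finitely presented subgroups of a hyperbolic group of rational cohomological dimension at most two are themselves hyperbolic. Concretely, I would deduce the theorem from the single inequality
\[
\ccd_\Q(G)\ \le\ \asdim(G),
\]
valid for every discrete hyperbolic group $G$: granting this, the hypothesis $\asdim(G)\le 2$ yields $\ccd_\Q(G)\le 2$, and then~\cite{AMP19} applies directly to show that every finitely presented subgroup of $G$ is hyperbolic. Thus the whole argument is concentrated in establishing the displayed inequality, and the role of hyperbolicity is to make both of its sides computable from the Gromov boundary $\partial G$.

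To prove the inequality I would pass through the boundary. On the geometric side, the theorem of Buyalo and Lebedeva identifies the asymptotic dimension of a hyperbolic group with the topological dimension of its boundary, $\asdim(G)=\dim(\partial G)+1$. On the cohomological side I would use the Rips complex $P=P_d(G)$ for $d$ large: it is a contractible, finite-dimensional, locally finite complex on which $G$ acts properly and cocompactly, and its Bestvina--Mess compactification $\bar P=P\cup\partial G$ realises $\partial G$ as a $Z$-set. The associated duality identifies the compactly supported cohomology of $P$ with the reduced \v{C}ech cohomology of the boundary, $H^{n}_c(P;\Q)\cong\check H^{\,n-1}(\partial G;\Q)$, and since finite stabilizers have order invertible in $\Q$ this computes the rational cohomological dimension as
\[
\ccd_\Q(G)=1+\max\{\,n : \check H^{\,n}(\partial G;\Q)\neq 0\,\}.
\]
Because \v{C}ech cohomological dimension is bounded above by covering dimension, the right-hand side is at most $1+\dim(\partial G)=\asdim(G)$, which is exactly the required bound.

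The main obstacle is torsion. As it is not known whether every hyperbolic group is virtually torsion-free, I cannot simply replace $G$ by a torsion-free finite-index subgroup and invoke Bestvina--Mess in its original form; instead the duality must be run with rational coefficients directly on the non-free cocompact action on the Rips complex, using that finite stabilizers have invertible order so that the $\QG$-modules $\Q[G/F]$ occurring in the chain complex of $P$ are projective and hence contribute nothing to $\ccd_\Q$. Once the inequality $\ccd_\Q(G)\le\asdim(G)$ is secured in this torsion-sensitive form, Theorem~\ref{thm:GerstenAsdim} follows at once by feeding $\ccd_\Q(G)\le 2$ into~\cite{AMP19}. The same blueprint is meant to apply in the totally disconnected setting: one expects a corresponding bound $\ccd_\Q(G)\le\asdim(G)$ for hyperbolic TDLC-groups, after which Theorem~\ref{thm:gersten1} would replace~\cite{AMP19} and answer Question~\ref{q:LChypAsdim} in the affirmative.
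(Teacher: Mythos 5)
Your proposal is correct and follows essentially the same route as the paper: reduce to the result of~\cite{AMP19} via the inequality $\ccd_\Q G \le \asdim G$, obtained by combining Buyalo--Lebedeva ($\asdim G = \dim\partial_\infty G + 1$) with the Bestvina--Mess identification of $\ccd_\Q G$ with $\dim_\Q\partial_\infty G + 1$ and the bound $\dim_\Q \le \dim$. The only cosmetic difference is that you sketch the Bestvina--Mess duality (via the $Z$-set compactification of the Rips complex, with rational coefficients to handle torsion), whereas the paper cites~\cite[Corollary 1.4]{BM91} as a black box together with the finiteness of $\ccd_\Q G$ coming from Rips-complex models for proper actions.
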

\begin{proof}
The main result of~\cite{AMP19} states that if $\ccd_\Q G \leq 2$, then any finitely presented subgroup of $G$ is hyperbolic. Therefore it is enough to verify  the inequality \[ \ccd_\Q G  \leq \asdim G.\] 
This inequality relies on important work of Buyalo and Bedeva~\cite{BL90}  and Bestvina and Mess~\cite{BM91} as   explained below. 

It is a result of Buyalo and Lebedeva,~\cite[Theorem 6.4]{BL90}, that the asymptotic dimension of every cobounded, hyperbolic, proper, geodesic metric space $X$ equals the topological dimension of its boundary at infinity plus $1$,
\[ \asdim X =\dim \partial_\infty X +1 .\]

On the other hand, for a compact metrizable space $Y$, there is a notion of cohomological dimension $\dim_R Y$ over a ring $R$. It is known that if $\dim Y < \infty$  then 
\[\dim_\Q Y \leq \dim_\Z Y = \dim Y ,\]
see~\cite{BM91} for definitions and references. 

Let  $\partial_\infty G$ denote the Gromov boundary of $G$. Recall that $\partial_\infty G$ is a compact metrizable space with finite topological dimension, see for example~\cite{KB02}. It follows that 
\[ \dim_\Q \partial_\infty G \leq \dim \partial_\infty G.\]
The work of Bestvina and Mess~\cite[Corollary 1.4]{BM91} implies that if $\ccd_\Q G<\infty$ then 
\begin{equation}\label{eq:BestvinaMess} \ccd_\Q G = \dim_\Q \partial_\infty G + 1. \end{equation}
Since discrete hyperbolic groups admit finite dimensional models for the universal space for proper actions (Rips complexes with large parameter, see~\cite{MS02} or~\cite{HOP14}), it follows that $\ccd_\Q G<\infty$.  Therefore
$ \ccd_\Q G  \leq \asdim G.$
\end{proof}
\begin{rem}
We expect a positive answer to Question~\ref{q:LChypAsdim} for hyperbolic TDLC-groups. Indeed, to obtain a positive answer it is enough to verify the following statement generalizing work of Bestvina and Mess~\cite[Corollary 1.4]{BM91}:
\begin{itemize}[leftmargin=*]
\item[-] Let $G$ be a hyperbolic TDLC-group.  If $\ccd_\Q G<\infty$ then 
$\ccd_\Q G = \dim_\Q \partial_\infty G + 1$,
where $\ccd_\Q G$ is the rational discrete cohomological dimension.
 \end{itemize}
Then the proof of Theorem~\ref{thm:GerstenAsdim} works in the TDLC case by using  Theorem~\ref{thm:gersten1} and that hyperbolic TDLC-groups have finite rational discrete cohomological dimension, see Proposition~\ref{prop:F2model}. An attempt to generalize the work of Bestvina and Mess in~\cite{BM91} for TDLC-groups is currently work in progress by the second author, F.W. Pasini and T. Weigel. In the generality of locally compact groups, the authors are not aware of a cohomology theory for locally compact groups that allow to persuade the techniques of this article.
\end{rem}

\subsection*{Organization}
Preliminary definitions regarding TDLC-groups and rational discrete modules are given in Section~\ref{sec:prilim}. Then Section~\ref{sec:CayleyAbels} consists of definitions and some preliminary results on Cayley-Abels graphs, compact presentability and hyperbolicity for TDLC-groups. Section~\ref{sec:weakiso} introduces the weak $n$-dimensional linear isoperimetric inequality. Section~\ref{sec:proof1} is devoted to the proof of Theorem~\ref{thm:mainineq1}. Finally, Section~\ref{sec:proof2} relates hyperbolicity and the weak linear isoperimetric inequality and contains the proof of Theorem~\ref{thm:characterization}.

\subsection*{Acknowledgements}
The authors thank David Futer for consultation on the theory of hyperbolic buildings. I.C. and E.M.P. thank the organisers of the satellite meetings \emph{Geometric Group Theory in Campinas} and \emph{Group Theory, Cabo Frio, Rio de Janeiro} of the 2018 International Congress of Mathematicians in Brazil, where the results of this article were first envisioned and initial progress was made. I.C. was partially supported by EPSRC grant EP/N007328/1 and the FA project ``Strutture Algebriche'' ATE-2016-0045. G.C.C. was supported by ERC grant 336983 and Basque government grant IT974-16. E.M.P. acknowledges funding by the Natural Sciences and Engineering Research Council of Canada, NSERC.

\section{TDLC-groups and rational discrete $G$-modules}\label{sec:prilim}
Throughout this section $G$ always denotes a TDLC-group.
Note that a TDLC-group is Hausdorff. Discrete groups are TDLC-groups. Profinite groups are precisely compact TDLC-groups~\cite[Proposition 0]{Se02}. 
A fundamental result about the structure of TDLC-groups is known as van Dantzig's Theorem:
\begin{thm}[van Dantzig's Theorem, \cite{Vd36}] The family of all compact open subgroups of a TDLC-group $G$ forms a neighbourhood system of the identity element. 
\end{thm}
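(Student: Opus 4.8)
The plan is to argue in two stages: first, to produce inside an arbitrary neighbourhood of the identity a compact open \emph{set} containing $e$; and second, to upgrade such a set to a compact open \emph{subgroup}. So I would fix an arbitrary neighbourhood $N$ of $e$ and aim to construct a compact open subgroup $H$ with $e\in H\subseteq N$; since $N$ is arbitrary, this yields exactly the assertion that the compact open subgroups form a fundamental system of neighbourhoods of the identity.

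For the first stage, which I expect to be the main obstacle, I would use local compactness to choose a compact neighbourhood $C$ of $e$, and shrink the target to an open set $W$ with $e\in W\subseteq \operatorname{int}(C)\cap N$. The subspace $C$ is compact, Hausdorff and totally disconnected, so the connected component of $e$ in $C$ is $\{e\}$. The essential topological input is that in a compact Hausdorff space connected components coincide with quasi-components: the intersection of all subsets of $C$ that are clopen in $C$ and contain $e$ is exactly $\{e\}$. Hence for each $x\in C\setminus W$ there is a clopen-in-$C$ set $A_x\ni e$ with $x\notin A_x$. The sets $C\setminus A_x$ form an open cover of the compact set $C\setminus W$, and extracting a finite subcover $C\setminus A_{x_1},\dots,C\setminus A_{x_n}$ yields $A:=A_{x_1}\cap\cdots\cap A_{x_n}$, which is clopen in $C$, contains $e$, and satisfies $A\subseteq W$. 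Being closed in the compact set $C$, the set $A$ is compact; being open in $C$ with $A\subseteq W\subseteq\operatorname{int}(C)$, we may write $A=O\cap C$ for some open $O\subseteq G$ and then $A=O\cap\operatorname{int}(C)$, so $A$ is open in $G$ as well. Thus $A$ is a compact open neighbourhood of $e$ contained in $N$.

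For the second stage I would pass to the symmetric compact open neighbourhood $U:=A\cap A^{-1}\subseteq N$ and manufacture a subgroup from it. Continuity of multiplication at each pair $(e,u)$ with $u\in U$, together with compactness of $U$, produces an open neighbourhood $V$ of $e$ with $VU\subseteq U$: cover $U$ by the second coordinates of finitely many open product-boxes mapping into the open set $U$, and intersect the corresponding first coordinates. Replacing $V$ by $V\cap V^{-1}\cap U$, I may assume $V$ is symmetric with $V\subseteq U$ and $VU\subseteq U$, whence $V^n\subseteq U$ for every $n\geq 1$ by induction, so the subgroup $H:=\langle V\rangle=\bigcup_{n\geq 1}V^n$ satisfies $H\subseteq U\subseteq N$. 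Since $H$ contains the open set $V$, it is an open subgroup, hence also closed (its complement is a union of cosets); being a closed subset of the compact set $U$, it is compact. This exhibits a compact open subgroup $H$ with $e\in H\subseteq N$ and completes the argument. The difficulty is concentrated entirely in the first stage, in the passage from total disconnectedness to the separation of points of $C$ by clopen sets; the second stage is a routine compactness manipulation with the continuity of the group operations.
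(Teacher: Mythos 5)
Your proof is correct and complete. Note that the paper itself offers no proof of this statement: it is quoted as a classical theorem with a citation to van Dantzig's original 1936 paper, so there is no internal argument to compare against. Your two-stage argument — first extracting a compact open neighbourhood of the identity via the coincidence of components and quasi-components (the \v{S}ura-Bura property) in the compact Hausdorff set $C$, then generating a subgroup from a symmetric open $V$ with $VU\subseteq U$ and observing that it is open, hence closed, hence compact inside $U$ — is exactly the standard textbook proof of van Dantzig's theorem, and every step (the finite subcover giving $A\subseteq W$, the openness of $A$ in $G$ via $A\subseteq\operatorname{int}(C)$, and the induction $V^n\subseteq U$) is carried out correctly.
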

Note that every Hausdorff topological group admitting such a local basis is necessarily TDLC. Hence the conclusion of van Dantzig's Theorem characterizes TDLC-groups in the class of Hausdorff topological groups. 

For example, the non-Archimedean local fields $\Q_p$ and $\F_q((t))$ admit, respectively, the following local basis at the identity element:
	\begin{enumerate}
		\item $\{p^n\Z_p\mid n\in\N\}$, where $\Z_p=\{x\in\Q_p\mid |x|\leq1\}=\{x\in\Q_p\mid |x|<p\}$ is compact and open;
		\item $\{t^n\F_q[[t]]\mid n\in\N\}$, where the norm is defined by $q^{-ord(f)}$.
	\end{enumerate}

\subsection{Rational discrete $G$-modules}
Let $\Q$ denote the field of rational numbers, and let $\QGmod$ be the category of abstract left $\QG$-modules and their homomorphisms. A left $\QG$-module $M$ is said to be {\it discrete} if the stabilizer
\begin{equation}\nonumber
G_m=\{g\in G\mid g\cdot m=m\},
\end{equation}
of each element $m\in M$ is an open subgroup of $G$. Equivalently, the action $G\times M \to M$ is continuous when $M$ carries the discrete topology. The full subcategory of $\QGmod$ whose objects are discrete $\QG$-modules is denoted by $\QGdis$. It was shown in~\cite{CaWe16} that $\QGdis$ is an abelian category with enough injectives and projectives.

\subsection{Permutation $\QG$-modules in $\QGdis$} 

Let $\Omega$ be a non-empty left $G$-set. For $\omega\in \Omega$ let $G_\omega$ denote the pointwise stabilizer. The $G$-set $\Omega$ is called {\em discrete} if all pointwise stabilizers  are open subgroups of $G$, and $\Omega$ is called {\em proper} if all pointwise stabilizers  are open and compact. 

The $\Q$-vector space $\Q[\Omega]$ - freely spanned by a discrete $G$-set $\Omega$ - carries a canonical structure of discrete left $\QG$-module called the {\em discrete permutation $\QG$-module induced by $\Omega$}. 

Note that a discrete permutation $\QG$-module in $\QGdis$ is a coproduct 
\begin{equation}\nonumber
\Q[\Omega]\cong\coprod_{\omega\in\ca R}\Q[G/G_\omega],
\end{equation}
in $\QGdis$, where $\ca R$ is a set of representatives of the $G$-orbits in $\Omega$, and $\Omega$ is a discrete $G$-set.

A {\em proper permutation $\QG$-module} is a discrete $\QG$-module of the form $\Q[\Omega]$ where $\Omega$ is a proper $G$-set.

A proper permutation $\QG$-module is a projective object in $\QGdis$; see~\cite{CaWe16}. The arguments of this article rely on the following characterization of projective objects in $\QGdis$, a non-trivial result that in particular relies on Maschke's theorem on irreducible representations of finite groups, and Serre's results on Galois cohomology.
\begin{prop}[\protect{\cite[Corollary~3.3]{CaWe16}}]\label{prop:proj} Let $G$ be a TDLC-group. A discrete $\QG$-module $M$ is projective in $\QGdis$ if, and only if, $M$ is a direct summand of a proper permutation $\QG$-module in $\QGdis$.
\end{prop}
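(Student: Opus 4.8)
The statement is a biconditional, and the reverse implication is formal. In any abelian category a direct summand of a projective object is again projective; since a proper permutation $\QG$-module is projective in $\QGdis$ (as recalled immediately before the statement), any direct summand of such a module is projective. So the content lies entirely in the forward direction.

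For the forward direction the plan is first to establish that the proper permutation modules generate, in the strong sense that \emph{every} discrete $\QG$-module $M$ is an epimorphic image of a proper permutation module. Given $M$, fix for each $m\in M$ its stabilizer $G_m$, which is open because $M$ is discrete. Van Dantzig's Theorem then provides a compact open subgroup $U_m\leq G_m$. Since $U_m$ fixes $m$, the assignment $gU_m\mapsto g\cdot m$ extends to a well-defined morphism $\phi_m\colon\Q[G/U_m]\to M$ in $\QGdis$, and each $\Q[G/U_m]$ is a proper permutation module because the stabilizer of $gU_m$ is the compact open subgroup $gU_mg^{-1}$. Setting $\Omega=\coprod_{m\in M}G/U_m$, which is a proper $G$-set, the module $P=\Q[\Omega]\cong\coprod_{m\in M}\Q[G/U_m]$ is a proper permutation module, and the induced morphism $\phi=(\phi_m)_m\colon P\to M$ is an epimorphism because $\phi_m(eU_m)=m$ places every element of $M$ in the image.

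With this at hand the conclusion is immediate. Suppose $M$ is projective and apply the previous step to obtain an epimorphism $\phi\colon P\twoheadrightarrow M$ with $P$ a proper permutation module. Projectivity of $M$ lifts the identity $\iid_M$ through $\phi$ to a section $s\colon M\to P$ satisfying $\phi\circ s=\iid_M$; hence $\phi$ splits and $M$ is a direct summand of $P$, as required.

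I expect the only genuine subtlety to lie in the construction of the epimorphism, and specifically in arranging that the generating permutation module is \emph{proper} rather than merely discrete: this is exactly where van Dantzig's Theorem is used, to pass from the open stabilizers $G_m$ to compact open subgroups $U_m$. Everything else is the formal splitting argument for projectives combined with the already-cited projectivity of proper permutation modules; the proof of that cited fact reduces, via Frobenius reciprocity, the projectivity of $\Q[G/U]$ to the exactness of the $U$-invariants functor on smooth $\Q[U]$-modules, and it is precisely there that Maschke's theorem (through averaging over finite quotients of the profinite group $U$) and Serre's Galois cohomology enter.
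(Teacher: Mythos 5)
Your proof is correct and follows essentially the same route as the source of this statement: the paper itself offers no proof but cites \cite[Corollary~3.3]{CaWe16}, and the argument there is precisely your combination of the formal splitting/retract argument with the observation that every discrete $\QG$-module is an epimorphic image of a proper permutation module, constructed via van Dantzig's Theorem exactly as you do, with the projectivity of proper permutation modules taken as the previously established (Maschke/Serre) input. Nothing further is needed.
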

Throughout the article, we only consider resolutions consisting of discrete permutation $\QG$-modules, and we refer to this type of resolutions as {\em permutation resolutions in $\QGdis$}. Analogously, a resolution that consists only of proper permutation modules is called a {\em proper permutation resolution in $\QGdis$}. When the category is clear from the context, we will omit the term ``in $\QGdis$".

\subsection{Rational discrete homological finiteness}\label{ss:cfc}
Following~\cite{CaWe16}, we say that a TDLC-group $G$ is of {\em type $\FP_n$} ($n\in\N$) if there exists a partial proper permutation resolution in $\QGdis$
\begin{equation}\label{eq:fp}
\xymatrix{\Q[\Omega_n]\ar[r]&\Q[\Omega_{n-1}]\ar[r]&\cdots\ar[r]&\Q[\Omega_0]\ar[r]&\Q\ar[r]&0}
\end{equation}
of the trivial discrete $\QG$-module $\Q$ of {\it finite type}, i.e., every discrete left $G$-set $\Omega_i$ is finite modulo $G$ or equivalently $\Q[\Omega_i]$ is finitely generated. Type $\FP_n$ in this paper will always mean over $\Q$, though the definition generalizes to finite type proper permutation resolutions over discrete rings other than $\Q$, where the proper permutation modules are no longer projective in general -- see for example \cite{CCC}. The group $G$ is of {\em type $\FP_\infty$} if it is $\FP_n$ for every $n\in \N$. Notice that having type $\FP_0$ is an empty condition for a TDLC-group $G$. On the other hand, having type $\FP_1$ is equivalent to be compactly generated (see \cite[Proposition~5.3]{CaWe16}) and compact presentation implies type $\FP_2$. 

The {\em rational discrete cohomological dimension} of $G$, $\ccd_\Q(G)\in\N\cup\{\infty\}$, is defined to be the minimum $n$ such that the trivial discrete $\QG$-module $\Q$ admits a projective resolution
\begin{equation}
\xymatrix{0\ar[r]&P_n\ar[r]^{\partial_n}&P_{n-1}\ar[r]&\cdots\ar[r]&P_0\ar[r]&\Q\ar[r]&0}
\end{equation}
in $\QGdis$ of length $n$. The rational discrete cohomological dimension reflects structural
information on a TDLC-group $G$. For example, $G$ is profinite if and only if $\ccd_\Q(G)=0$.

By composing the notions above, one says that $G$ is of {\em type $\FP$} if
\begin{enumerate}[label=(\roman*)]
\item $G$ is of type $\FP_\infty$, and
\item $\ccd_\Q(G)=d<\infty$.
\end{enumerate}
For a TDLC-group $G$ of type $FP$, the trivial left $\QG$-module $\Q$ possesses a projective resolution $(P_{\bullet},\partial_{\bullet})$ which is finitely generated and concentrated in degrees $0$ to $d$. It is not known whether $(P_{\bullet},\partial_{\bullet})$ can be assumed to be a proper permutation resolution of finite length.

\subsection{Restriction of scalars}\label{ss:resind}
Let $H$ be a closed subgroup of the TDLC-group $G$. It follows that $H$ is a TDLC-group and in particular the category $\QHdis$ is well defined.
The restriction of scalars from $\QG$-modules to $\QH$-modules preserves discretness. In other words there is a well defined {\it restriction functor}
\begin{equation}
\rst^G_H(\argu)\colon\QGdis\to\QHdis,
\end{equation}
obtained by restriction of scalars via the natural map $\QH\hookrightarrow\QG$. The restriction is an exact functor which maps projectives to projectives. Indeed, for every proper permutation $\QG$-module $\Q[\Omega]$, the discrete $\QH$-module $\rst^G_H(\Q[\Omega])$ is still a proper permutation module in $\QHdis$. To simplify notation, for a discrete $\QG$-module $M$, we may write $M$ for $\rst^G_H(M)$ when the meaning is clear.
\medskip

\section{Cayley-Abels graphs, Compact presentability and Hyperbolicity}\label{sec:CayleyAbels}

\subsection{Compactly generated TDLC-groups and Cayley-Abels graphs}\label{defn:CAgraph}

In this article a graph is a $1$-dimensional simplicial complex, hence graphs are undirected, without loops, and without multiple edges between the same pair of vertices. 

A locally compact group is said to be {\em compactly generated} if there exists a compact subset that algebraically generates the whole group. 

\begin{prop}\cite[Theorem 2.2]{KrMo08}
A TDLC-group $G$ is compactly generated if and only if it acts vertex transitively with compact open vertex stabilizers  on a locally finite connected graph $\Gamma$.
\end{prop}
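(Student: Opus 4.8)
The statement to prove is the characterization of compactly generated TDLC-groups via actions on locally finite connected graphs, attributed to Krön--Möller.

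\begin{proof}[Proof proposal]
The plan is to prove both implications, constructing the requisite graph in one direction and extracting a compact generating set in the other. I will rely on van Dantzig's Theorem, which guarantees a base of compact open subgroups at the identity.

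First I would treat the easier direction: suppose $G$ acts vertex-transitively with compact open vertex stabilizers on a locally finite connected graph $\Gamma$. Fix a vertex $v_0$ and let $U = G_{v_0}$ be its stabilizer, which is compact and open by hypothesis. Since $\Gamma$ is locally finite, $v_0$ has finitely many neighbours; choose group elements $s_1, \dots, s_k$ carrying $v_0$ to each of these neighbours (using transitivity). I claim the compact set $K = U \cup \bigcup_i U s_i U$ generates $G$. The key point is that connectedness of $\Gamma$ lets one write any vertex as the endpoint of an edge-path from $v_0$; translating this path into group elements expresses an arbitrary $g \in G$ (up to right multiplication by $U$) as a product of the $s_i$ and elements of $U$. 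Since $U$ is compact and each $U s_i U$ is a finite union of cosets of the compact open $U$, the set $K$ is compact, so $G$ is compactly generated.

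For the converse, suppose $G$ is compactly generated, say by a compact set $S$. Using van Dantzig's Theorem, pick a compact open subgroup $U$. Because $S$ is compact and the cosets of $U$ form an open cover, $S$ is contained in finitely many left cosets of $U$; enlarging $S$, I may assume $S = S^{-1}$ is a finite union of double cosets $U a_i U$ with $U \subseteq S$. Now build the Cayley--Abels graph $\Gamma$: take the vertex set to be the coset space $G/U$, with $G$ acting by left translation (so vertex stabilizers are conjugates of the compact open $U$), and join two cosets $gU$ and $hU$ by an edge precisely when $g^{-1}h \in S \setminus U$. The action is vertex-transitive by construction. Local finiteness follows because $S$ meets only finitely many cosets of $U$, so each vertex has finitely many neighbours. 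Connectedness is exactly where compact generation enters: since $S \cup U$ generates $G$, every coset is reached from $U$ by a finite product of generators, which translates into an edge-path in $\Gamma$.

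I expect the main obstacle to be the careful bookkeeping in the converse, specifically verifying that the edge relation is symmetric and well-defined on cosets (so that $\Gamma$ is a genuine simplicial graph without loops or multiple edges, as required by the paper's conventions) and that local finiteness holds uniformly. The symmetry of the relation uses $S = S^{-1}$, while well-definedness on cosets requires that the double-coset structure $U S U = S$ is respected; one must check that declaring $gU \sim hU$ iff $g^{-1}h \in USU \setminus U$ does not depend on coset representatives. Handling the possibility of loops (when $g^{-1}h \in U$, excluded by construction) and collapsing multiple edges into single edges are the routine but necessary adjustments to match the graph conventions fixed at the start of the section.
\end{proof}
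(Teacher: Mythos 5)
Your proof is correct, and it is essentially the canonical argument: the easy direction extracts a compact generating set by translating edge-paths from a base vertex into products of the $s_i$ and stabilizer elements, and the converse is the Cayley--Abels coset-graph construction on $G/U$ with edges given by a symmetrized, $U$-bi-invariant compact generating set. The paper offers no proof of its own here --- it cites Kr\"on--M\"oller \cite[Theorem 2.2]{KrMo08}, whose proof of the ``rough Cayley graph'' theorem is precisely this construction --- so your argument coincides with the intended one, including the key bookkeeping points (well-definedness via $USU=S$, symmetry via $S=S^{-1}$, local finiteness from compactness of $S$ against the open cosets of $U$).
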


A graph with a $G$-action as in the proposition above is called a {\em Cayley-Abels graph} for $G$. In~\cite{KrMo08} these graphs are referred to as {\em rough Cayley graphs} but the notion of Cayley-Abels graph traces back to Abels~\cite{Ab72}. 

As soon as the compactly generated TDLC-group $G$ is non-discrete, the $G$-action on a Cayley-Abels graph is never free. That is to say, the action always has non-trivial vertex stabilizers . Nevertheless, these large but compact stabilizers  play an important role in the study of the cohomology of $G$: they give rise to proper permutation $\QG$-modules. 

A consequence of van Dantzig's Theorem is the following.

\begin{prop}\label{prop:compactgeneration}
For a TDLC-group $G$ the following statements are equivalent:
\begin{enumerate}
\item $G$ is compactly generated.
\item There exists a compact open subgroup $K$ of $G$ and a finite subset $S$ of $G$ such that $K\cup S$ generates $G$ algebraically.
\item There exists a finite graph of profinite groups $(A, \Lambda)$ with a single vertex, together with a continuous open surjective homomorphism $\phi\colon \pi_1(A, \Lambda, \Xi) \to G$ such that $\phi|_{\mathcal A_v}$ is injective for all $v\in \mathcal V(\Lambda)$.
\end{enumerate}
\end{prop}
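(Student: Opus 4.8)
The plan is to establish the cycle of implications $(2)\Rightarrow(1)$, $(1)\Rightarrow(2)$, $(2)\Rightarrow(3)$, $(3)\Rightarrow(2)$; this yields $(1)\Leftrightarrow(2)$ and $(2)\Leftrightarrow(3)$, and the only genuinely new content sits in the equivalence $(2)\Leftrightarrow(3)$. The implication $(2)\Rightarrow(1)$ is immediate, since if $K\cup S$ generates $G$ with $K$ compact open and $S$ finite, then $K\cup S$ is itself a compact generating set. For $(1)\Rightarrow(2)$ I would invoke van Dantzig's Theorem to fix a compact open subgroup $K\leq G$; given a compact generating set $C$, the open cosets $\{gK:g\in G\}$ cover $C$, so by compactness finitely many suffice, say $C\subseteq g_1K\cup\cdots\cup g_nK$. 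Then every element of $C$ lies in $\langle K\cup S\rangle$ for $S=\{g_1,\dots,g_n\}$, and as $C$ generates $G$ we obtain $G=\langle K\cup S\rangle$.

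For $(2)\Rightarrow(3)$ I would realize $G$ as a quotient of the fundamental group of a one-vertex graph of profinite groups. Writing $S=\{s_1,\dots,s_n\}$, take the vertex group $\mathcal{A}_v:=K$. For each $i$ introduce a single loop-edge $e_i$ with profinite edge group $\mathcal{A}_{e_i}:=K\cap s_iKs_i^{-1}$, which is open in $K$ because $s_iKs_i^{-1}$ is compact open; the two edge monomorphisms into $\mathcal{A}_v=K$ are the inclusion and the map $x\mapsto s_i^{-1}xs_i$, the latter being an isomorphism of $\mathcal{A}_{e_i}$ onto the open subgroup $K\cap s_i^{-1}Ks_i$. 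The fundamental group is then the multiple HNN extension $\pi_1=\langle K,t_1,\dots,t_n\mid t_i^{-1}xt_i=s_i^{-1}xs_i,\ x\in K\cap s_iKs_i^{-1}\rangle$, topologized so that $K$ is a compact open subgroup. Defining $\phi$ to be the inclusion $K\hookrightarrow G$ on $\mathcal{A}_v$ and $\phi(t_i):=s_i$ respects the defining relations, is surjective because its image contains the generating set $K\cup S$, and is injective on $\mathcal{A}_v=K$ by construction.

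The main obstacle is verifying that $\phi$ is continuous and open, which is precisely where the topology of $\pi_1(A,\Lambda,\Xi)$ as a TDLC-group enters, and where I would lean on the graph-of-profinite-groups construction cited in the statement. The key point is that the open subgroups of $K$ form a neighbourhood basis of the identity in $\pi_1$; since $\phi$ restricts to the identity on $K$, every such basic neighbourhood $V\leq K$ satisfies $\phi(V)=V$, which is open in $G$, giving openness of $\phi$. Continuity at the identity follows symmetrically: pulling back a compact open subgroup $W\leq K$ of $G$ gives $\phi^{-1}(W)\cap K=W$, a neighbourhood of the identity in $\pi_1$. I expect this topological bookkeeping, rather than the algebra of the HNN presentation, to be the delicate part.

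Finally, for $(3)\Rightarrow(2)$ I would simply read off the required data from any $\phi$ as in $(3)$. The subgroup $K:=\phi(\mathcal{A}_v)$ is compact, being the continuous image of the compact group $\mathcal{A}_v$, and open, being the image of an open subgroup under the open map $\phi$; hence $K$ is compact open in $G$. Letting $S$ be the (finite) set of images of the stable letters of $(A,\Lambda)$, surjectivity of $\phi$ together with the fact that $\mathcal{A}_v$ and the stable letters generate $\pi_1$ shows that $K\cup S$ generates $G$, which is exactly $(2)$.
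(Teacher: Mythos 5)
Your proposal is correct and takes essentially the same approach as the paper: the same van Dantzig plus finite-subcover argument for (1)$\Rightarrow$(2), and the same one-vertex graph of profinite groups with a loop edge per $s_i$, edge groups $K\cap s_iKs_i^{-1}$, and monomorphisms given by inclusion and conjugation for (2)$\Rightarrow$(3). The only differences are cosmetic: you close the cycle via (3)$\Rightarrow$(2) where the paper uses (3)$\Rightarrow$(1) (observing that $G$ is a continuous open quotient of the compactly generated group $\pi_1(A,\Lambda,\Xi)$), and you spell out the continuity/openness bookkeeping for $\phi$ that the paper delegates to the citation of \cite[Proposition 5.10, proof of (a)]{CaWe16}.
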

\begin{proof}
Note that if $C$ is a compact set generating $G$ and $K$ is a compact open subgroup of $G$ then there a finite subset $S\subset G$ such that the collection of left cosets $\{sK | s\in S\}$ covers $C$. Hence, by van Dantzig's Theorem, (1) implies (2). 
To show that (2) implies (3), consider the graph of groups with a single vertex and an edge for each element of $S$. The vertex group is $K$, and each edge group is $K\cap K^s$ with morphisms the inclusion and conjugation by $s$: see~\cite[Proposition 5.10, proof of (a)]{CaWe16}. That (3) implies (1) is immediate since $G$ is a quotient of the compactly generated TDLC-group $\pi_1(A, \Lambda, \Xi)$.
\end{proof}
Note that in the terminology of the third statement of the above proposition, a Cayley-Abels graph for $G$ can be obtained by considering the quotient of the (topological realisation as a $1$-dimensional simplicial complex of the) universal tree of $(A,\Lambda)$ by the kernel of $\phi$. 

\subsection{Quasi-isometry for TDLC-groups and Hyperbolicity.} 
 
The edge-path metric on a Cayley-Abels graph $\Gamma$ of a TDLC-group $G$ induces a left-invariant pseudo-metric on $G$, by pulling back the metric of the $G$-orbit of a vertex of $\Gamma$. In the following proposition, we denote this pseudo-metric by $\dist_\Gamma$. 
 
Following~\cite{CoHa16}, an action of a topological group $G$ on a (pseudo-) metric space $X$ is \emph{geometric} if it satisfies:
\begin{itemize}
\item (Isometric) The action is by isometries; 
\item (Cobounded) There is $F\subset X$ of finite diameter such that $\bigcup_{g\in G} gF =X$;
 \item (Locally bounded) For every $g\in G$ and bounded subset $B\subset X$ there is a neighborhood $V$ of $g$ in $G$ such that $VB$ is bounded in $X$; and 
 \item (Metrically proper) The subset $\{g\in G\colon \dist_X(x , gx)\leq R \}$ is relatively compact in $X$ for all $x\in X$ and $R>0$. 
 \end{itemize}
 
The following version of the {\v S}varc-Milnor Lemma is a consequence of work by Cornulier and de la Harpe on locally compact groups; see~\cite[Corollary 4.B.11 and Theorem 4.C.5]{CoHa16}.
\begin{prop}\label{prop:svarc}
 Let $G$ be a TDLC-group, let $X$ be a geodesic (pseudo-) metric space, and let $x\in X$. Suppose there exists a geometric action of $G$ on $X$. Then there is a Cayley-Abels graph $\Gamma$ for $G$ such that the map between the pseudo-metric spaces 
 \[ (G, \dist_\Gamma) \to (X, \dist_X),\qquad x\mapsto gx \]
 is a quasi-isometry. 
 \end{prop}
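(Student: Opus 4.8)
The plan is to prove Proposition~\ref{prop:svarc} by combining the classical \v Svarc--Milnor machinery for locally compact groups (as developed in~\cite{CoHa16}) with the characterization of Cayley--Abels graphs via van Dantzig's Theorem. Since a geometric action of $G$ on a geodesic (pseudo-)metric space $X$ is given, the first step is to extract, via~\cite[Corollary 4.B.11 and Theorem 4.C.5]{CoHa16}, the conclusion that $G$ is compactly generated and that for \emph{some} choice of compact generating set $S$ together with a compact open subgroup $K$, the resulting word (pseudo-)metric $\dist_{K\cup S}$ on $G$ is quasi-isometric to $(X,\dist_X)$ via the orbit map $g\mapsto gx$. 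Here the metric properness and local boundedness of the action guarantee that the stabilizer of $x$ (and more generally the ``Schreier'' preimages of bounded sets) is relatively compact, which is precisely what lets one promote an algebraic generating set to a \emph{compact} one.

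The second step is to translate the word-metric quasi-isometry into a Cayley--Abels graph. By Proposition~\ref{prop:compactgeneration}, compact generation of $G$ furnishes a compact open subgroup $K$ and a finite set $S$ with $K\cup S$ generating $G$; the associated Cayley--Abels graph $\Gamma$ has vertex set $G/K$ and edges determined by $S$. I would then verify that the edge-path metric $\dist_\Gamma$, pulled back to $G$ via the orbit of a base vertex, is quasi-isometric to the word pseudo-metric $\dist_{K\cup S}$ on $G$. This is essentially because the two metrics differ only by the bounded ambiguity introduced by collapsing the compact subgroup $K$: distances in $\Gamma$ count $S$-steps between $K$-cosets, while $\dist_{K\cup S}$ counts $(K\cup S)$-steps, and passing between them changes lengths by at most an additive constant governed by the diameter of $K$ in the word metric (which is finite since $K$ is generated within $K\cup S$). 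Composing this quasi-isometry with the one from the first step yields the desired quasi-isometry $(G,\dist_\Gamma)\to(X,\dist_X)$.

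The main obstacle I anticipate is bookkeeping the compatibility between the abstract generating set produced by the \v Svarc--Milnor argument and the specific $K$ and $S$ underlying the Cayley--Abels graph: a priori~\cite{CoHa16} delivers \emph{a} compact generating set, and one must arrange (or independently check) that the Cayley--Abels graph built from \emph{any} admissible pair $(K,S)$ is quasi-isometric to the one implicitly used, so that the choice is immaterial. This rests on the standard but slightly technical fact that all Cayley--Abels graphs of a fixed compactly generated TDLC-group are pairwise quasi-isometric (two compact open subgroups are commensurable, and two compact generating sets are ``uniformly comparable''), which I would invoke or record as a preliminary lemma. The remaining verifications---that the orbit map is coarsely surjective (coboundedness) and coarsely Lipschitz with a coarse inverse (from metric properness)---are routine once the metrics are identified, so the substantive content is entirely in reconciling the two descriptions of the coarse geometry of $G$.
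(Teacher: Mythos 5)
Your proposal is correct and takes essentially the same route as the paper: the paper offers no independent argument for this proposition, deriving it exactly as you do from \cite[Corollary 4.B.11 and Theorem 4.C.5]{CoHa16} together with the routine comparison between the word pseudo-metric for $K\cup S$ (with $K$, $S$ as in Proposition~\ref{prop:compactgeneration}) and the edge-path metric of the associated Cayley-Abels graph. One small imprecision: passing from $\dist_\Gamma$ back to $\dist_{K\cup S}$ costs a multiplicative constant, since each edge of $\Gamma$ corresponds to a word $k s k'$ of length at most $3$, not merely an additive one governed by the diameter of $K$ --- but this is harmless for the quasi-isometry conclusion.
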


This proposition implies the following result from~\cite[Theorem 2.7]{KrMo08}. 
\begin{cor}
The Cayley-Abels graphs associated to a compactly generated TDLC-group are all quasi-isometric to each other.
\end{cor}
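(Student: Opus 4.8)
The plan is to factor each Cayley-Abels graph through the group itself and reduce the statement to a comparison of two left-invariant pseudo-metrics on $G$. Fix Cayley-Abels graphs $\Gamma_1$ and $\Gamma_2$ with base vertices $v_1,v_2$. By the very definition of $\dist_{\Gamma_i}$ as the pullback of the edge-path metric along the orbit map $g\mapsto gv_i$, this orbit map is an isometric embedding of $(G,\dist_{\Gamma_i})$ onto the vertex orbit $Gv_i$. Since the $G$-action is vertex-transitive, $Gv_i$ is the full vertex set of $\Gamma_i$, which is $1$-dense; hence the orbit map is a quasi-isometry $(G,\dist_{\Gamma_i})\to\Gamma_i$ for $i=1,2$. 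It therefore suffices to prove that the identity map $(G,\dist_{\Gamma_1})\to(G,\dist_{\Gamma_2})$ is a quasi-isometry, for then composing the three quasi-isometries yields $\Gamma_1\simeq\Gamma_2$.

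To invoke Proposition~\ref{prop:svarc} I would first record that the $G$-action on each $\Gamma_i$, equipped with its geodesic edge-path metric, is geometric: it is isometric because $G$ acts by graph automorphisms, cobounded because it is vertex-transitive, and both locally bounded and metrically proper because the vertex stabilizers are compact and open. Thus each $\Gamma_i$ is an admissible target space $X$ in Proposition~\ref{prop:svarc}. Applying the proposition to $X=\Gamma_i$ produces a Cayley-Abels graph whose pullback pseudo-metric is quasi-isometric, via the common orbit map $g\mapsto gv_i$, to $\dist_{\Gamma_i}$; combined with the orbit-map quasi-isometry above, this shows that $\dist_{\Gamma_i}$ lies in the quasi-isometry class of the word pseudo-metric attached to a compact generating set of $G$.

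The remaining and genuinely substantive point is that this quasi-isometry class is independent of the chosen Cayley-Abels graph, i.e. that $\dist_{\Gamma_1}$ and $\dist_{\Gamma_2}$ agree up to quasi-isometry via the identity of $G$. I would establish this directly from van Dantzig's Theorem and Proposition~\ref{prop:compactgeneration}: each $\Gamma_i$ corresponds to a compact open subgroup $K_i$ and a finite set $S_i$ with $G=\langle K_i\cup S_i\rangle$, and $\dist_{\Gamma_i}$ is bi-Lipschitz to the associated word pseudo-metric. Because $K_1$ is compact and $K_2$ is open, finitely many left cosets of $K_2$ cover $K_1$, so every element of $K_1\cup S_1$ has bounded $\dist_{\Gamma_2}$-length; symmetrically for the reversed roles. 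These two-sided bounds give constants making the identity $(G,\dist_{\Gamma_1})\to(G,\dist_{\Gamma_2})$ a quasi-isometry, completing the argument.

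The main obstacle I anticipate is precisely this last independence step: Proposition~\ref{prop:svarc} on its own only yields, for each geometric action, the existence of \emph{some} quasi-isometric Cayley-Abels graph, and does not by itself identify the classes arising from two different graphs. The crux is to supply the well-definedness of the word pseudo-metric up to quasi-isometry, for which the decisive input is van Dantzig's Theorem: it guarantees that the distinguished subgroups $K_i$ are simultaneously compact \emph{and} open, and it is exactly the interplay of compactness of one with openness of the other that produces the two-sided Lipschitz comparison. One may alternatively phrase this independence as a consequence of the underlying results of de Cornulier and de la Harpe~\cite{CoHa16} from which Proposition~\ref{prop:svarc} is itself derived.
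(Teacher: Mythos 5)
Your argument is correct, but it takes a genuinely different and more self-contained route than the paper, which gives no proof at all: the paper simply asserts that the corollary follows from Proposition~\ref{prop:svarc}, attributing the statement to \cite[Theorem 2.7]{KrMo08}, so the real content is outsourced to the Cornulier--de la Harpe machinery \cite{CoHa16} underlying that proposition. You instead do the work by hand: after reducing, via the orbit maps (isometric onto the $1$-dense vertex sets, hence quasi-isometries), to comparing the two pullback pseudo-metrics on $G$, you prove the independence directly by passing to word pseudo-metrics for generating pairs $(K_i,S_i)$ as in Proposition~\ref{prop:compactgeneration} and exploiting the compact/open interplay: finitely many left cosets of the open subgroup $K_2$ cover the compact subgroup $K_1$, so each generator on one side has uniformly bounded word length on the other, which yields the two-sided bounds making the identity map a quasi-isometry. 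This is essentially the original argument of \cite[Theorem 2.7]{KrMo08} (compare \cite[Corollary 4.B.11]{CoHa16}) reconstructed from scratch. In doing so you also correctly flag, and repair, a point the paper glosses over: Proposition~\ref{prop:svarc} as literally stated only produces \emph{some} Cayley-Abels graph quasi-isometric to a given geometric $G$-space, and does not by itself identify the quasi-isometry classes arising from two different graphs; your coset argument is exactly the missing ingredient. What the paper's approach buys is brevity; what yours buys is completeness, at the cost of the middle paragraph (re-applying Proposition~\ref{prop:svarc} to $X=\Gamma_i$), which becomes dispensable once the direct comparison is in place.
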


 This quasi-isometric invariance of Cayley-Abels graphs allows us to define geometric notions for compactly generated TDLC-groups such as ends, number of ends or growth, by considering quasi-isometric invariants of a Cayley-Abels graph associated to $G$. 
 
 \begin{defn}
A TDLC-group $G$ is defined to be {\it hyperbolic} if $G$ is compactly generated and some (hence any) Cayley-Abels graph of $G$ is hyperbolic. 
\end{defn}

For an equivalent definition of hyperbolic TDLC-group using (standard) Cayley graphs over compact generating sets see~\cite{BMW12} for details.

\subsection{Compactly presented TDLC-groups}\label{subsec:compactlypresented}

A locally compact group is said to be {\em compactly presented} if it admits a presentation $\langle K\mid R\rangle$ where $K$ is a compact subset of $G$ and there is a uniform bound on the length of the relations in $R$. Observe that being compactly presented implies being compactly generated. There are also an equivalent definition of compact presentation~\cite[\S~5.8]{CaWe16} based on van Dantzig's Theorem in the context of Proposition~\ref{prop:compactgeneration}.

\begin{cor}\cite{CoHa16}
A TDLC-group $G$ is {compactly presented} if and only if
\begin{enumerate}
\item there exists a finite graph of profinite groups $(A, \Lambda)$ with a single vertex together with a continuous open surjective homomorphism $\phi\colon \pi_1(A, \Lambda, \Xi) \to G$ such that $\phi|_{\mathcal A_v}$ is injective for all $v\in \mathcal V(\Lambda)$, and 
\item the kernel of $\phi$ is finitely generated as a normal subgroup.
\end{enumerate}
\end{cor}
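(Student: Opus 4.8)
The plan is to separate the ``compact generation'' content, which is already supplied by Proposition~\ref{prop:compactgeneration}, from the genuinely new ``compact presentation'' content, which concerns only the kernel of $\phi$. In both directions the existence of an open surjection $\phi\colon\pi_1(A,\Lambda,\Xi)\to G$ as in~(1), injective on $\mathcal A_v$, is immediate: a group satisfying~(1) is compactly generated, and a compactly presented group is in particular compactly generated, so Proposition~\ref{prop:compactgeneration} produces such a $\phi$. Thus it suffices to show that, \emph{given} such a $\phi$, compact presentability of $G$ is equivalent to $\ker\phi$ being the normal closure $\nclose{S}$ of a finite set $S$.

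First I would record that $Q:=\pi_1(A,\Lambda,\Xi)$ is itself compactly presented. Its vertex group $\mathcal A_v$ is profinite, hence compact, and any compact group is compactly presented (take the whole group as compact generating set and all relations $ab=c$, of length $\le 3$, as relators). Adjoining one stable letter $t_e$ for each of the finitely many edges $e$, together with the HNN-relations $t_e\,\alpha_e(a)\,t_e^{-1}=\omega_e(a)$, $a\in\mathcal A_e$, where $\alpha_e,\omega_e\colon\mathcal A_e\hookrightarrow\mathcal A_v$ are the two edge inclusions --- which, read as words in $\mathcal A_v\cup\{t_e\}$, have uniformly bounded length --- presents $Q$ compactly.

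The key topological observation is that $N:=\ker\phi$ is \emph{discrete}. Indeed $\mathcal A_v$ is a compact open subgroup of $Q$, while $N\cap\mathcal A_v=\ker(\phi|_{\mathcal A_v})=\{1\}$ since $\phi|_{\mathcal A_v}$ is injective; hence $\{1\}$ is open in $N$ and $N$ is discrete. I would then invoke the extension criterion of Cornulier and de la Harpe~\cite{CoHa16}: for a topological extension $1\to N\to Q\to G\to 1$ with $Q$ compactly presented (and $\phi$ continuous, open and surjective, so that $G\cong Q/N$ topologically), the group $G$ is compactly presented if and only if $N$ is compactly generated as a normal subgroup of $Q$. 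Since $N$ is discrete, a subset of $N$ is compact precisely when it is finite; so ``compactly normally generated'' collapses to $N=\nclose{S}$ for a finite $S$, which is exactly condition~(2). This gives both implications at once.

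The step I expect to be the main obstacle is the extension criterion itself, namely that a compact presentation of $Q$ descends to one of $Q/N$ upon adjoining finitely many relators of bounded length that normally generate $N$, and that conversely a compact presentation of $G$ forces $N$ to be compactly normally generated. In the discrete setting this is the classical fact that $Q/N$ is finitely presented iff $N$ is normally finitely generated, proved by a Tietze/rewriting argument; here one needs its compactly-presented analogue from~\cite{CoHa16}, together with care that $\phi$ being open makes the algebraic quotient $Q/N$ agree with the topological one. The conceptually essential --- and distinctly TDLC --- point is the passage from ``compactly'' to ``finitely'' normally generated, which is exactly where discreteness of $\ker\phi$, forced by injectivity of $\phi$ on the open subgroup $\mathcal A_v$, is used.
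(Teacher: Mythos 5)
Your proof is correct and follows essentially the same route as the paper: both establish that $\pi_1(A,\Lambda,\Xi)$ is compactly presented via bounded-length relations (multiplication table of the vertex group plus HNN-relations), both rest on the Cornulier--de la Harpe extension criterion~\cite{CoHa16} relating compact presentability of the quotient to compact normal generation of the kernel, and both use discreteness of $\ker\phi$ to convert compact normal generation into finite normal generation. The only cosmetic difference is that you prove discreteness of $\ker\phi$ directly from injectivity of $\phi|_{\mathcal A_v}$ on the compact open subgroup $\mathcal A_v$, whereas the paper cites \cite[Proposition 5.10(b)]{CaWe16} for this fact.
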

\begin{proof}
Note that the if direction is immediate since $ \pi_1(A, \Lambda, \Xi)$ is compactly presented. Indeed, a group presentation of $\pi_1(A, \Lambda, \Xi)$ has as generators the formal union of the vertex group and a finite number of elements corresponding to the edges of the graph. The set of relations consists of the multiplication table of the vertex group and the HNN-relations; note that all these relations have length at most four. Since the kernel of $\phi$ is finitely generated as a normal subgroup, it follows that $G$ is compactly presented.

For the only if direction, since $G$ is compactly presented, in particular it is compactly generated and hence there is a finite graph of profinite groups $(A, \Lambda)$ with the required properties for (1). It remains to show that the kernel of $\phi$ is finitely generated as a normal subgroup. By \cite[Proposition 5.10(b)]{CaWe16}, $\kernel(\phi)$ is a discrete subgroup of $\pi_1(A, \Lambda, \Xi)$. Since $\pi_1(A, \Lambda, \Xi)$ is compactly generated and $G$ is compactly presented, \cite[Proposition 8.A.10(2)]{CoHa16} implies that $\kernel(\phi)$ is compactly generated as a normal subgroup; by discreteness it follows that $\kernel(\phi)$ is finitely generated as a normal subgroup.
\end{proof}

\begin{prop}\label{prop:F2model0}
A TDLC-group $G$ is compactly presented if and only if there exists a simply connected cellular $G$-complex $X$ with compact open cell stabilizers , finitely many $G$-orbits of cells of dimension at most $2$, and such that elements of $G$ fixing a cell setwise fixes it pointwise (no inversions). 
\end{prop}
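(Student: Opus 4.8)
The plan is to establish the equivalence in Proposition~\ref{prop:F2model0} by relating compact presentability to the existence of a suitable simply connected $2$-complex, using the graph-of-groups description from the preceding corollary as the bridge between the algebraic and geometric pictures. The key observation is that a presentation $\langle K \mid R \rangle$ with $K$ compact and $R$ of uniformly bounded length should be realized as the $2$-skeleton of a $G$-CW complex whose $G$-orbits of cells are in bijection with the generators and relators, while the compact open stabilizers encode the profinite structure.

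For the forward direction, I would start from the characterization in the corollary: compact presentability gives a finite graph of profinite groups $(A,\Lambda)$ with one vertex and a continuous open surjection $\phi\colon \pi_1(A,\Lambda,\Xi) \to G$ whose kernel $N = \kernel(\phi)$ is finitely generated as a normal subgroup. First I would build a Cayley-Abels-type $2$-complex for $\pi_1(A,\Lambda,\Xi)$: its universal tree (Bass-Serre tree) gives a contractible $1$-complex on which $\pi_1$ acts with compact open (profinite) stabilizers and finitely many orbits of cells, matching the vertex group $K$ and the finitely many edges. To kill the fundamental group of the resulting quotient graph, attach $2$-cells along the finitely many normal generators of $N$ and their translates. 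Passing to the quotient by $N$ yields a $G$-complex; I would then verify that (i) cell stabilizers remain compact open, since the edge and vertex groups are profinite and $\phi$ is injective on $\mathcal A_v$; (ii) there are finitely many $G$-orbits of cells in dimensions $\leq 2$, from finiteness of the generating data and of $R$; (iii) the no-inversions condition can be arranged by barycentric subdivision if necessary; and (iv) the complex is simply connected, which is where the normal generation of the relators by $N$ is used to trivialize $\pi_1$.

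For the converse, given a simply connected $G$-complex $X$ with the stated properties, I would extract a compact presentation directly. Choosing a vertex $v$ with compact open stabilizer $K = G_v$ (compact open by hypothesis and van Dantzig's Theorem), the $1$-skeleton produces generators: $K$ together with finitely many elements, one per $G$-orbit of edges, exactly as in the proof of Proposition~\ref{prop:compactgeneration}, giving compact generation. Simple connectivity means the finitely many $G$-orbits of $2$-cells provide relators whose normal closure, together with the vertex-group relations and HNN relations, trivializes the fundamental group of the quotient; because there are finitely many orbits of $2$-cells and each attaching map has bounded combinatorial length, the relators have uniformly bounded length, yielding a compact presentation $\langle K \mid R\rangle$.

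The main obstacle I expect is the forward direction's verification that the quotient complex $X = \tilde X / N$ is genuinely simply connected and that its cell stabilizers stay compact open after quotienting by the discrete normal subgroup $N$. Controlling stabilizers requires knowing that $N$ acts freely enough (which follows from discreteness of $N$ inside $\pi_1(A,\Lambda,\Xi)$, as established via \cite[Proposition 5.10(b)]{CaWe16}) so that the stabilizer of a cell in $X$ is the image of a cell-stabilizer upstairs and remains profinite. Simple connectivity is delicate because one must ensure the attached $2$-cells kill precisely the relations coming from $N$ and no more; the cleanest route is to first build a simply connected (indeed contractible) complex for $\pi_1(A,\Lambda,\Xi)$ and then apply the standard fact that attaching $2$-cells along a normal generating set of $N$, equivariantly, produces a simply connected $N$-free complex whose quotient has the desired properties. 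Keeping the equivariance and the bounded-orbit bookkeeping consistent throughout is the technical heart of the argument.
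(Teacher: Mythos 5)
Your main construction is correct, but note that it is genuinely different from what the paper does: the paper's proof is purely by citation, deducing ``existence of the complex $\Rightarrow$ compactly presented'' from~\cite[I.8, Theorem 8.10]{BH99} and ``compactly presented $\Rightarrow$ existence of the complex'' from~\cite[Proposition 3.4]{CCC}, with no construction given in the text. Your forward direction instead runs through the corollary on graphs of profinite groups: let $T$ be the Bass--Serre tree of $\pi_1(A,\Lambda,\Xi)$, observe that $N=\kernel(\phi)$ acts freely on $T$, so that $\pi_1(T/N)\cong N$, and attach $G$-orbits of $2$-cells to the quotient graph $T/N$ along loops representing the finitely many normal generators of $N$; since the $G$-translates of these loops realize all $\pi_1(A,\Lambda,\Xi)$-conjugates of the normal generators, their normal closure in $\pi_1(T/N)=N$ is all of $N$, and the resulting complex is simply connected. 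This is exactly the construction the paper itself carries out in Section~\ref{sec:SmallCancellation} for small cancellation quotients, so your route is compatible with the paper's toolkit; the citation buys brevity, while your version makes the complex and its compact open cell stabilizers explicit. Two small corrections: freeness of the $N$-action on $T$ follows from injectivity of $\phi$ on the vertex group (so that $N$ meets every conjugate of a vertex group trivially), not from discreteness of $N$ alone, which would only give finite intersections with the compact stabilizers; and in the converse direction the hypothesis does not include vertex-transitivity, so the compact generating set should be a finite union of vertex stabilizers, one per orbit, together with the edge elements, before invoking the argument of Proposition~\ref{prop:compactgeneration}.

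However, the ``cleanest route'' you propose in your final paragraph is wrong and would fail. You cannot first attach $2$-cells equivariantly to a simply connected complex for $\pi_1(A,\Lambda,\Xi)$ (i.e.\ to $T$) and then quotient by $N$: a loop in $T/N$ lifts to a loop in $T$ if and only if its class in $\pi_1(T/N)\cong N$ is trivial, because $T$ is simply connected and $N$ acts freely. So the attaching maps you need---loops representing nontrivial normal generators of $N$---simply do not exist as loops upstairs; they lift only to paths from $x_0$ to $n\cdot x_0$. Equivalently, any $2$-cell attached to $T$ along an honest loop descends in $T/N$ to a cell glued along a null-homotopic loop, so the quotient would still have fundamental group $N$. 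The order of operations is forced: quotient first, then attach the $2$-cells along $G$-orbits of relator loops. Since your second paragraph already describes this correctly, the fix is to discard the ``upstairs'' variant, not to change the main proof.
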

 A $G$-complex with the properties stated in the above proposition is called a \emph{topological model of $G$ of type $\F_2$}. 
 \begin{proof}[Proof of Proposition~\ref{prop:F2model0}] The equivalence of compact presentability and the existence of a topological model for $\F_2$ follows from standard arguments. That compact presentability is a consequence of the existence of the topological model follows directly from~\cite[I.8, Theorem 8.10]{BH99}; for compact presentability implying the existence of such a complex see for example~\cite[Proposition 3.4]{CCC}. 
\end{proof}

The following result is well known for discrete hyperbolic groups. The proof in~\cite[III.$\Gamma$ Theorem 3.21]{BH99} carries over for hyperbolic TDLC-groups by considering the Rips complex on a Cayley-Abels graph instead of the standard Cayley graph. 

\begin{prop}\label{prop:F2model}
Let $G$ be a hyperbolic TDLC-group. Then $G$ acts on a simplicial complex $X$ such that:
\begin{enumerate}
\item $X$ is finite dimensional, contractible and locally finite;
\item $G$ acts simplicially, cell stabilizers  are compact open subgroups, and there are finitely many $G$-orbits of cells.
\item $G$ acts transitively on the vertex set of $X$.
\end{enumerate}
In particular, the topological realization of the barycentric subdivision of $X$ is a topological model for $\F_2$, and hence $G$ is compactly presented.
\end{prop}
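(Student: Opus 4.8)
The plan is to construct the complex $X$ as a Rips-type complex built on a Cayley--Abels graph $\Gamma$ of $G$ and to import the discrete argument of \cite[III.$\Gamma$ Theorem 3.21]{BH99} essentially verbatim, checking at each step that the compact-open stabilizer structure is preserved. First I would fix, using the definition of hyperbolicity together with the existence result of Proposition~\ref{prop:compactgeneration}, a locally finite connected hyperbolic Cayley--Abels graph $\Gamma$ on which $G$ acts vertex-transitively with compact open vertex stabilizers and no inversions; let $\delta$ be a hyperbolicity constant for its edge-path metric $\dist_\Gamma$. For a parameter $d\in\N$ I would then define the Rips complex $X=P_d(\Gamma)$ whose $k$-simplices are the $(k{+}1)$-subsets of the vertex set of $\Gamma$ having pairwise $\dist_\Gamma$-distance at most $d$. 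The group $G$ acts simplicially on $X$ by its action on vertices, and this action is the object whose properties I must verify.

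Next I would establish the four listed conclusions. Finite-dimensionality of $X$ follows because local finiteness of $\Gamma$ bounds the number of vertices within any fixed distance $d$, so simplices have bounded cardinality; the same local-finiteness bound shows $X$ is locally finite. For contractibility I would invoke the standard fact (the content of the cited theorem in \cite{BH99}) that for $d\ge 4\delta+\varepsilon$ with a suitable constant the Rips complex $P_d(\Gamma)$ of a $\delta$-hyperbolic graph is contractible; this argument is purely metric and depends only on $(\Gamma,\dist_\Gamma)$, so it transfers without change. For the stabilizer and finiteness statements I would argue as follows: a simplex $\sigma$ is a finite set of vertices, so its setwise stabilizer $G_\sigma$ is the stabilizer of a finite vertex set, hence an intersection of finitely many vertex stabilizers together with a finite-index subgroup permuting them. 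Since each vertex stabilizer is compact and open and the diameter bound $d$ is preserved by the $G$-action, $G_\sigma$ is again compact and open, and by local finiteness of $\Gamma$ there are only finitely many $G$-orbits of vertices at bounded distance, giving finitely many $G$-orbits of simplices in each dimension. Vertex-transitivity is inherited directly from the chosen $\Gamma$.

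To obtain the no-inversion condition and the final sentence I would pass to the barycentric subdivision $X'$ of $X$. On a barycentric subdivision any simplicial automorphism that fixes a simplex setwise fixes it pointwise, because it must preserve the ordering of barycenters by the dimension of the original simplices they subdivide; moreover the stabilizers in $X'$ are again compact open and the orbit-finiteness persists, since barycentric subdivision only finitely refines each orbit. Thus the topological realization of $X'$ is a simply connected (indeed contractible) cellular $G$-complex with compact open cell stabilizers, finitely many orbits of cells in each dimension, and no inversions, so it is a topological model of $G$ of type $\F_2$ in the sense following Proposition~\ref{prop:F2model0}; that proposition then yields that $G$ is compactly presented.

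\textbf{Main obstacle.}
The metric input---contractibility of the Rips complex for large $d$---is standard and I expect it to cause no trouble. The step I expect to require the most care is verifying that the simplex stabilizers remain \emph{compact} open subgroups rather than merely open: one must check that the setwise stabilizer of a finite vertex set, which a priori only stabilizes the set up to permutation, is an extension of a finite permutation group by the compact open pointwise stabilizer and is therefore itself compact. Concretely the point is that the finite index of the pointwise stabilizer inside the setwise stabilizer, combined with compactness of the pointwise stabilizer (a finite intersection of compact open vertex stabilizers), forces compactness; making this precise, and confirming it is undisturbed by barycentric subdivision, is where the TDLC-specific verification concentrates.
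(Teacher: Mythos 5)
Your proposal is correct and follows essentially the same route as the paper, whose proof is precisely to take the Rips complex on a Cayley--Abels graph and observe that the argument of \cite[III.$\Gamma$ Theorem 3.21]{BH99} carries over. Your additional verifications (compactness of setwise simplex stabilizers via finite index over pointwise stabilizers, orbit finiteness from local finiteness and vertex transitivity, and the barycentric subdivision step to remove inversions) are exactly the TDLC-specific details the paper leaves implicit.
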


For a topological model $X$ of $G$ of type $\F_2$, by standard techniques we may add cells to kill higher homotopy, and get a contractible $G$-complex $X'$ on which $G$ acts simplicially with compact open stabilizers . Then the assumption on cell stabilizers  implies that the collection of $i$-cells of $X'$ is a proper $G$-set and hence $C_i(X',\Q)$ is a proper permutation $\QG$-module. Since $X'$ is contractible, the augmented chain complex $(C_\bullet (X',\Q),\partial_\bullet)$ is a projective resolution of $\Q$ in $\QGdis$ and, since $X'^{(2)} = X^{(2)}$ has finitely many orbits of cells, the chain complex is finitely generated in degrees $0$, $1$ and $2$. In particular compactly presented TDLC-groups have type $\FP_2$ in $\QGdis$.

%%%%

\section{Weak $n$-dimensional isoperimetric inequality }\label{sec:weakiso}
\subsection{(Pseudo-)Norms on vector spaces}
Given a vector space $V$ over a subfield $\F$ of the complex numbers, a {\it pseudo-norm} on $V$ is a nonnegative-valued scalar function $\norm{\argu}\colon V \to \mathbb{R}_+$ with the following properties:
\begin{enumerate}[label={(N\arabic*)}]
	\item (Subadditivity) $\norm{u+v}\leq\norm{u}+\norm{v}$ for all $u,v\in V$;
	\item (Absolute Homogeneity) $\norm{\lambda\cdot v}=|\lambda|\norm{v}$, for all $\lambda\in \F$ and $v\in V$.
\end{enumerate}
A pseudo-norm $\norm{\argu}$ on a vector space $V$ is said to be a {\it norm} if it satisfies the following additional property:
\begin{enumerate}[label={(N3)}]
	\item (Point-separation) $\norm{v}=0, v\in V\Rightarrow v=0$.
\end{enumerate}
Let $f\colon (V,\norm{\argu}_V)\to (W,\norm{\argu}_W)$ be a linear function between pseudo-normed vector spaces. We say that $f$ is {\it bounded} if there exists a constant $C>0$ such that $\norm{f(v)}_W\leq C\norm{v}_V$ for all $v\in V$. In such a case, we write $\norm{\argu}_W\preceq^f\norm{\argu}_V$ when the constant $C$ is irrelevant. Two different norms $\norm{\argu}$ and $\norm{\argu}'$ on $V$
are said to be {\it equivalent}, $\norm{\argu}\sim\norm{\argu}'$, if $\norm{\argu}\preceq^{\textup{id}}\norm{\argu}'\preceq^{\textup{id}}\norm{\argu}$. From here on the relation $\preceq^{\textup{id}}$ will be denoted as $\preceq$.

\subsection{$\ell_1$-norm on permutation $\QG$-modules} Let $\Q[\Omega]$ be a permutation $\QG$-module. In particular, $\Q[\Omega]$ is a $\Q$-vector space with linear basis $\Omega$. Therefore, the nonnegative-valued function
\begin{equation}\label{eq:l1norm}
\norm{\argu}^{\Omega}_1\colon\Q[\Omega]\to\Q_+,\quad\text{s.t.}\quad \sum_{\omega\in\Omega}\alpha_{\omega}\omega\mapsto\sum_{\omega\in\Omega}\abs{\alpha_{\omega}},
\end{equation}
defines a norm on $\Q[\Omega]$. As usual, we shall refer to $\norm{\argu}^{\Omega}_1$ as the {\it $\ell_1$-norm} on $\Q[\Omega]$. Notice that $\norm{\argu}^{\Omega}_1$ is $G$-equivariant.
\begin{prop}\label{prop:bounded} Let $\phi\colon\Q[\Omega]\to\Q[\Omega']$ be a morphism of finitely generated permutation $\QG$-modules. Then $\norm{\argu}^{\Omega'}_1\preceq^{\phi} \norm{\argu}^{\Omega}_1.$
\end{prop}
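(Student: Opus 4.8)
The plan is to exploit two features of the setup simultaneously: that finite generation of $\Q[\Omega]$ forces $\Omega$ to have only finitely many $G$-orbits, and that both $\phi$ and the two $\ell_1$-norms are $G$-equivariant. Together these reduce the problem of bounding $\phi$ on all of $\Q[\Omega]$ to bounding it on a finite set of orbit representatives, after which subadditivity and homogeneity of the norm finish the job.

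First I would fix a set $\ca R\subset\Omega$ of representatives of the $G$-orbits in $\Omega$, which is finite because $\Q[\Omega]$ is finitely generated. For an arbitrary basis element $\omega\in\Omega$, write $\omega=g\cdot\omega_0$ with $\omega_0\in\ca R$ and $g\in G$. Since $\phi$ is a morphism of $\QG$-modules it is $G$-equivariant, so $\phi(\omega)=g\cdot\phi(\omega_0)$; and since $\norm{\argu}^{\Omega'}_1$ is $G$-equivariant (the $G$-action merely permutes the basis $\Omega'$), this gives $\norm{\phi(\omega)}^{\Omega'}_1=\norm{\phi(\omega_0)}^{\Omega'}_1$. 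Thus the function $\omega\mapsto\norm{\phi(\omega)}^{\Omega'}_1$ is constant on each $G$-orbit.

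Next I would set $C:=\max_{\omega_0\in\ca R}\norm{\phi(\omega_0)}^{\Omega'}_1$, a maximum over a finite set and hence finite; by the previous step $\norm{\phi(\omega)}^{\Omega'}_1\leq C$ for every basis element $\omega\in\Omega$ (if $C=0$ then $\phi=0$ and the statement is trivial, so assume $C>0$). Finally, for a general element $v=\sum_{\omega\in\Omega}\alpha_\omega\,\omega$ (a finite sum), I would use $\Q$-linearity of $\phi$ together with subadditivity (N1) and absolute homogeneity (N2) to obtain
\[
\norm{\phi(v)}^{\Omega'}_1
=\norm{\sum_{\omega\in\Omega}\alpha_\omega\,\phi(\omega)}^{\Omega'}_1
\leq\sum_{\omega\in\Omega}\abs{\alpha_\omega}\,\norm{\phi(\omega)}^{\Omega'}_1
\leq C\sum_{\omega\in\Omega}\abs{\alpha_\omega}
=C\,\norm{v}^{\Omega}_1,
\]
which is exactly the assertion $\norm{\argu}^{\Omega'}_1\preceq^{\phi}\norm{\argu}^{\Omega}_1$.

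The argument presents no real obstacle; the only step that genuinely uses a hypothesis is the finiteness of the constant $C$, which rests entirely on finite generation (equivalently, on $\Omega$ having finitely many $G$-orbits). Without that assumption the supremum of $\norm{\phi(\omega)}^{\Omega'}_1$ over the basis could be infinite, and $\phi$ would fail to be bounded; so I would make sure to flag finite generation as the load-bearing assumption.
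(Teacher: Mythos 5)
Your proof is correct and is essentially the paper's argument: the paper encodes $\phi$ as a finite matrix $A=(a_{ij})$ with entries in $\QG$ and takes $C=\max\norm{a_{ij}}_1$, which is just a matrix-flavoured packaging of your constant $\max_{\omega_0\in\ca{R}}\norm{\phi(\omega_0)}^{\Omega'}_1$. Both arguments rest on exactly the two ingredients you flag as load-bearing: $G$-invariance of the $\ell_1$-norm (so it suffices to control $\phi$ on finitely many orbit representatives) and finite generation (so that maximum is over a finite set), after which subadditivity and homogeneity finish the estimate.
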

\begin{proof} 
This is a consequence of the $G$-invariance of the $\ell_1$-norm and the fact that the modules are finitely generated. Indeed, the morphism $\phi$ is described by a finite matrix $A=(a_{i j})$ with entries in $\QG$. Consider the $\ell_1$-norm $\norm{\argu}_1$ on $\QG$ and let $C=\max \norm{a_{i j}}$. Then $\norm{\phi (x)}_1^{\Omega'} \leq C \norm{x}_1^\Omega$ for every $x\in \Q[\Omega]$.
\end{proof}

The above proposition will be used for discrete permutation modules over $\QG$.

%%%
\subsection{Filling pseudo-norms on discrete $\QG$-modules} Let $M$ be a finitely generated discrete $\QG$-module. Since $\QGdis$ has enough projectives, there exists a finitely generated proper permutation $\QG$-module $\Q[\Omega]$ mapping onto $M$, that is, $\Q[\Omega]\stackrel{\partial} {\twoheadrightarrow} M$ and $G$ acts on $\Omega$ with compact open stabilizers  and finitely many orbits. The {\it filling pseudo-norm $\norm{\argu}_\partial$ on $M$ induced by $\partial$} is defined as
\begin{equation}\label{eq:norm induced}
\norm{m}_\partial=\inf\{\norm{x}^{\Omega}_1\mid x\in \Q[\Omega], \partial(x)=m\}.
\end{equation}
One easily verifies that $\norm{\argu}_\partial$ is subadditive and absolutely homogeneous. 
Note that 
\begin{equation}\label{eq:norm_preceq} \norm{\argu}_\partial\preceq^\partial\norm{\argu}_1^\Omega.\end{equation} It is an observation that an $\ell_1$-norm on a finitely generated discrete permutation $G$-module $\Q[\Omega]$ is equivalent to a filling norm.

\begin{prop}\label{prop: bounded finitely generated} 
	Morphisms between finitely generated discrete $\QG$-modules are bounded with respect to filling pseudo-norms.
\end{prop}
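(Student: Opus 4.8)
The plan is to reduce the statement to Proposition~\ref{prop:bounded} by exploiting the projectivity of finitely generated proper permutation modules. Let $M$ and $N$ be finitely generated discrete $\QG$-modules with filling pseudo-norms $\norm{\argu}_{\partial_M}$ and $\norm{\argu}_{\partial_N}$ induced, respectively, by surjections $\partial_M\colon\Q[\Omega_M]\twoheadrightarrow M$ and $\partial_N\colon\Q[\Omega_N]\twoheadrightarrow N$ from finitely generated proper permutation $\QG$-modules, and let $f\colon M\to N$ be an arbitrary morphism. The goal is to produce a single constant $C>0$ with $\norm{f(m)}_{\partial_N}\leq C\norm{m}_{\partial_M}$ for all $m\in M$, i.e. $\norm{\argu}_{\partial_N}\preceq^f\norm{\argu}_{\partial_M}$.

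First I would lift $f$ to the chosen covers. Since $\Q[\Omega_M]$ is projective in $\QGdis$ by Proposition~\ref{prop:proj}, and $\partial_N$ is an epimorphism, the composite $f\circ\partial_M\colon\Q[\Omega_M]\to N$ factors through $\partial_N$: there is a morphism of permutation modules $\tilde f\colon\Q[\Omega_M]\to\Q[\Omega_N]$ with $\partial_N\circ\tilde f=f\circ\partial_M$. Because $\Q[\Omega_M]$ and $\Q[\Omega_N]$ are finitely generated, Proposition~\ref{prop:bounded} applies to $\tilde f$ and yields a constant $C>0$ such that $\norm{\tilde f(x)}_1^{\Omega_N}\leq C\,\norm{x}_1^{\Omega_M}$ for every $x\in\Q[\Omega_M]$.

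Finally I would transfer this $\ell_1$-bound through the infima defining the filling norms. Fix $m\in M$ and $\eps>0$, and choose $x\in\Q[\Omega_M]$ with $\partial_M(x)=m$ and $\norm{x}_1^{\Omega_M}\leq\norm{m}_{\partial_M}+\eps$. Then $\partial_N(\tilde f(x))=f(\partial_M(x))=f(m)$, so $\tilde f(x)$ is an admissible filling of $f(m)$, whence
\[
\norm{f(m)}_{\partial_N}\leq\norm{\tilde f(x)}_1^{\Omega_N}\leq C\,\norm{x}_1^{\Omega_M}\leq C\bigl(\norm{m}_{\partial_M}+\eps\bigr).
\]
Letting $\eps\to 0$ gives $\norm{f(m)}_{\partial_N}\leq C\,\norm{m}_{\partial_M}$, as desired.

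The only substantive input is the lifting step: it is the projectivity of proper permutation modules (Proposition~\ref{prop:proj}) that guarantees $\tilde f$ exists \emph{as a morphism of permutation modules}, and hence that Proposition~\ref{prop:bounded} is applicable. I expect this to be the main point to get right, since without it one would only have a set-theoretic lift with no control on the $\ell_1$-norm; everything after it is a routine manipulation of the defining infimum. I would also remark that specializing the argument to the identity morphism shows the filling norm is well defined up to equivalence, independently of the chosen cover, so the statement is intrinsic to $M$ and $N$.
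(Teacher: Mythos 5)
Your proof is correct and follows essentially the same route as the paper's: lift $f$ to a morphism $\tilde f$ between the finitely generated proper permutation covers using projectivity, bound $\tilde f$ in $\ell_1$-norm via Proposition~\ref{prop:bounded}, and push the bound through the defining infimum with an $\eps$-argument. Your closing remark that specializing to the identity shows the filling pseudo-norm is well defined up to equivalence is exactly the paper's Corollary~\ref{cor:psudonorms}.
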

\begin{proof}
Let $f: M \to N$ be a morphism of finitely generated discrete $\QG$-modules. Since $M$ and $N$ are both finitely generated in $\QGdis$, there exist morphisms $\Q[\Omega_1]\stackrel{\partial_1}{\twoheadrightarrow }M$ and $\Q[\Omega_2]\stackrel{\partial_2}{\twoheadrightarrow }N$ such that each $\Q[\Omega_i]$ is a finitely generated proper permutation module. 
By the universal property of $\Q[\Omega_1]$ as a projective object, there is $\phi\colon \Q[\Omega_1] \to \Q[\Omega_2]$ such that the following diagram commutes:
\begin{equation*} \label{eq:diag1}
\begin{tikzpicture}[>=angle 90]
\matrix(a)[matrix of math nodes,
row sep=2.5em, column sep=2.5em,
text height=1.5ex, text depth=0.25ex]
{\Q[\Omega_1] &\Q[\Omega_2] \\
	M&N\\};
\path[dashed, ->](a-1-1) edge node[above]{$\phi$} (a-1-2);
\path[ ->>](a-1-1) edge node[left]{$\partial_1$}(a-2-1);
\path[->>](a-1-2) edge node[right]{$\partial_2$}(a-2-2);
\path[ ->](a-2-1) edge node[below]{$f$} (a-2-2);
\end{tikzpicture}
\end{equation*}
For any $m\in M$ and any $\varepsilon > 0$, let $x_m\in\Q[\Omega_1]$ such that $\partial_1(x_m)=m$ and $\norm{x_m}^{\Omega_1}_1 \preceq^{\partial_1} \norm{m}_{\partial_1} + \varepsilon$. Since $f(m)=\partial_2(\phi(x_m))$, one has
\begin{equation*}\label{eq:equinorm1}
\begin{array}{r l l l}
\norm{f(m)}_{\partial_2} &\preceq^{\partial_2} &\norm{\phi(x_m)}^{\Omega_2}_1 & \text{by~\eqref{eq:norm_preceq},}\\
&\preceq^\phi & \norm{x_m}^{\Omega_1}_1 & \text{by Proposition~\ref{prop:bounded},}\\
&\preceq^{\partial_1} & \norm{m}_{\partial_1} + \varepsilon.
\end{array}
\end{equation*}
Since $\varepsilon$ is arbitrary, we deduce $\norm{\argu}_{\partial_2}\preceq^{f} \norm{\argu}_{\partial_1}$.
\end{proof}
By considering the identity function on a finitely generated discrete $\QG$-module $M$, the previous proposition implies:
\begin{cor}\label{cor:psudonorms} 
Let $G$ be a TDLC-group. Any two filling pseudo-norms on a finitely generated discrete $\QG$-module $M$ are equivalent.

 In particular, all the filling pseudo-norms on a finitely generated proper permutation $\QG$-module $\Q[\Omega]$ are equivalent to $\norm{\argu}_1^{\Omega}$, and therefore they are all norms.
\end{cor}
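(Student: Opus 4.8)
The plan is to derive Corollary~\ref{cor:psudonorms} directly from Proposition~\ref{prop: bounded finitely generated} by an easy but careful specialization argument. First I would apply the proposition to the identity morphism $\iid_M\colon M\to M$ on a finitely generated discrete $\QG$-module $M$. Suppose $\norm{\argu}_{\partial_1}$ and $\norm{\argu}_{\partial_2}$ are two filling pseudo-norms on $M$, induced respectively by finitely generated proper permutation presentations $\Q[\Omega_1]\stackrel{\partial_1}{\twoheadrightarrow}M$ and $\Q[\Omega_2]\stackrel{\partial_2}{\twoheadrightarrow}M$. Reading the proposition with $N=M$, $f=\iid_M$, and the two given presentations, I obtain $\norm{\argu}_{\partial_2}\preceq\norm{\argu}_{\partial_1}$. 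Interchanging the roles of the two presentations (equivalently, applying the proposition again to $\iid_M$ but with $\partial_1$ and $\partial_2$ swapped) yields $\norm{\argu}_{\partial_1}\preceq\norm{\argu}_{\partial_2}$. The two bounds together give $\norm{\argu}_{\partial_1}\sim\norm{\argu}_{\partial_2}$, which is exactly the asserted equivalence of any two filling pseudo-norms.

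For the second, ``in particular'' clause, I would specialize further to the case $M=\Q[\Omega]$ a finitely generated proper permutation module. Here the tautological surjection $\iid\colon\Q[\Omega]\twoheadrightarrow\Q[\Omega]$ is itself a proper permutation presentation, so the $\ell_1$-norm $\norm{\argu}_1^\Omega$ arises as the filling pseudo-norm induced by this identity presentation. By the first part of the corollary, any other filling pseudo-norm on $\Q[\Omega]$ is equivalent to $\norm{\argu}_1^\Omega$. Since $\norm{\argu}_1^\Omega$ is a genuine norm (it satisfies point-separation, as $\Omega$ is a basis of $\Q[\Omega]$), and equivalence of pseudo-norms preserves the point-separation property — if $\norm{v}=0$ forces $\norm{v}'=0$ forces $v=0$ — it follows that every filling pseudo-norm on $\Q[\Omega]$ is in fact a norm.

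I do not expect any substantive obstacle here: the corollary is a formal consequence of Proposition~\ref{prop: bounded finitely generated}, and the only points requiring any care are bookkeeping. The first is to verify that $\norm{\argu}_1^\Omega$ really is the filling pseudo-norm associated with the identity map, so that it falls under the scope of the previous results; this is immediate from definition~\eqref{eq:norm induced} since the infimum over preimages of $m$ under $\iid$ is attained at $x=m$ itself. The second is the observation that $\preceq$ is reflexive and that mutual domination is precisely the definition $\sim$ of equivalence, so the two one-sided bounds combine correctly. The transfer of the point-separation axiom along an equivalence is the only genuinely new verification, and it is a one-line argument using absolute homogeneity and the definition of $\preceq$.
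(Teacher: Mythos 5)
Your proof is correct and follows exactly the paper's route: the paper likewise obtains the first claim by applying Proposition~\ref{prop: bounded finitely generated} to the identity map with the two presentations, and the ``in particular'' clause by observing that $\norm{\argu}_1^{\Omega}$ is itself the filling pseudo-norm of the identity presentation. Your additional bookkeeping (the infimum being attained at $x=m$, and equivalence transferring point-separation) just makes explicit what the paper leaves as an observation.
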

The former implies that each finitely generated discrete $\QG$-module $M$ admits a unique filling pseudo-norm up to equivalence. Therefore, by abuse of notation, we denote by $\norm{\argu}_M$ any filling pseudo-norm of $M$ and we refer to $\norm{\argu}_M$ as the {\it filling pseudo-norm of $M$}.

\subsection{Undistorted submodules}

Let $M$ be a discrete $\QG$-module with a norm $\norm{\argu}$ and let $N$ be a finitely generated discrete $\QG$-submodule of $M$. Then $N$ is said to be {\em undistorted with respect to $\norm{\argu}$} if the restriction of $\norm{\argu}$ to $N$ is equivalent to a filling norm on $N$. In the case that $M$ is finitely generated and $N$ is undistorted with respect to the filling norm $\norm{\argu}_M$ we shall simply say that $N$ is {\em undistorted} in $M$.

We note that in general it is not the case that finitely generated submodules of $M$ are undistorted; we refer the reader to Section~\ref{sec:proof2} for counter-examples.

\begin{prop}\label{prop:inclusionbounded} 
Let $G$ be a TDLC-group. The filling pseudo-norm $\norm{\argu}_P$ of a finitely generated projective discrete $\QG$-module $P$ is a norm. Moreover, if $P$ is a direct summand of a finitely generated proper permutation module $\Q[\Omega]$, then $P$ is undistorted in $\Q[\Omega]$.
\end{prop}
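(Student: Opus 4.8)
The plan is to prove both assertions simultaneously by exhibiting one particularly convenient filling norm on $P$ coming from a splitting and comparing it with the norm that $P$ inherits as a subspace of $\Q[\Omega]$. First I would reduce the first statement to the situation of the second. Since $P$ is finitely generated in $\QGdis$, there is a finitely generated proper permutation module $\Q[\Omega]$ and a surjection $\partial\colon \Q[\Omega]\to P$; because $P$ is projective, $\partial$ splits, so $P$ is isomorphic to a direct summand of $\Q[\Omega]$. Hence in both statements I may assume $\Q[\Omega]=\iota(P)\oplus\ker\pi$, where $\iota\colon P\to\Q[\Omega]$ is the inclusion and $\pi\colon\Q[\Omega]\to P$ is the projection, so that $\pi\iota=\iid_P$ and $e=\iota\pi$ is an idempotent endomorphism of $\Q[\Omega]$ with image $\iota(P)$.

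Next, observe that $\pi$ is a surjection from a finitely generated proper permutation module, so it induces a filling pseudo-norm $\norm{\argu}_\pi$ on $P$. By Corollary~\ref{cor:psudonorms} every filling pseudo-norm of $P$ is equivalent to $\norm{\argu}_\pi$, so it is enough to analyze this one. I would compare it with the norm $\nu$ on $P$ defined by $\nu(p)=\norm{\iota(p)}_1^{\Omega}$, the transport along $\iota$ of the restriction of $\norm{\argu}_1^{\Omega}$ to $\iota(P)$; since $\norm{\argu}_1^{\Omega}$ is a norm and $\iota$ is injective, $\nu$ is a genuine norm. The comparison runs in two directions. For $\norm{\argu}_\pi\preceq\nu$: the element $\iota(p)$ is a preimage of $p$ under $\pi$, so by the definition of the filling pseudo-norm $\norm{p}_\pi\leq\norm{\iota(p)}_1^{\Omega}=\nu(p)$. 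For $\nu\preceq\norm{\argu}_\pi$: Proposition~\ref{prop:bounded} provides a constant $C$ with $\norm{e(x)}_1^{\Omega}\leq C\,\norm{x}_1^{\Omega}$ for all $x\in\Q[\Omega]$; whenever $\pi(x)=p$ we have $e(x)=\iota(\pi(x))=\iota(p)$, hence $\nu(p)=\norm{\iota(p)}_1^{\Omega}\leq C\,\norm{x}_1^{\Omega}$, and taking the infimum over all such $x$ gives $\nu(p)\leq C\,\norm{p}_\pi$. Therefore $\norm{\argu}_\pi\sim\nu$.

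From this equivalence both conclusions follow. Since $\nu$ is a norm and $\norm{\argu}_\pi\sim\nu$, the filling pseudo-norm $\norm{\argu}_\pi$ is a norm, and by Corollary~\ref{cor:psudonorms} so is every filling pseudo-norm of $P$; this gives the first assertion. For the second, recall from Corollary~\ref{cor:psudonorms} that the filling norm $\norm{\argu}_{\Q[\Omega]}$ is equivalent to $\norm{\argu}_1^{\Omega}$, so the restriction of $\norm{\argu}_{\Q[\Omega]}$ to $P$ is equivalent to $\nu$, which we have just shown is equivalent to the filling norm $\norm{\argu}_\pi$ of $P$. By the definition of undistortedness this is exactly the statement that $P$ is undistorted in $\Q[\Omega]$.

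I expect the only step with real content to be the lower bound $\nu\preceq\norm{\argu}_\pi$, i.e. the undistortedness direction: the key idea is that the idempotent $e$ realizes the canonical filling $\iota(p)$ of $p$ in a uniformly bounded way, which is precisely where the boundedness of morphisms between finitely generated permutation modules (Proposition~\ref{prop:bounded}) is indispensable. Everything else is routine bookkeeping with equivalences of norms via Corollary~\ref{cor:psudonorms}.
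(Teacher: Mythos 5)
Your proof is correct and takes essentially the same route as the paper's: both reduce to a splitting $\pi\circ\iota=\iid_P$ with $\Q[\Omega]$ a finitely generated proper permutation module, and deduce the two norm comparisons from boundedness of morphisms. The only real difference is bookkeeping: the paper applies Proposition~\ref{prop: bounded finitely generated} directly to $\iota$ and $\pi$, whereas you inline that argument, getting one inequality with constant $1$ from the definition of $\norm{\argu}_\pi$ and the other from Proposition~\ref{prop:bounded} applied to the idempotent $\iota\pi$ (and, as a bonus, you justify more carefully than the paper why $\Q[\Omega]$ may be taken finitely generated).
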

\begin{proof}
Let $\Q[\Omega]$ be a finitely generated proper permutation module such that $P$ is a direct summand of $\Q[\Omega]$; see Proposition~\ref{prop:proj}. Let $\iota\colon P\to\Q[\Omega]$ be the inclusion and let $\pi\colon\Q[\Omega]\twoheadrightarrow P$ be the projection such that $\pi\circ\iota=\textup{id}_P$. Proposition~\ref{prop: bounded finitely generated} implies $\norm{\argu}_1 ^\Omega\preceq^\iota \norm{\argu}_P$ and $\norm{\argu}_P\preceq^\pi\norm{\argu}_1 ^\Omega$ on $P$. The former inequality implies that $\norm{\argu}_P$ is a norm, and both of them imply that $\norm{\argu}_P\sim\norm{\argu}_1^\Omega$ on $P$. 
 \end{proof}

More generally, this argument shows that a direct summand of any finitely generated discrete $\QG$-module, with the filling norm, is undistorted.

We conclude the section with a technical result about bounded morphisms that will be used later and relies on the proof of the previous proposition.
\begin{prop}\label{Prop:normHG}
Let $G$ be a TDLC-group and $H$ a closed subgroup of $G$. Let $M$ be a finitely generated and projective $\QG$-module in $\QGdis$ with filling norm $\norm{\argu}_M$. 
Regard $M$ as a $\QH$-module via restriction, and suppose that $N$ is a finitely generated direct summand of $M$ in $\QHdis$. 
Then $N$ is an undistorted $\QH$-module of $M$ with respect to the norm $\norm{\argu}_M$.
\end{prop}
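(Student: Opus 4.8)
The plan is to compare both filling norms to the $\ell_1$-norm of a single ambient proper permutation module, following the proof of Proposition~\ref{prop:inclusionbounded}, and then to deal with the failure of finite generation after restriction by passing to a finitely generated sub-permutation-module.

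First I would use that $M$ is finitely generated and projective in $\QGdis$: by Proposition~\ref{prop:proj} it is a direct summand of a finitely generated proper permutation $\QG$-module $\Q[\Omega]$, with inclusion $\iota_M\colon M\to\Q[\Omega]$ and retraction $\pi_M\colon\Q[\Omega]\twoheadrightarrow M$ satisfying $\pi_M\iota_M=\iid_M$. Proposition~\ref{prop:inclusionbounded} then gives $\norm{\argu}_M\sim\norm{\argu}_1^\Omega$ on $M$. The key observation is that this is an equivalence of norms on the underlying $\Q$-vector space of $M$, so it persists verbatim after restricting scalars to $H$; likewise $\norm{\argu}_1^\Omega$ depends only on the basis $\Omega$ and not on the acting group, and by \S\ref{ss:resind} the restriction $\rst^G_H\Q[\Omega]$ is again a proper permutation module while $\iota_M,\pi_M$ remain $\QH$-morphisms.

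Next, writing $\iota_N\colon N\to M$ and $\pi_N\colon M\twoheadrightarrow N$ for the given $\QH$-splitting, the composites $\iota:=\iota_M\iota_N$ and $\pi:=\pi_N\pi_M$ exhibit $N$ as a direct summand of $\rst^G_H\Q[\Omega]$ in $\QHdis$. Here lies the main obstacle: $\rst^G_H\Q[\Omega]$ need not be finitely generated, because $\Omega$ may split into infinitely many $H$-orbits, so Proposition~\ref{prop:inclusionbounded} cannot be invoked for it directly. I would resolve this by exploiting that $N$ is finitely generated: a finite generating set has images under $\iota$ supported on a finite subset $\Omega_0\subseteq\Omega$, and $\Omega_1:=H\cdot\Omega_0$ is an $H$-invariant subset with finitely many $H$-orbits and compact open $H$-stabilizers (as in \S\ref{ss:resind}). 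Hence $\Q[\Omega_1]$ is a finitely generated proper permutation $\QH$-module containing $\iota(N)$, and $\pi$ restricts to a retraction $\Q[\Omega_1]\twoheadrightarrow N$, so $N$ is a direct summand of $\Q[\Omega_1]$ in $\QHdis$.

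Finally I would apply Proposition~\ref{prop:inclusionbounded} in $\QHdis$ to conclude $\norm{\argu}_N\sim\norm{\argu}_1^{\Omega_1}$ on $N$, and note that $\norm{\argu}_1^{\Omega_1}$ is just the restriction of $\norm{\argu}_1^\Omega$ to $\Q[\Omega_1]\supseteq\iota(N)$. Chaining the equivalences,
\[\norm{\argu}_M\big|_N\ \sim\ \norm{\argu}_1^\Omega\big|_{\iota(N)}\ =\ \norm{\argu}_1^{\Omega_1}\big|_{\iota(N)}\ \sim\ \norm{\argu}_N,\]
which is exactly the assertion that $N$ is undistorted in $M$ with respect to $\norm{\argu}_M$. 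Everything beyond the finite-support reduction to $\Q[\Omega_1]$ is routine bookkeeping with equivalences of norms, using Corollary~\ref{cor:psudonorms} to guarantee that the filling norms involved are well defined up to equivalence.
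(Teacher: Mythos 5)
Your proposal is correct and follows essentially the same route as the paper's proof: reduce via Proposition~\ref{prop:proj} and Proposition~\ref{prop:inclusionbounded} to the case where $M$ sits inside a finitely generated proper permutation $\QG$-module $\Q[\Omega]$, use finite generation of $N$ to find a finitely generated $H$-invariant subset ($\Omega_1 = H\cdot\Omega_0$, the paper's $\Sigma$) with $N$ a direct summand of $\Q[\Omega_1]$, and apply Proposition~\ref{prop:inclusionbounded} in $\QHdis$ together with the observation that $\norm{\argu}_1^{\Omega_1}$ agrees with $\norm{\argu}_1^{\Omega}$ on $\Q[\Omega_1]$. Your version merely makes explicit some bookkeeping (the retraction restricting to $\Q[\Omega_1]$, and the persistence of the norm equivalence under restriction of scalars) that the paper leaves implicit.
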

\begin{proof} 
The $\QH$-module $N$ is projective since the restriction of $M$ is projective and hence $N$ is a direct summand of a projective $\QH$-module.
 
By Proposition~\ref{prop:inclusionbounded}, $M$ can be assumed to be a finitely generated proper permutation $\QG$-module $\Q[\Omega]$.
Note that the restriction of $\Q[\Omega]$ is a proper permutation $\Q[H]$-module. 
 
Since $N$ is finitely generated, there exists an $H$-subset $\Sigma$ of $\Omega$ such that $\Sigma/H$ is finite and $N$ is a $\QH$-submodule of $\Q[\Sigma]$. 
Since $N$ and $\Q[\Sigma]$ are direct summands of $\Q[\Omega]$ as $\QH$-modules, it follows that $N$ is a direct summand of the finitely generated proper permutation $\QH$-module $\Q[\Sigma]$.

Proposition~\ref{prop:inclusionbounded} implies that the pseudo-norm $\norm{\argu}_N$ is a norm and $\norm{\argu}_N \sim \norm{\argu}_1^\Sigma$ on $N$. Since $\norm{\argu}_1^\Sigma= \norm{\argu}^\Omega_1$ on $\Q[\Sigma]$, it follows that $\norm{\argu}_N\sim\norm{\argu}_1^{\Omega}$ on the elements of $N$.
\end{proof}

\subsection{Weak $n$-dimensional linear isoperimetric inequality }\label{subsec:weaklin}
 Let $G$ be a TDLC-group of type $\FP_{n+1}$. Then there exists a partial proper permutation resolution 
\begin{equation} \label{res:defn}
\xymatrix{\Q[\Omega_{n+1}]\ar[r]^{\delta_{n+1}}&\Q[\Omega_n]\ar[r]^{\delta_n}&\cdots\ar[r]&\Q[\Omega_1]\ar[r]^{\delta_1}&\Q[\Omega_0]\ar[r]&\Q\ar[r]&0\\ }
\end{equation}
of finite type, i.e. it consists of finitely generated discrete $\QG$-modules. We say that $G$ satisfies the {\it weak $n$-dimensional linear isoperimetric inequality} if $\ker(\delta_n)$ is an undistorted submodule of $\Q[\Omega_n]$. The special case for $n=1$ is referred as the {\it weak linear isoperimetric inequality}.

Note that, by Proposition~\ref{prop: bounded finitely generated}, $\norm{\argu}^{\Omega_n}_1\preceq^\imath \norm{\argu}_{\ker(\partial_n)}$ where $\imath\colon \ker(\partial_n) \to \Q[\Omega_n]$ is the inclusion. Hence, the weak $n$-dimensional linear isoperimetric inequality is equivalent to the existence of a constant $C>0$ such that 
$\norm{\argu}_{\ker(\partial_n)} \leq C \norm{\argu}^{\Omega_n}_1$ on $\ker(\partial_n)$.

The proof of the following proposition is an adaption of the proof of~\cite[Theorem 3.5]{HM16} that we have included for the reader's convenience.
\begin{prop}\label{prop:AlgebraicDef} For a TDLC-group $G$ of type $\FP_{n+1}$, the property of satisfying the weak linear $n$-dimensional isoperimetric inequality is independent of the choice of the proper permutation resolution of finite type in $\QGdis$.
\end{prop}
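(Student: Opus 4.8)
The plan is to show that the weak $n$-dimensional linear isoperimetric inequality, stated in terms of one finite-type proper permutation resolution, transfers to any other such resolution. The key idea is that given two partial proper permutation resolutions of finite type,
\begin{equation*}
\xymatrix{\Q[\Omega_{n+1}]\ar[r]^{\delta_{n+1}}&\Q[\Omega_n]\ar[r]^{\delta_n}&\cdots\ar[r]&\Q[\Omega_0]\ar[r]&\Q\ar[r]&0}
\end{equation*}
and an analogous resolution with modules $\Q[\Omega'_i]$ and differentials $\delta'_i$, the standard comparison theorem for projective resolutions yields chain maps $\phi_\bullet\colon \Q[\Omega_\bullet]\to\Q[\Omega'_\bullet]$ and $\psi_\bullet\colon\Q[\Omega'_\bullet]\to\Q[\Omega_\bullet]$ lifting the identity on $\Q$, together with chain homotopies between $\psi_\bullet\circ\phi_\bullet$ and the identity (and symmetrically). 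Since all the modules involved are finitely generated, every component of these chain maps and homotopies is a morphism of finitely generated discrete $\QG$-modules, and so by Proposition~\ref{prop: bounded finitely generated} each is bounded with respect to the relevant filling norms.

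First I would set up the two resolutions and invoke the comparison theorem in $\QGdis$ (which has enough projectives) to obtain $\phi_\bullet$ and $\psi_\bullet$. The crucial observation is that in degree $n$ the chain maps restrict to maps between the cycle submodules: $\phi_n$ maps $\ker(\delta_n)$ into $\ker(\delta'_n)$ and $\psi_n$ maps $\ker(\delta'_n)$ into $\ker(\delta_n)$, because chain maps commute with the differentials. I would then compare the filling norm on the cycle submodule $\ker(\delta_n)$ (as a finitely generated discrete $\QG$-module in its own right) with the ambient $\ell_1$-norm $\norm{\argu}_1^{\Omega_n}$, and do the same on the primed side. The goal is to establish that the inequality $\norm{\argu}_{\ker(\delta_n)}\preceq\norm{\argu}_1^{\Omega_n}$ on $\ker(\delta_n)$ is equivalent to $\norm{\argu}_{\ker(\delta'_n)}\preceq\norm{\argu}_1^{\Omega'_n}$ on $\ker(\delta'_n)$.

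The mechanism for the transfer is a norm-tracking argument: for $z\in\ker(\delta_n)$, one has $\norm{z}_1^{\Omega_n}\sim\norm{\psi_n(\phi_n(z))}_1^{\Omega_n}$ up to the bounded homotopy correction, so using boundedness of $\psi_n$ (Proposition~\ref{prop:bounded}) we get $\norm{z}_1^{\Omega_n}\preceq\norm{\phi_n(z)}_1^{\Omega'_n}$, while boundedness of $\phi_n$ gives the reverse estimate $\norm{\phi_n(z)}_1^{\Omega'_n}\preceq\norm{z}_1^{\Omega_n}$; thus $\phi_n$ and $\psi_n$ induce a bi-Lipschitz correspondence between the ambient norms restricted to the cycle submodules. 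Combining this with the fact that filling norms on a fixed finitely generated discrete $\QG$-module are unique up to equivalence (Corollary~\ref{cor:psudonorms}), and that $\phi_n,\psi_n$ restrict to morphisms of the finitely generated cycle modules so that the filling norms $\norm{\argu}_{\ker(\delta_n)}$ and $\norm{\argu}_{\ker(\delta'_n)}$ correspond under these maps as well (again by Proposition~\ref{prop: bounded finitely generated}), I would conclude that undistortion of $\ker(\delta_n)$ in $\Q[\Omega_n]$ holds if and only if undistortion of $\ker(\delta'_n)$ in $\Q[\Omega'_n]$ holds.

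The main obstacle I anticipate is the careful bookkeeping to ensure that the chain homotopy correction term does not interfere with the norm comparison: the homotopy $h$ satisfies $\psi_n\phi_n-\iid = \delta_{n+1}h_n + h_{n-1}\delta_n$, and on a cycle $z\in\ker(\delta_n)$ the second term vanishes, leaving $\psi_n(\phi_n(z))-z = \delta_{n+1}(h_n(z))$, which lies in the image of $\delta_{n+1}$ but need not be zero. To recover $z$ from $\phi_n(z)$ up to bounded error I must track this $\delta_{n+1}$-boundary term, and the cleanest way is to work directly with the filling norms of the cycle modules rather than the ambient $\ell_1$-norms, using that $\phi_n$ and $\psi_n$ restrict to \emph{inverse-up-to-bounded-correction} morphisms of the finitely generated modules $\ker(\delta_n)$ and $\ker(\delta'_n)$, so that Proposition~\ref{prop: bounded finitely generated} applied on both sides yields equivalence of the two filling norms; the undistortion property, being the comparison of such a filling norm with the ambient $\ell_1$-norm, is then seen to be resolution-independent.
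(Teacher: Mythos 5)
Your skeleton---compare the two resolutions by chain maps $\phi_\bullet,\psi_\bullet$ and a chain homotopy $h$, and use Proposition~\ref{prop: bounded finitely generated} to make every arrow bounded---is exactly the paper's strategy, but the pivotal transfer step is false in both of the forms you give it. From $z=\psi_n\phi_n(z)-\delta_{n+1}h_n(z)$ you only obtain $\norm{z}\leq\norm{\psi_n\phi_n(z)}+\norm{\delta_{n+1}h_n(z)}$, and the correction term is controlled only by (a constant times) $\norm{z}_1^{\Omega_n}$---the same order as the quantity you are trying to bound---so it can never be absorbed. Concretely, stabilize a given finite-type resolution by a redundant summand: set $\Q[\Omega_n]=\Q[\Omega'_n]\oplus\Q[G/K]$ and $\Q[\Omega_{n+1}]=\Q[\Omega'_{n+1}]\oplus\Q[G/K]$ with $\delta_{n+1}=\delta'_{n+1}\oplus\iid$ and $\delta_n=\delta'_n\oplus 0$, and take $\phi_\bullet$ to be the projection and $\psi_\bullet$ the inclusion (legitimate outputs of the comparison theorem, with $h_n(x,y)=(0,-y)$). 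For $0\neq y\in\Q[G/K]$ the cycle $z=(0,y)$ satisfies $\norm{z}_1^{\Omega_n}=\norm{y}_1>0$ and $\norm{z}_{\ker(\delta_n)}=\norm{y}_1>0$, while $\phi_n(z)=0$. So there is no ``bi-Lipschitz correspondence of ambient norms on cycles,'' and your fallback claim that Proposition~\ref{prop: bounded finitely generated} ``applied on both sides yields equivalence of the two filling norms'' fails as well: that proposition only gives the two one-sided bounds $\norm{\phi_n(z)}_{\ker(\delta'_n)}\preceq\norm{z}_{\ker(\delta_n)}$ and $\norm{\psi_n(w)}_{\ker(\delta_n)}\preceq\norm{w}_{\ker(\delta'_n)}$, and closing the loop would require comparing the filling norm of the correction $\delta_{n+1}h_n(z)$ (which is $\preceq\norm{z}_1^{\Omega_n}$) with filling norms of cycles---which is precisely the isoperimetric inequality you are trying to prove. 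The argument as written is circular.

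The repair, which is what the paper's proof actually does, is to make the argument conditional and asymmetric rather than to seek an unconditional norm correspondence. Assume the inequality holds for the primed resolution, with constant $C$. Given $\alpha\in\ker(\delta_n)$ and $\epsilon>0$, note $\phi_n(\alpha)\in\ker(\delta'_n)=\delta'_{n+1}(\Q[\Omega'_{n+1}])$ and choose $\beta$ with $\delta'_{n+1}(\beta)=\phi_n(\alpha)$ and $\norm{\beta}_1^{\Omega'_{n+1}}\leq\norm{\phi_n(\alpha)}_{\ker(\delta'_n)}+\epsilon$. The chain-map identity then produces an \emph{explicit filling} of $\alpha$: since $\psi_n\delta'_{n+1}=\delta_{n+1}\psi_{n+1}$, one gets $\alpha=\psi_n\phi_n(\alpha)-\delta_{n+1}h_n(\alpha)=\delta_{n+1}\bigl(\psi_{n+1}(\beta)-h_n(\alpha)\bigr)$, whence $\norm{\alpha}_{\ker(\delta_n)}\leq\norm{\psi_{n+1}(\beta)}_1^{\Omega_{n+1}}+\norm{h_n(\alpha)}_1^{\Omega_{n+1}}$. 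Now bound $\norm{\psi_{n+1}(\beta)}_1\preceq\norm{\beta}_1\leq\norm{\phi_n(\alpha)}_{\ker(\delta'_n)}+\epsilon$, and \emph{only at this point} invoke the assumed inequality on the primed side to get $\norm{\phi_n(\alpha)}_{\ker(\delta'_n)}\leq C\norm{\phi_n(\alpha)}_1^{\Omega'_n}\preceq\norm{\alpha}_1^{\Omega_n}$; together with $\norm{h_n(\alpha)}_1\preceq\norm{\alpha}_1^{\Omega_n}$ and letting $\epsilon\to 0$, this yields $\norm{\alpha}_{\ker(\delta_n)}\preceq\norm{\alpha}_1^{\Omega_n}$. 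The asymmetry is essential: the assumed inequality is what converts the filling norm of $\phi_n(\alpha)$ into an $\ell_1$-quantity, and the homotopy correction is then harmless not because it cancels, but because it is already of the order of the target bound.
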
	
\begin{proof}
Let $(\Q[{\Omega}_i],{\partial}_{i}), (\Q[\Lambda_i],\delta_{i})$ be a pair of proper permutation resolutions of $\Q$ which contain finitely generated modules for degrees $i=0,\dots,n+1$. Suppose $G$ satisfies the weak $n$-dimensional linear isoperimetric inequality with respect to the resolution $(\Q[\Lambda_i],\delta_{i})$. Hence there is $C>0$ such that
\begin{equation}\label{eq:isopLamda} \norm{x}_{\ker(\delta_{n})} \leq C \norm{x}^{\Lambda_n}_1.\end{equation}
 for all $x\in\ker(\delta_{n})$.

Since any two projective resolutions of $\Q$ are chain homotopy equivalent, there exist chain maps $f\colon (\Q[{\Omega}_i],{\partial}_{ i})\to (\Q[\Lambda_i],\delta_{i}))$ and $g \colon (\Q[\Lambda_i],\delta_{i}) \rightarrow (\Q[{\Omega}_i],{\partial}_{ i})$, and a 1-differential $h\colon (\Q[{\Omega}_i],{\partial}_{i}) \rightarrow (\Q[{\Omega}_i],{\partial}_{i}) $ such that 
	\begin{equation}\label{eq:homotopy}
\partial_{i+1} \circ h_i + h_{i-1} \circ \partial_i = g_i \circ f_i - \mathsf{Id}.
	\end{equation}
Diagrammatically, one has
\begin{equation}\label{eq:diag2}
\xymatrix{
	\cdots\ar[r]&\Q[\Omega_{n+1}]\ar[r]_-{\partial_{n+1}}\ar@<-2pt>[d]_-{f_{n+1}}&\Q[{\Omega}_{n}]\ar[r]_-{\partial_{n}}\ar@<-2pt>[d]_-{f_n} \ar@/^-3ex/[l]_-{h_{n}}&\Q[\Omega_{n-1}]\ar@<-2pt>[d]_-{f_{n-1}}\ar@/^-3ex/[l]_-{h_{n-1}}\ar[r]&\cdots\\
	\cdots\ar[r]&\Q[\Lambda_{n+1}]\ar[r]_-{\delta_{n+1}}\ar@<-2pt>[u]_-{g_{n+1}}&\Q[{\Lambda}_{n}]\ar[r]_-{\delta_{n}}\ar@<-2pt>[u]_-{g_n}&\Q[\Lambda_{n-1}]\ar@<-2pt>[u]_-{g_{n-1}}\ar[r]&\cdots
	}
\end{equation}

Since $g_{n+1},f_n$ and $h_n$ are morphisms between finitely generated discrete $\QG$-modules, Proposition~\ref{prop: bounded finitely generated} applies and, therefore, the constant $C$ defined above can be assumed to satisfy:
\begin{enumerate}[label={(D\arabic*)}]
\item $\norm{g_{n+1}(\lambda)}^{\Omega_{n+1}}_1\leq C \norm{\lambda}^{\Lambda_{n+1}}_1$, for all $\lambda\in\Q[\Lambda_{n+1}]$;
\item $\norm{f_{n}(\omega)}^{\Lambda_{n}}_1\leq C \norm{\omega}^{\Omega_{n}}_1$, for all $\omega\in\Q[\Omega_{n}]$; and
\item $\norm{h_{n}(\omega)}^{\Omega_{n+1}}_1\leq C \norm{\omega}^{\Omega_{n}}_1$, for all $\omega\in\Q[\Omega_{n}]$.
\end{enumerate}

We prove below that that there is a constant $D>0$ such that for any $\alpha\in\ker(\partial_{n})$ and $\epsilon>0$
\[ \| \alpha \|_{\ker(\partial_{n})} \leq D \| \alpha \|^{\Omega_n}_1 + D\epsilon .
 \]
Then it follows that $G$ satisfies the weak $n$-dimensional linear isoperimetric inequality with respect to the resolution $(\Q[\Omega_i],\partial_{i})$ by letting $\epsilon \to 0$.

Let $\alpha\in \ker(\partial_{n})$ and $\epsilon>0$. By the diagram~\eqref{eq:diag2}, it follows that $f_n(\alpha)\in\ker(\delta_{n})=\delta_{n+1}(\Q[\Lambda_{n+1}]).$ Since $\Q[\Lambda_{n+1}]$ is finitely generated, we can consider the filling-norm $\norm{\argu}_{\ker(\delta_{n})}$ to be induced by $\delta_{n+1}$. Therefore, by the definition of the filling norm $\norm{\argu}_{\ker(\delta_{n})}$ there is $\beta\in\Q[\Lambda_{n+1}]$ such that $\delta_{n+1}(\beta)=f_n(\alpha)$ and
\begin{equation}\label{eq:epsilon}
\norm{\beta}^{\Lambda_{n+1}}_1\leq\norm{f_n(\alpha)}_{\ker(\delta_{n})}+\epsilon.
\end{equation}
By evaluating $\alpha$ in Equation~\ref{eq:homotopy}, we can write
\begin{align}\label{eq:eq1}
 \alpha &= g_n(f_n(\alpha))-\partial_{n+1}(h_n(\alpha)) \\ \nonumber
 &= g_n(\delta_{n+1}(\beta))-\partial_{n+1}(h_n(\alpha))\\ \nonumber
 &=\partial_{n+1}\left (g_{n+1} (\beta )- h_n ( \alpha ) \right). \nonumber
 \end{align}
 Hence

\[
 \begin{array}{rll}
 \| \alpha \|_{\ker(\partial_{n})} & \leq \left \| g_{n+1}(\beta) - h_n(\alpha) \right \|^{\Omega_{n+1}}_1 & \text{ by \eqref{eq:eq1} and definition of filling norm } \\[2pt] 
 & \leq\left \| g_{n+1}(\beta) \right \|^{\Omega_{n+1}}_1 + \left \| h_n(\alpha ) \right \|^{\Omega_{n+1}}_1 & \\[2pt] 
 & \leq C \| \beta \|^{\Lambda_{n+1}}_1 + C \| \alpha \|^{\Omega_n}_1 & \text{ by inequalities (D1) and (D3) } \\[2pt] 
 & \leq C \|f_n(\alpha) \|_{\ker(\delta_n)} + C \epsilon + C \| \alpha \|^{\Omega_n}_1 &\text{ by inequality \eqref{eq:epsilon} }\\[2pt] 
 & \leq C ^2 \| f_n(\alpha) \|^{\Lambda_n}_1 + C \| \alpha \|^{\Omega_n}_1 + C \epsilon & \text{ by inequality~\eqref{eq:isopLamda} }\\[2pt] 
 & \leq C ^3 \| \alpha \|^{\Omega_n}_1 + C \| \alpha \|^{\Omega_n}_1 + C \epsilon & \text{ by inequality (D2). } \qedhere \nonumber
 \end{array} 
\]

\subsection{Weak $0$-Dimensional Linear Isoperimetric Inequality and Profinite Groups}

As previously mentioned, a group is profinite if and only if it is a compact TDLC-group~\cite[Proposition 0]{Se02}. The following statement is a simple application of the definitions of this section. 
\begin{prop}
Let $G$ be a TDLC-group. Then $G$ is compact if and only if it is compactly generated and satisfies a weak $0$-dimensional linear isoperimetric inequality.
\end{prop}

The only if direction of the proposition is immediate. Indeed, if $G$ is a compact TDLC-group, then the trivial $G$-module $\Q$ is projective in $\QGdis$. In this case, one can read the weak $0$-dimensional isoperimetric inequality from the resolution $0\to\Q\to\Q\to0$. 

For the rest of the section, suppose that $G$ is a TDLC-group satisfying a weak $0$-dimensional linear isoperimetric inequality. Let $\Gamma$ be a Cayley-Abels graph of $G$, let $\dist$ be the combinatorial path metric on the set of vertices $V$ of $\Gamma$, and let $E$ denote the set of edges of $\Gamma$. In order to prove that $G$ is profinite, it is enough to show that $V$ is finite. 

Choose an orientation for each edge of $\Gamma$ and consider the augmented rational cellular chain complex of $\Gamma$,
\[ \Q[E] \stackrel{\delta}{\to} \Q[V] \stackrel{\varepsilon}{\to} \Q \to 0 .\]
Since $\Gamma$ is Cayley-Abels graph, this is a partial proper permutation resolution. 

Following ideas in~\cite{FMP18}, define a partial order $\preceq$ on $\Q[E]$ as follows. For $\nu, \mu \in \Q[E]$, $\nu=\sum_{e\in E} t_e e$ and $\mu=\sum_{e\in E} s_e e$, then $\nu\preceq \mu$ if and only if $t_e^2\leq t_e s_e $ for every $e\in E$. Observe that if $\nu \preceq \mu$ then
$\|\mu\|_1^E=\|\mu-\nu\|_1^E+\|\nu\|_1^E$; in particular $\|\nu\|_1^E \leq \| \mu \|_1^E$. An element $\nu \in \Q[E]$ is called integral if $t_e \in \Z$ for each $e$. Define analogously $\preceq$ on $\Q[V]$. 

\begin{lem}\label{lem:FlemmingMP}
Suppose that $\mu \in \Q[E]$ is integral and $\delta(\mu)=m(v-u)$ where $u,v\in V$ and $m$ is a positive integer. Then there is an integral element $\nu\in \Q[E]$ such that $\delta(\nu)=v-u$ and $\nu \preceq \mu$ and $\|\nu\|^E_1\geq \dist(u,v)$. 
\end{lem}
\begin{proof}[Sketch of the proof]
Suppose $\mu=\sum_{e\in E} s_e e$. Consider a directed multigraph $\Xi$ (multiple edges between distinct vertices are allowed) with vertex set $V$ and such that for each $e\in E$
if $s_e\geq 0$ then there are $|s_e|$ edges from $a$ to $b$ where $\delta(e)=b-a$; and if $s_e<0$ then there are $|s_e|$ from $b$ to $a$. The degree sum formula for directed graphs implies that $u$ and $v$ are in the same connected component of $\Xi$. It is an exercise to show that there is a directed path $\gamma$ from $u$ to $v$ in $\Xi$ that can be assumed to be injective on vertices. The path $\gamma$ induces an element $\nu\in \Q[E]$ such that if $\nu=\sum_{e\in E} t_e e$ then $t_e=\pm 1$ and $\nu \preceq \mu$. Moreover $\gamma$ induces a path in $\Gamma$ from $u$ to $v$ and hence $\|\nu\|_1^E\geq \dist(u,v)$.
\end{proof}

Suppose, for a contradiction, that $V$ is an infinite set. Fix $v_0\in V$. For every $n\in\N$, let $v_n\in V$ such that $\dist(v_0,v_n)\geq n$. Note that such a vertex $v_n$ always exists since $\Gamma$ is locally finite and connected. Let $\alpha_n = v_n-v_0$ and observe that $\alpha_n \in \ker(\varepsilon)$ and $\|\alpha_n\|_1^V = 2$. We will show that $\| \alpha_n \|_{\ker(\varepsilon)} \geq n$ for every $n$, and hence $G$ cannot satisfy a weak $0$-dimensional linear isoperimetric inequality. Fix $n\in \N$, and let $\mu=\sum_{e\in E} s_e e \in \Q[E]$ such that $\delta(\mu)=\alpha_n=v_n-v_0$.Then there is $m\in \N$ such that $m\mu$ is integral. Since $\delta(m\mu)=m(v_n-v_0)$, Lemma~\ref{lem:FlemmingMP} implies that there is $\nu_1 \in \Q[E]$ such that $\delta(\nu_1)=v_n-v_0$ and $\nu_1 \preceq m\mu$ and $\|\nu\|_1\geq \dist(v_0,v_n)$. Let $\mu_1=m\mu - \nu_1$ and note that $\mu_1$ is integral, $\delta(\mu_1) = (m-1)(v_n-v_0)$, and 
\[ \| m \mu \|_1^E = \| \mu_1 \|_1^E + \| \nu_1 \|_1^E \geq \| \mu_1 \|_1^E + \dist(v_0,v_n).\]
An induction argument on $m$ then proves that $\| m \mu \|_1^E \geq m \dist(v_0,v_n)$ and hence $\| \mu \|_1^E \geq \dist(v_0,v_n)$. Since $\mu$ was an arbitrary element such that $\delta(\mu)=\alpha_n$, it follows tha that $\|\alpha_n\|_{\kernel{\delta}} \geq \dist(v_0,v_n)\geq n$. 
\end{proof}
%%%%%%%%%%%%%%
\section{Proof of Subgroup Theorem} \label{sec:proof1}

The proof of the theorem relies on the following lemma. Let $G$ be a TDLC-group of type $\FP_n$ and $H$ a closed subgroup of $G$ of type $\FP_n$.

\begin{lem}
\label{lem:res}
There are partial proper permutation resolutions $$\Q[\Omega_n] \xrightarrow{\delta_n} \Q[\Omega_{n-1}] \to \cdots \to \Q[\Omega_0] \to \Q \to 0,$$ $$\Q[\Sigma_n] \xrightarrow{\partial_n} \Q[\Sigma_{n-1}] \to \cdots \to \Q[\Sigma_0] \to \Q \to 0$$ of $\Q$ in $\QHdis$ and $\QGdis$ respectively, satisfying the following properties.
\begin{enumerate}
\item $\Omega_0,\ldots,\Omega_n$ are finitely generated $H$-sets;
\item $\Sigma_0,\ldots,\Sigma_n$ are finitely generated $G$-sets;
\item restricting the $G$-action on each $\Sigma_i$ to $H$, $\Omega_i$ is an $H$-subset of $\Sigma_i$ via $\iota_i: \Omega_i \to \Sigma_i$;
\item the diagram
\[\xymatrix{
\ker(\delta_n) \ar[r] \ar[d] & \Q[\Omega_n] \ar[r]^{\delta_n} \ar[d]^{\Q[\iota_n]} & \cdots \ar[r] & \Q[\Omega_0] \ar[r]^{\delta_0} \ar[d]^{\Q[\iota_0]} & \Q \ar[r] \ar@{=}[d] & 0 \\
\ker(\partial_n) \ar[r] & \Q[\Sigma_n] \ar[r]^{\partial_n} & \cdots \ar[r] & \Q[\Sigma_0] \ar[r]^{\partial_0} & \Q \ar[r] & 0
}\]
of $\Q[H]$-modules commutes;
\item $\coker(\ker(\delta_n) \to \ker(\partial_n))$ is a projective $\QH$-module.
\end{enumerate}
\end{lem}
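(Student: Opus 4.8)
The plan is to build the two resolutions simultaneously by induction on the degree, realizing the $H$-resolution as a proper permutation subcomplex sitting inside the restriction of the $G$-resolution. The two finiteness hypotheses feed in separately. Type $\FP_n$ for $G$ guarantees, by the standard dimension-shifting argument, that at each stage the $G$-syzygy $\ker(\partial_i)\subseteq\Q[\Sigma_i]$ is a finitely generated discrete $\QG$-module, so it can be covered by a finitely generated proper permutation $\QG$-module. Type $\FP_n$ for $H$ guarantees, by the same argument (with $\Q[\Omega_0],\dots,\Q[\Omega_i]$ finitely generated projective in $\QHdis$ and $H$ of type $\FP_{i+1}$), that the $H$-syzygy $\ker(\delta_i)=\Q[\Omega_i]\cap\ker(\partial_i)$ is finitely generated in $\QHdis$. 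Throughout I use that restriction of scalars sends proper permutation $\QG$-modules to proper permutation $\QH$-modules and is exact, so $\rst^G_H\Q[\Sigma_\bullet]$ is an exact complex of proper permutation $\QH$-modules, though not of finite type over $H$.

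For the inductive step, suppose $\Omega_j\subseteq\Sigma_j$ have been constructed for $j\leq i$ with the commuting square of (4) and exactness in degrees below $i$. I first choose a finitely generated proper permutation $\QH$-module $\Q[\Omega_{i+1}]$ surjecting onto $\ker(\delta_i)$; this defines $\delta_{i+1}$ and secures $H$-exactness at degree $i$. Crucially, I arrange the point-stabilizers of $\Omega_{i+1}$ to have the form $H\cap U$ for compact open $U\leq G$, which is possible because every compact open subgroup of $H$ contains one of this form. Then $\Omega_{i+1}=\bigsqcup_s H/(H\cap U_s)$ embeds as an $H$-subset of the restriction of the proper $G$-set $\bigsqcup_s G/U_s$. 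One enlarges this by a finitely generated proper permutation $\QG$-module mapping onto $\ker(\partial_i)$, obtaining $\Sigma_{i+1}$ and forcing $G$-exactness at degree $i$, and then seeks $\partial_{i+1}$ restricting to $\Q[\iota_i]\circ\delta_{i+1}$ on the summand $\Q[\Omega_{i+1}]$.

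I expect this last extension to be the main obstacle, and it is precisely where the hypothesis that $H$ is merely closed, rather than open, bites. When $H$ is open, proper $H$-sets induce to proper $G$-sets and $H$-equivariant maps extend to $G$-equivariant ones by Frobenius reciprocity, so the step is automatic; for a general closed $H$ the prescribed boundary $c_s$ of an orbit generator is only invariant under $H\cap U_s$ and need not be $U_s$-invariant, so a $G$-equivariant differential with the required restriction does not come for free. The heart of the argument is therefore to build $\Sigma_{i+1}$ with enough room and to choose $\partial_{i+1}$ by hand so that both the commuting square in (4) and $G$-exactness hold while keeping $\Sigma_{i+1}$ finitely generated over $G$; this is the analogue, in our setting, of the technical difficulties flagged in Remark~\ref{rem:openvsclosed}.

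Finally, property (5) should follow formally once the construction is in place. Since each $\Omega_i$ is an $H$-subset of $\Sigma_i$, we have $\rst^G_H\Q[\Sigma_i]=\Q[\Omega_i]\oplus\Q[\Sigma_i\setminus\Omega_i]$ in $\QHdis$, so the inclusion of complexes has quotient $Q_\bullet$ with $Q_i=\Q[\Sigma_i\setminus\Omega_i]$ a proper permutation, hence projective, $\QH$-module, and with $0$ in the $\Q$-spot since the two augmentations agree. The long exact homology sequence of $0\to\Q[\Omega_\bullet]\to\rst^G_H\Q[\Sigma_\bullet]\to Q_\bullet\to 0$, together with exactness of both outer complexes in degrees $\leq n-1$, shows that $Q_\bullet$ is exact in degrees $\leq n-1$ and yields the short exact sequence $0\to\ker(\delta_n)\to\ker(\partial_n)\to H_n(Q_\bullet)\to 0$, identifying $\coker(\ker(\delta_n)\to\ker(\partial_n))$ with $H_n(Q_\bullet)$. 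As $Q_\bullet$ is a length-$n$ complex of projective $\QH$-modules that is exact below the top degree, splitting the exact sequence $0\to H_n(Q_\bullet)\to Q_n\to\cdots\to Q_0\to 0$ one step at a time exhibits $H_n(Q_\bullet)$ as a direct summand of $Q_n$, hence projective, which is precisely (5).
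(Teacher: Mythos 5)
Your proposal is not a proof: its central step is explicitly left open. Having chosen $\Q[\Omega_{i+1}]\twoheadrightarrow\ker(\delta_i)$ first and embedded $\Omega_{i+1}=\coprod_s H/(H\cap U_s)$ into the $G$-set $\coprod_s G/U_s$, you must produce a $G$-equivariant $\partial_{i+1}$ whose restriction to $\Q[\Omega_{i+1}]$ is $\Q[\iota_i]\circ\delta_{i+1}$; you yourself observe that the prescribed boundary $c_s$ is only $(H\cap U_s)$-invariant, so no such $G$-map need exist, and you then defer the resolution of this as ``the heart of the argument'' without supplying it. That deferred step \emph{is} the lemma. Note also that the obstruction is an artifact of the order of your choices: if you instead fix a finite generating set $x_1,\dots,x_k$ of $\ker(\delta_i)$ first and set $c_s=\Q[\iota_i](x_s)\in\ker(\partial_i)$, then, because $\Q[\Sigma_i]$ is a \emph{discrete} $\QG$-module, the stabilizer $G_{c_s}$ is open, so van Dantzig's Theorem provides a compact open subgroup $U_s\leq G_{c_s}$; with $U_s$ chosen \emph{after} $c_s$, the assignment $gU_s\mapsto g\cdot c_s$ is a well-defined $G$-map, $\Omega_{i+1}:=\coprod_s H/(H\cap U_s)$ embeds $H$-equivariantly in $\coprod_s G/U_s$ by $h(H\cap U_s)\mapsto hU_s$, and adjoining a finitely generated proper permutation cover of $\ker(\partial_i)$ (available since $G$ has type $\FP_n$) yields $\Sigma_{i+1}$ and $\partial_{i+1}$ satisfying (1)--(4). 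Your text does not contain this reordering, and without it the induction halts.

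For comparison, the paper sidesteps the extension problem entirely by running the construction in the opposite direction. The $G$-resolution $\Q[\Sigma_\bullet]$ is fixed once and for all (using only that $G$ has type $\FP_n$), and the $H$-row is carved out of its restriction: the inductive hypothesis (5) at stage $n-1$ says $\coker(\iota)$ is projective, hence the map $\iota\colon\ker(\delta_{n-1})\to\ker(\partial_{n-1})$ admits a retraction $\pi$; a finite generating set $x_1,\dots,x_k$ of $\ker(\delta_{n-1})$ (finite by type $\FP_n$ of $H$) is lifted through the surjection $\pi\circ\partial_n$ to elements $y_i\in\Q[\Sigma_n]$; $\Omega_n$ is taken to be the $H$-subset of $\Sigma_n$ generated by the supports of the $y_i$; and the $H$-differential is \emph{defined} as $\delta_n:=\pi\circ\partial_n\circ\Q[\iota_n]$. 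Thus the $H$-differential is harvested from the $G$-differential rather than prescribed independently, no $H$-to-$G$ extension is ever needed, and condition (5) --- re-established at each stage via the $3\times 3$ diagram of cokernels --- is the engine of the induction, not an afterthought. One positive remark: your derivation of (5) from (1)--(4), identifying $\coker(\ker(\delta_n)\to\ker(\partial_n))$ with $H_n(Q_\bullet)$ for the quotient complex $Q_\bullet$ of proper permutation $\QH$-modules and then splitting it off $Q_n$, is correct and is a clean alternative to the paper's cokernel argument; but it only becomes relevant once (1)--(4) have actually been constructed.
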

\begin{proof}
Take a partial proper permutation resolution $$\Q[\Sigma_n] \xrightarrow{\partial_n} \Q[\Sigma_{n-1}] \to \cdots \to \Q[\Sigma_0] \to \Q \to 0$$ of $\Q$ in $\QGdis$. We construct the required resolution $$\Q[\Omega_n] \xrightarrow{\delta_n} \Q[\Omega_{n-1}] \to \cdots \to \Q[\Omega_0] \to \Q \to 0$$ in $\QHdis$ by induction on $n$. So suppose we have already constructed a diagram
\[\xymatrix{
\ker(\delta_{n-1}) \ar[r] \ar[d] & \Q[\Omega_{n-1}] \ar[r]^{\delta_{n-1}} \ar[d]^{\Q[\iota_{n-1}]} & \cdots \ar[r] & \Q[\Omega_0] \ar[r]^{\delta_0} \ar[d]^{\Q[\iota_0]} & \Q \ar[r] \ar@{=}[d] & 0 \\
\ker(\partial_{n-1}) \ar[r] & \Q[\Sigma_{n-1}] \ar[r]^{\partial_{n-1}} & \cdots \ar[r] & \Q[\Sigma_0] \ar[r]^{\partial_0} & \Q \ar[r] & 0
}\]
satisfying the conditions for $n-1$ (this is trivial for the base case $n=0$).

Write $\iota$ for the induced map $\ker(\delta_{n-1}) \to \ker(\partial_{n-1})$; by hypothesis, there is a map $\pi: \ker(\partial_{n-1}) \to \ker(\delta_{n-1})$ such that $\pi\iota$ is the identity on $\ker(\delta_{n-1})$. Since $H$ has type $\FP_n$, $\ker(\delta_{n-1})$ is finitely generated; pick a finite generating set $x_1,\ldots,x_k$ and pick a preimage $y_i$ of each element $x_i$ in $\Q[\Sigma_n]$, via the map $\Q[\Sigma_n] \xrightarrow{\partial_n} \ker(\partial_{n-1}) \xrightarrow{\pi} \ker(\delta_{n-1})$. Each $y_i$ is a finite sum $\sum_{j=1}^{j_i} a_{ij}\alpha_{ij}$ with $\alpha_{ij} \in \Sigma_n$ and $a_{ij} \in \Q$. Now let $\Omega_n$ be the (finitely generated) $H$-subset of $\Sigma_n$ generated by the $\alpha_{ij}$. We get an induced map $\pi \partial_n \Q[\iota_n]: \Q[\Omega_n] \to \ker(\delta_{n-1})$ extending the commutative diagram as required; it only remains to check condition 5.

To see this, consider the following commutative diagram in $\QHdis$
\begin{equation}\nonumber
\xymatrix{
& 0 \ar[d] & 0 \ar[d] & 0 \ar[d] \\
0 \ar[r] & \ker(\delta_n) \ar[d]^{\iota'} \ar[r] & \Q[\Omega_n] \ar[d]^{\Q[\iota_n]} \ar[r]^{\delta_n} & \ker(\delta_{n-1}) \ar[r] \ar[d]^{\iota} & 0 \\
0 \ar[r] & \ker(\partial_n) \ar[r] \ar[d] & \Q[\Sigma_n] \ar[r]^{\partial_n} \ar[d] & \ker(\partial_{n-1}) \ar[r] \ar[d] & 0 \\
0 \ar[r] & \coker(\iota') \ar[r] \ar[d] & \coker(\Q[\iota_n]) \ar[r] \ar[d] & \coker(\iota) \ar[r] \ar[d] & 0 \\
& 0 & 0 & 0.}
\end{equation}
Note that the diagram consists of exact rows and exact columns. Since $\Q[\Omega^H_n]$ is a direct summand of $\Q[\Sigma^G_n]$ in $\QHdis$, it follows that each $\coker(\Q[\iota_n])$ is projective; $\coker(\iota)$ is projective by hypothesis. Then exactness of the bottom row implies that $\coker(\iota')$ is projective.
\end{proof}

\begin{rem}\label{rem:openvsclosed}
In $\QGdis$, it is possible to develop a homological mapping cylinder
argument analogous to~\cite[Proposition 4.1]{AMP19} that yields a similar conclusion to Lemma \ref{lem:res} but only for open subgroups of $G$. This argument was developed in a preliminary version of this article.
\end{rem}

\begin{proof}[Proof of Theorem~\ref{thm:mainineq1}]
Since $G$ and $H$ have type $\FP_n$, we may use the partial proper permutation resolutions described in Lemma \ref{lem:res}; we keep the notation from there. Because $G$ has type $\FP_{n+1}$ and $\ccd_\Q(G) = n+1$, $\ker(\partial_n)$ is finitely generated (in $\QGdis$) and projective; because $H$ has type $\FP_{n+1}$ and $\coker(\iota')$ is projective, $\ker(\delta_n)$ is a finitely generated (in $\QHdis$) summand of $\ker(\partial_n)$. So:
\begin{enumerate}
\item $\|\argu\|_{\ker(\delta^H_n)}\sim\|\argu\|_{\ker(\partial^G_n)}$
on the elements of $\ker(\delta^H_n)$, by Proposition \ref{Prop:normHG};
\item $\|\argu\|^{\Omega^H_n}_1\sim\|\argu\|^{\Sigma^G_n}_1$ on the elements of $\Q[\Omega^H_n]$, because $\Omega_n$ is a subset of $\Sigma_n$;
\item $\|\argu\|_{\ker(\partial^G_n)}\sim \|\argu\|^{\Sigma_n^G}_1$ on the elements of $\ker(\partial^G_n)$, because $G$ satisfies the weak $n$-dimensional linear isoperimetric inequality.
\end{enumerate}
Therefore $ \|\argu\|_{\ker(\delta^H_n)}\sim \|\argu\|^{\Omega^H_n}_1$ on the elements of $\ker(\delta^H_n)$, i.e. $H$ satisfies the weak $n$-dimensional isoperimetric inequality.
\end{proof}

\section{Weak linear isoperimetric inequality and hyperbolicity} \label{sec:proof2}

The notion of linear isoperimetric inequality was used to characterise discrete hyperbolic groups by Gersten \cite{Ge96a}. Different generalizations of Gersten's result have been presented by various authors; see for example~\cite{GrMa08},~\cite{Mi02} and \cite{HM16}. In particular, Manning and Groves \cite{GrMa08} reformulated Gersten's argument to provide a homological characterization of simply connected hyperbolic $2$-complexes by means of a {\it homological isoperimetric inequality}. Here we use results from~\cite{GrMa08} to provide an analogue characterization of hyperbolic TDLC-groups. 

Let $X$ be a complex with $i$-skeleton denoted by $X^{(i)}$. Consider the cellular chain complex $(C_\bullet (X,\Q),\partial_\bullet)$ of $X$ with rational coefficients. 
Each vector space $C_i(X,\Q)$ is $\Q$-spanned by the collection of $i$-cells $\sigma$ of $X$. An $i$-chain $\alpha$ is a formal linear combination $\sum_{\sigma\in X^{(i)}} r_\sigma\sigma$
where $r_\sigma \in \Q$. The $\ell_1$-norm on $C_i(X,\Q)$ is defined as 
\[ \|\alpha\|^{X,i}_1= \sum |r_\sigma |,\]
where $|\argu|$ denotes the absolute value function on $\Q$.

\begin{defn}[ \protect{\cite[Definition 2.18]{GrMa08}} Combinatorial path]
Let $X$ be a complex. Suppose $I$ is an interval with a cellular structure. A {\em combinatorial path} $I \to X^{(1)}$ is a cellular path sending $1$-cells to either $1$-cells or $0$-cells. A {\em combinatorial loop} is a combinatorial path with equal endpoints. 
\end{defn}

From here on, to simplify notation, the $1$-chain induced by a combinatorial loop $c$ in $X$ is denoted by $c$ as well. 

\begin{defn}[\protect{\cite[Definition 2.28]{GrMa08}} Linear Homological isoperimetric inequality] \label{defn:manningisoperimetric}
Let $X$ be a simply connected complex. We say that $X$ satisfies the {\em linear homological isoperimetric inequality} if there is a constant $K\geq 0$ such that for any combinatorial loop $c$ in $X$ there is some $\sigma \in C_2(X, \Q)$ with $\partial(\sigma)=c$ satisfying 
\begin{equation}\|\sigma\|^{X,2}_1\leq K \|c\|^{X,1}_1.\end{equation}
\end{defn}

\begin{defn}
Let $G$ be a compactly presented TDLC-group. There exists a simply connected $G$-complex $X$ with compact open cell stabilizers, the $2$-skeleton $X^{(2)}$ is compact modulo $G$, the $G$-action is cellular and an element in $G$ fixing a cell setwise fixes it already pointwise. The group $G$ satisfies the \emph{linear homological isoperimetric inequality} if $X$ does. 
\end{defn}

The above definition is independent of the choice of $X$ as a consequence of Proposition~\ref{prop:AlgebraicDef}, the fact that a compactly presented TDLC-group has type $\FP_2$, and the following statement.

\begin{prop}\label{prop:hom=iso}
Suppose $G$ is a compactly presented TDLC-group and $X$ is a topological model of $G$ of type $\F_2$. Then $G$ satisfies the weak linear isoperimetric inequality if and only if $X$ satisfies the linear homological isoperimetric inequality. 
\end{prop}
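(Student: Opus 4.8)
The plan is to realize both conditions on a single chain complex coming from $X$ and then reduce the gap between them to a purely combinatorial decomposition of cycles into loops. First I would observe that the augmented rational cellular chain complex
\[
\Q[X^{(2)}] \xrightarrow{\partial_2} \Q[X^{(1)}] \xrightarrow{\partial_1} \Q[X^{(0)}] \xrightarrow{\varepsilon} \Q \to 0
\]
is a partial proper permutation resolution of finite type: the cell stabilizers are compact open, so each $\Q[X^{(i)}]=C_i(X,\Q)$ is a proper permutation $\QG$-module; there are finitely many $G$-orbits of cells in dimensions $\le 2$, so these modules are finitely generated; and $X$ is simply connected, so $H_1(X,\Q)=0$ and $H_0(X,\Q)=\Q$, giving exactness at $\Q[X^{(1)}]$ and $\Q[X^{(0)}]$. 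By Proposition~\ref{prop:AlgebraicDef} the weak linear isoperimetric inequality for $G$ may be tested on this resolution, so it reads: there is $C>0$ with $\|z\|_{\ker(\partial_1)}\le C\|z\|_1^{X,1}$ for every $1$-cycle $z\in\ker(\partial_1)$, where $\|z\|_{\ker(\partial_1)}=\inf\{\|\sigma\|_1^{X,2}:\partial_2\sigma=z\}$. The linear homological isoperimetric inequality is the same inequality imposed only on those cycles $z=c$ arising from combinatorial loops. Thus the two statements differ solely in whether the bound is required for all $1$-cycles or only for loops.

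For the forward implication (weak $\Rightarrow$ homological) I would specialize. Given a combinatorial loop $c$, it is a $1$-cycle, so $\|c\|_{\ker(\partial_1)}\le C\|c\|_1^{X,1}$; since the filling norm is an infimum, for any $\epsilon>0$ there is $\sigma\in C_2(X,\Q)$ with $\partial_2\sigma=c$ and $\|\sigma\|_1^{X,2}\le\|c\|_{\ker(\partial_1)}+\epsilon$. Taking $\epsilon=\|c\|_1^{X,1}$ yields $\|\sigma\|_1^{X,2}\le(C+1)\|c\|_1^{X,1}$ (note that $\|c\|_1^{X,1}\ge 1$ whenever $c\neq 0$, since the induced chain is integral, while $c=0$ is handled by $\sigma=0$), so $X$ satisfies the linear homological isoperimetric inequality with $K=C+1$.

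The substantive direction is homological $\Rightarrow$ weak, where I must upgrade the bound from loops to arbitrary $1$-cycles. By absolute homogeneity of both $\|\argu\|_1^{X,1}$ and the filling norm it suffices to treat an integral cycle $z=\sum_e t_e e\in\ker(\partial_1)$ (clear denominators and divide back). I would then decompose $z$ into combinatorial loops without cancellation, using the degree-sum argument of Lemma~\ref{lem:FlemmingMP}: form the directed multigraph with $|t_e|$ parallel copies of each edge $e$, oriented by the sign of $t_e$; the condition $\partial_1 z=0$ says in-degree equals out-degree at every vertex, so this balanced multigraph decomposes into edge-disjoint directed simple cycles $c_1,\dots,c_r$, each a combinatorial loop. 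Because the decomposition is edge-disjoint and sign-consistent, $z=\sum_i c_i$ with $\|z\|_1^{X,1}=\sum_i\|c_i\|_1^{X,1}$. Applying the homological isoperimetric inequality to each $c_i$ gives $\sigma_i$ with $\partial_2\sigma_i=c_i$ and $\|\sigma_i\|_1^{X,2}\le K\|c_i\|_1^{X,1}$; then $\sigma=\sum_i\sigma_i$ satisfies $\partial_2\sigma=z$ and $\|\sigma\|_1^{X,2}\le K\sum_i\|c_i\|_1^{X,1}=K\|z\|_1^{X,1}$, whence $\|z\|_{\ker(\partial_1)}\le K\|z\|_1^{X,1}$. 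This establishes undistortedness of $\ker(\partial_1)$ in $\Q[X^{(1)}]$, i.e. the weak linear isoperimetric inequality.

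The main obstacle is precisely the cancellation-free decomposition of an integral $1$-cycle into combinatorial loops: one needs not merely $z=\sum_i c_i$ but the sharp additivity $\|z\|_1^{X,1}=\sum_i\|c_i\|_1^{X,1}$, which is what keeps the summed filling bound linear in $\|z\|_1^{X,1}$ rather than losing a factor equal to the number of loops. This is secured by taking an \emph{edge-disjoint} cycle decomposition of the balanced multigraph, so that each edge is traversed with multiplicity and sign matching $t_e$; it is the directed-graph analogue of the extraction carried out in Lemma~\ref{lem:FlemmingMP}. Once this combinatorial input is in place, the remaining steps are routine norm estimates.
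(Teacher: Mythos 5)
Your proof is correct and follows essentially the same route as the paper's: for the forward direction you specialize the one-sided filling bound to combinatorial loops exactly as the paper does, and for the converse you clear denominators and decompose an integral $1$-cycle as a norm-additive (cancellation-free) sum of loops, apply the loop inequality to each piece, and sum. The only difference is one of packaging: where you prove the edge-disjoint loop decomposition inline via the balanced directed multigraph argument (in the spirit of Lemma~\ref{lem:FlemmingMP}), the paper simply cites the same statement from Gersten~\cite[Lemma A.2]{Ge98}, so your version is self-contained but not substantively different.
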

\begin{proof}
The augmented cellular chain complex $(C_\bullet(X, \Q), \partial_\bullet)$ of $X$ is a proper partial permutation resolution of $\Q$ of type $\FP_2$. The module $C_i(X,\Q)$ is a proper permutation module and we can take as its filling norm $\norm{\argu}_{C_i}$ the $\ell_1$-norm induced by $G$-set of $i$-cells.

The weak linear isoperimetric inequality means that the filling norm $\norm{\argu}_{Z_1}$ of $Z_1(X,\Q)$ is equivalent to the restriction of $\norm{\argu}_{C_1}$ to $Z_1(X,\Q)$. Hence there is a constant $C>0$ such that $\norm{\argu}_{Z_1} \leq C\norm{\argu}_{C_1}$ on $Z_1(X,\Q)$. To prove the linear homological isoperimetric inequality is enough consider non-trivial combinatorial loops, the inequality is trivial otherwise. Let $c$ be a non-trivial combinatorial loop and let $\mu \in C_2(X)$ such that $\partial \mu = c$ and $\norm{\mu}_{C_2} \leq \norm{c}_{Z_1}+1$. In particular, $\norm{\mu}_{C_2} \leq \norm{c}_{Z_1}+\norm{c}_{C_1}$, since $\norm{c}_{C_1}$ is a positive integer. It follows that $\norm{\mu}_{C_2} \leq (C+1)\norm{c}_{C_1}$ for any non-trivial combinatorial loop.

Conversely, suppose that $X$ satisfies the linear homological isoperimetric inequality for a constant $C$.
Let $\gamma \in Z_1(X,\Q)$.  Then the filling norm on $\gamma \in Z_1(X, \Q)$ is given by 
$\norm{\gamma}_{Z_1} = \inf \{ \norm{\mu}_{C_2} \colon \mu\in C_2(X,\Q), \partial \mu =\gamma \}$.  There is an integer $m$ such that $m\gamma$ is an integer cycle.  Then $m\gamma= c_1+c_2+\cdots +c_k$ where each $c_i$ is a cycle induced by a combinatorial loop, and $\norm{m\gamma}_{C_1}=\sum \norm{c_i}_{C_1}$, see~\cite[Lemma A.2]{Ge98}. Then there are $2$-cycles $\sigma_i \in C_2(X,\Q)$ such that $\partial \sigma_i = c_i$ and $\norm{\sigma_i}_{C_2} \leq C \norm{c_i}_{C_1}$.  It follows that 
\[ \norm{m\gamma}_{Z_1} \leq  \norm{ \sum_i \sigma_i}_{C_2} \leq  \sum_i \norm{ \sigma_i}_{C_2} \leq  C \sum_i  \norm{c_i}_{C_1}  = C \norm{m\gamma}_{C_1}. \]
Since both $\norm{\argu}_{Z_1}$ and $\norm{\argu}_{C_1}$ are homogeneous (see (N2) in Section~\ref{sec:weakiso}), the previous inequality implies that 
  $\norm{\argu}_{Z_1} \leq C \norm{\argu}_{C_1}$ on $Z_1(X,\Q)$. On the other hand, since the inclusion $Z_1(X,\Q) \hookrightarrow C_1(X,\Q)$ is bounded, there is another constant $C'$ such that $\norm{\argu}_{C_1} \leq C' \norm{\argu}_{Z_1}$ on $Z_1(X,\Q)$. Therefore the norms $\norm{\argu}_{Z_1}$ and $\norm{\argu}_{C_1}$ are equivalent on $Z_1(X, \Q)$.
\end{proof}

Below we recall a characterization of hyperbolic simply connected 2-complexes from~\cite{GrMa08}.
 
\begin{prop}\cite[Proposition 2.23, Lemma 2.29, Theorem 2.30 ]{GrMa08}\label{thm:manning1} \label{thm:manning2}
Let $X$ be a simply connected $2$-complex. 
\begin{enumerate}
\item If $X^{(1)}$ is hyperbolic, then $X$ satisfies the linear homological isoperimetric inequality.
\item If there is a constant $M$ such that the attaching map for each $2$-cell in $X$ has length at most $M$, and $X$ satisfies a linear homological isoperimetric inequality; then $X^{(1)}$ is hyperbolic.
\end{enumerate}
\end{prop}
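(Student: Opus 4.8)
The statement is imported verbatim from Groves--Manning, so the plan is to reconstruct its proof from the geometry of $\delta$-hyperbolic graphs rather than treat it as a black box. Throughout I would fix $\delta$ so that $X^{(1)}$ is $\delta$-hyperbolic (for (1)) and the bound $M$ on attaching maps (for (2)), and allow all constants to depend on $\delta$, on $M$, and on the supremum of the $\ell_1$-areas needed to fill $1$-cycles of uniformly bounded length. This last quantity is the one external input: it is finite under the standing hypotheses in force in Groves--Manning and, in the applications of the present article, because $X$ is a cocompact model and hence has only finitely many orbit-types of bounded subcomplexes.

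For part (1) the plan is a \emph{homological coning} argument arranged so that cancellation keeps the total area linear. First I would fix a Mineyev-type homological bicombing $q$: to each ordered pair of vertices $(a,b)$ it assigns a $1$-chain $q(a,b)$ with $\partial q(a,b)=b-a$ and $\|q(a,b)\|_1\le C\,\dist(a,b)$, and such that for all $a,b,c$ the ``homological triangle'' $q(a,b)+q(b,c)+q(c,a)$ bounds a $2$-chain of norm at most a uniform constant $C'$; this last property is exactly where $\delta$-hyperbolicity (thin triangles, fellow-travelling geodesics) is used, together with the input that bounded cycles bound bounded chains. Given a combinatorial loop $c=e_1\cdots e_n$ with vertices $v_0,\dots,v_n=v_0$, I would set $T_i=q(v_0,v_{i-1})+e_i-q(v_0,v_i)$, so that the telescoping identity gives $c=\sum_{i=1}^n T_i$ as $1$-chains. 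Rewriting $e_i$ against $q(v_{i-1},v_i)$ up to a bounded cycle and applying the triangle-filling property, each $T_i$ is filled by a $2$-chain of uniformly bounded norm, whence $\|\sigma\|_1\le Kn=K\|c\|_1$. The point of routing through the bicombing, rather than coning each edge to a fixed geodesic tree, is that the long ``bigon'' portions cancel homologically and only a bounded thin-triangle correction survives per edge, which is what makes the estimate linear rather than quadratic.

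For part (2) the plan is to run the classical ``linear isoperimetric inequality implies hyperbolic'' argument, using the bound $M$ to pass from the homological (rational-coefficient, cancellation-allowed) filling to a genuine slimness statement for $X^{(1)}$. I would take a geodesic triangle in $X^{(1)}$ with sides $\alpha,\beta,\gamma$ and view $\alpha+\beta+\gamma$ as a combinatorial loop $c$; the hypothesis produces $\sigma\in C_2(X,\Q)$ with $\partial\sigma=c$ and $\|\sigma\|_1\le K\|c\|_1$. Because every attaching map has length at most $M$, this bounded-area filling can be converted into a bounded transverse distance between the sides: via a combinatorial Gauss--Bonnet / ``no fat bigons'' estimate one shows that a bounded-area filling of a geodesic bigon forces its two sides to fellow-travel, and the same estimate applied to triangles yields a uniform $\delta'$-thin-triangles condition. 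Hyperbolicity of $X^{(1)}$ then follows from the standard characterisation of hyperbolic geodesic spaces by thin triangles.

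The step I expect to be the main obstacle is the passage in part (2) from the \emph{homological} isoperimetric bound to honest thinness of geodesic triangles. Rational-coefficient fillings permit cancellation that genuine disk fillings do not, so a priori a small-norm $2$-chain need not reflect a small-area homotopy; the bound $M$ on attaching maps is precisely what controls this discrepancy, and making that control quantitative — turning $\|\sigma\|_1\le K\|c\|_1$ into a uniform fellow-travelling constant — is the technical heart of Lemma~2.29 and Theorem~2.30 of Groves--Manning. In part (1) the corresponding delicate point is linearity: the naive edge-by-edge cone gives only a quadratic bound, and it is the homological cancellation of the bigon parts, encoded in the bounded-triangle property of the bicombing, that restores the linear estimate.
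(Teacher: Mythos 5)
This proposition is nowhere proved in the paper: it is imported by citation, and \cite{GrMa08} is the entire argument, so there is no in-paper proof to compare yours against; the only fair benchmark is the cited proofs themselves. Measured against those, your plan has the right architecture but black-boxes both hard steps. For part (1), the bicombing route is a legitimate known proof, but the bounded-triangle property of a Mineyev-type bicombing $q$ is itself a substantial theorem, so invoking it concentrates rather than resolves the difficulty; the more elementary argument, and the one closer to \cite{GrMa08}, decomposes the loop directly: a sufficiently long loop cannot be a $k$-local geodesic for $k = k(\delta)$, so one repeatedly replaces a non-geodesic subsegment of length at most $k$ by a geodesic, splitting off a cycle of length at most $2k$ while strictly shortening the remainder, which writes $c$ as a sum of linearly many uniformly short cycles with no bicombing at all. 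For part (2), your skeleton is the Gersten--Papasoglu route and is sound, with two caveats: uniformly thin geodesic bigons already imply hyperbolicity of a graph by Papasoglu's criterion, so the extra pass through triangles is redundant; and ``combinatorial Gauss--Bonnet'' is not the actual mechanism --- the quantitative step is dual, pairing a putative filling $\sigma$ against Lipschitz test $1$-cochains supported across a fat bigon, with the bound $M$ controlling how much a single $2$-cell can move the pairing, so that $\|\sigma\|_1 \leq K\|c\|_1$ forces fellow-travelling. Since both of these steps are flagged rather than executed, what you have is a correct roadmap, not a complete proof.

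Your ``one external input'' --- a uniform bound on the $\ell_1$-areas of fillings of uniformly short cycles --- deserves emphasis, because it is necessary, not cosmetic: detached from the standing conventions of \cite{GrMa08}, part (1) as literally quoted is false for a bare simply connected $2$-complex. Take $X^{(1)}$ to be a ray with, attached at each integer $n$, a ``telescope'': triangles $t_n = \gamma_0, \gamma_1, \dots, \gamma_{L_n}$ of length $3$, consecutive ones joined by three squares, the last capped by a $2$-cell. Then $X$ is simply connected, every attaching map has length at most $4$, and $X^{(1)}$ is quasi-isometric, with constants independent of the $L_n$, to a tree, hence hyperbolic; but each telescope is a disk, so the boundary map is injective on its $2$-chains (and cells of other telescopes, having disjoint edge supports, cannot contribute), whence the unique $\sigma$ with $\partial\sigma = t_n$ has $\|\sigma\|_1 = 3L_n + 1$ against $\|t_n\|_1 = 3$; no linear constant exists once $L_n \to \infty$. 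In \cite{GrMa08} the uniformity is supplied by the ambient assumptions on the complexes considered there, and in every application in the present article it follows from cocompactness: for a topological model of type $\F_2$, local finiteness and finitely many $G$-orbits of cells leave only finitely many orbits of short loops, hence a uniform filling constant. With that input promoted to an explicit hypothesis, your part (1) goes through as written.
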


\begin{proof}[Proof of Theorem~\ref{thm:characterization}] Let $G$ be a compactly generated TDLC-group. Suppose that $G$ is hyperbolic. By Proposition~\ref{prop:F2model}, $G$ is compactly presented and there is a topological model $X$ of $G$ of type $\textup{F}_2$. By Proposition~\ref{prop:svarc}, the $1$-dimensional complex $X^{(1)}$ is quasi-isometric to a Cayley-Abels graph of $G$. It follows that $X^{(1)}$ is hyperbolic. Hence, Propositions~\ref{prop:hom=iso} and~\ref{thm:manning1} imply that $G$ satisfies the weak linear isoperimetric inequality. 
 
Conversely, suppose that $G$ is compactly presented and satisfies the weak linear isoperimetric inequality. 
Proposition~\ref{prop:F2model0} implies that there is a topological model $X$ of $G$ of type $\textup{F}_2$.
By Proposition~\ref{prop:hom=iso}, $X$ satisfies the linear homological isoperimetric inequality.
Since the $G$-action on the $2$-skeleton $X^{(2)}$ has finitely many $G$-orbits of $2$-cells, there is a constant $M$ such that the attaching map for each $2$-cell in $X$ has length at most $M$. 
Then Proposition~\ref{thm:manning2} implies that $X^{(1)}$ is hyperbolic. By Proposition~\ref{prop:svarc}, the Cayley-Abels graphs of $G$ are hyperbolic.
\end{proof}

\section{Small cancellation quotients of amalgamated products of profinite groups}  \label{sec:SmallCancellation}

This section relies on small cancellation theory over free products with amalgamation as developed in Lyndon-Schupp textbook~\cite[Chapter V, Section 11]{LySc01}. Before we state the main result of the section, we recall some of the terminology.

Let $A$ and $B$ be groups, and let $C$ be a common subgroup. A {\em reduced word}\footnote{In~\cite[Section V Chapter 11]{LySc01} this is also called normal form.} is a sequence  $x_1 \ldots x_n,n\geq0$, of elements of $A\ast_C B$ such that
\begin{enumerate}
	\item Each $x_i$ belongs to one of the factors $A$ or $B$.
	\item Successive $x_i,x_{i+1}$ belong to different factors.
	\item If $n>1$, no $x_i$ belongs to $C$.
	\item If $n=1$, then $x_1\neq 1$.
\end{enumerate}
A sequence $x_1 \ldots x_n$ is {\em semi-reduced} if it satisfies all the above items with $(2)$ replaced by 
\begin{itemize}
\item[(2')] The product of successive $x_i,x_{i+1}$ does not belong to $C$. 
\end{itemize}
Every element of $A\ast_C B$  can be represented as the product of the elements in a reduced word. Moreover, if $x_1\ldots x_n, n\geq 1$ is a reduced word, then the product $x_1 \cdots x_n$ is not trivial in $A\ast_CB$ (see~\cite[Theorem~2.6]{LySc01}). A reduced word $w = x_1\ldots x_n$ is {\em cyclically reduced} if $n=1$ or if $x_n$ and $x_1$ are in different factors of $A\ast_C B$. The word $w$ is {\em weakly cyclically reduced} if $n=1$ or if $x_nx_1\notin C$. 
\par A subset $R$ of words in $A\ast_C B$ is {\em symmetrized} if $r\in R$ implies $r$ is weakly cyclically reduced and every weakly cyclically reduced conjugate of $r^{\pm 1}$ is also in $R$. A symmetrized subset $R$ is {\em finite} if all elements represented by words in $R$ belong to a finite number of conjugacy classes in $A \ast_CB$. Let $R$ be a symmetrized subset of $A\ast_C B$. A word b is said to be a {\em piece} (relative to $R$) if there exist distinct elements $r_1$ and $r_2$ of $R$ such that $r_1=bc_1$ and $r_2 = bc_2$ in semi-reduced form.
\begin{itemize}
	\item[$C'(\lambda)$:] If $r\in R$ has semi-reduced form $r = bc$ where $b$ is a piece, then $|b| < \lambda |r|$. Further, $|r| > 1/\lambda$ for all $r\in R$.
\end{itemize}

\begin{thm}
Let $A \ast_C B$ be the amalgamated free product of the profinite groups $A,B$ over a common open subgroup $C$. Let $R$ be a finite symmetrized subset of $A\ast_C B$ that satisfies the $C'(1/12)$ small cancellation condition. Then the quotient $G= (A \ast_C B)/\nclose{R}$ is a hyperbolic TDLC-group with  $\ccd_\Q (G) \leq 2$.
 \end{thm}
 
Since the rational discrete cohomological dimension of a TDLC-group $G$ is less or equal to the geometric dimension of a contractible $G$-CW-complex acted on by $G$ with compact open stabilizers (see \cite[Fact~2.7]{CCC} for example), in order to prove the theorem, we construct a contractible cellular $2$-dimensional $G$-complex $X$ with compact open cell stabilizers, and such that its $1$-skeleton is hyperbolic. The $1$-skeleton is obtained as a quotient of the Bass-Serre tree of $A \ast_CB$, and then $X$ is obtained by pasting $G$-orbits of $2$-cells in one to one correspondence with conjugacy classes defined by $R$.

Recall that there exists a unique group topology on $A\ast_C B$ with the following properties (see~\cite[Proposition~8.B.9]{CoHa16} for example): the natural homomorphisms $A\to A\ast_C B$, $B\to A\ast_C B$, and $C\to A\ast_C B$  are topological isomorphisms onto open subgroups of $A\ast_C B$. Moreover, $A\ast_C B$ is a TDLC-group, and in particular, all open subgroups of $C$ form a local basis at the identity of compact open subgroups of $A\ast_C B$. 

\begin{proof}

Let $T$ be the Bass-Serre tree of $A\ast_C B$. Observe that $T$ is locally finite, the action of $A\ast_C B$  on $T$ is cobounded and it has compact open stabilizers. In particular, the action of $A\ast_C B$ on $T$ is geometric and hence $A\ast_C B$ is a hyperbolic TDLC-group. 

Let $N$ denote the normal closure of $R$ in $A\ast_CB$. By Greendlinger's lemma~\cite[Ch. V. Theorem 11.2]{LySc01}, the natural morphisms $A\to G$ and $B \to G$ are monomorphisms. It follows that the subgroup $N$ does not intersect $A$, $B$ and $C$. Thus the action of $N$ on the tree $T$ is free, and as a consequence $N$ is discrete. Hence $N$ is closed in $A\ast_C B$, and $G$ is a TDLC-group.

Let $X^{(1)}$ denote the quotient graph $T/N$. Since $N$ was acting freely and cellularly on $T$, the quotient map $\rho\colon T \to X^{(1)}$ is a covering map. Since $T$ is locally finite, $X^{(1)}$ is locally finite. Since the quotient map $A \ast_CB \to G$ is open, the action of $A\ast_C B$ on $T$ induces an action of $G$ on $X^{(1)}$ which is cobounded and has compact open stabilizers.
 
 Let $x_0$ be a fixed vertex of $T$ that we consider as the base point from now on. Since $T$ is simply connected, there is a natural isomorphism from $N$ to the fundamental group $\pi_1(X^{(1)}, \rho(x_0))$. Specifically, for each $g\in N$, let $\alpha_g$ be the unique path in $T$ from $x_0$ to $g.x_0$. Let $\gamma_g=\rho\circ\alpha_g$ be the closed path in $X^{(1)}$ induced by $\alpha_g$ based at $\rho(x_0)$. Thus, the isomorphism from $N$ to  $\pi_1(X^{(1)}, \rho(x_0))$ is defined by $g\mapsto \gamma_g$.

We are ready to define $X$. For $g\in G$ and $h\in N$,  let $g.\gamma_h$ be the translated closed path without an initial point, i.e., these are cellular maps from $S^1 \to X$. Consider the $G$-set $\Omega=\{g.\gamma_{r}\mid r\in R, g\in G\}$ of closed paths in $X^{(1)}$. Let $X$ be the $G$-complex obtained by attaching a $2$-cell to $X^{(1)}$ for every closed path in $\Omega$. In particular, the pointwise $G$-stabilizer  of a $2$-cell of $X$ coincides with the pointwise $G$-stabilizer of its boundary path and, therefore, compact and open. Then $X$ is a discrete $G$-complex of dimension $2$. Observe that the natural isomorphism from $N$ to  $\pi_1(X^{(1)}, \rho(x_0))$ implies that $X$ is simply connected. Moreover, since $R$ is finite, $X$ is a cobounded $2$-dimensional discrete $G$-complex.

We observe that $X$ is a $C'(1/6)$ complex and in particular the one-skeleton of $X$ is a Gromov hyperbolic graph with respect to the path metric, this is a well known consequence, see~\cite{MR1074477}. Let $R_1$ and $R_2$ be a pair of distinct 2-cells in $X$ such that the intersection of their boundaries contains an embedded path $\gamma$. We can assume, by translating by an element of $G$, that the base point of $X$ is either the initial vertex of $\gamma$ or the second vertex of $\gamma$. Let $\gamma'$ be the sub-path of $\gamma$ with initial point the base point of $X$; observe that $|\gamma| \leq |\gamma'| +1 $. Consider the boundary paths of $R_1$ and $R_2$ starting at the base point and oriented such that $\gamma'$ is an initial sub-path of both of them.
Consider the lifts of the chosen boundary paths of $R_1$ and $R_2$ in the tree $T$ starting from the base point. They intersect along the lifting of the path $\gamma'$.  Let us call this path $\hat{\gamma}$. Then the reduced word in $A \ast_C B$ corresponding to $\hat{\gamma}$ is a piece, and hence its length is bounded by $\frac{1}{12} |\partial R_i|$, for $i = 1,2$. We have the following inequality $$|\gamma| \leq |\gamma'| + 1 = |\hat{\gamma}| + 1 \leq \frac{1}{12}|\partial R_i| + \frac{1}{12}|\partial R_i|, \text{ for } i=1,2.$$ Hence $X$ is a $C'(1/6)$ complex.

 We conclude that the complex $X$ is contractible by using a well known argument of Ol\cprime shanski\u{\i}~\cite{MR1191619}. By a remark of Gersten~\cite[Remark 3.2]{MR919828}, if every spherical diagram in $X$ is reducible, then $X$ has trivial second homotopy group, and therefore $X$ is contractible because it is simply connected. Let $S \to X$ be a spherical diagram, and suppose that it is not reducible. Consider the dual graph $\Phi$ to the cellular structure of $S$, specifically $\Phi$ is the graph whose vertices are the two cells of $X$ and there is an edge between two vertices for each connected component of the intersection of the boundaries of the corresponding $2$-cells. Observe that $\Phi$ is planar. Since $X$ is a $C'(1/6)$ complex, the boundary paths of $2$-cells  are embedded paths and the intersection of the boundaries of any pair of $2$-cells is connected, and hence $\Phi$ is simplicial. Also since $X$ is $C'(1/6)$, every vertex of $\Phi$ has degree at least $6$. Since a finite planar simplicial graph has at least one vertex of degree  at most $5$, we have reached a contradiction and therefore the diagram $S \to X$ has to be reducible. The above sketched argument can be found in~\cite[Proof of Theorem 6.3]{Ma17} in a different framework.
 \end{proof}

\bibliographystyle{plain}

\end{document}